\documentclass[12pt]{amsart}
\usepackage[margin=1.7cm]{geometry}

\RequirePackage{amsthm,amscd,comment,mathtools}

\usepackage{kantlipsum}
\usepackage[all]{xy}
\usepackage{mathabx}
\usepackage{mathrsfs}
\usepackage{amsmath}
\usepackage{amsfonts}
\usepackage{amssymb}
\usepackage{graphicx}
\usepackage{tikz-cd}
\usepackage[colorinlistoftodos]{todonotes}
\usepackage[pagebackref,breaklinks,colorlinks,linkcolor=red,anchorcolor=red,citecolor=blue]{hyperref}

\usepackage{enumitem}

\usepackage{verbatim}
\usepackage{rotating}

\calclayout

\theoremstyle{definition}

\newtheorem{definition}{Definition}[section]
\newtheorem{remark}[definition]{Remark}

\newtheorem{question}{Question}

\theoremstyle{plain}

\newtheorem{theorem}[definition]{Theorem}
\newtheorem{proposition}[definition]{Proposition}

\newtheorem{lemma}[definition]{Lemma}

\newtheorem{convention}[definition]{Convention}

\newtheorem{example}[definition]{Example}
\numberwithin{equation}{subsection}

\newcommand{\id}{\mathrm{id}}

\newcommand{\Hom}{\operatorname{Hom}}
\newcommand{\HHom}{{\underline{\operatorname{Hom}}}}
\newcommand{\Ext}{\operatorname{Ext}}
\newcommand{\End}{\operatorname{End}}
\newcommand{\EEnd}{{\underline{\operatorname{End}}}}

\newcommand{\Spec}{\operatorname{Spec}}
\newcommand{\Spf}{\operatorname{Spf}}
\newcommand{\Spa}{\operatorname{Spa}}
\renewcommand{\ker}{\operatorname{ker}}

\newcommand{\im}{\operatorname{im}}

\newcommand{\tr}{\operatorname{tr}}
\newcommand{\rk}{\operatorname{rk}}
\newcommand{\gr}{\operatorname{gr}}

\newcommand{\frakm}{{\mathfrak m}}

\newcommand{\frakS}{{\mathfrak S}}

\newcommand{\frakU}{{\mathfrak U}}

\newcommand{\frakX}{{\mathfrak X}}

\newcommand{\bC}{{\mathbb C}}
\newcommand{\bD}{{\mathbb D}}

\newcommand{\bL}{{\mathbb L}}

\newcommand{\bQ}{{\mathbb Q}}

\newcommand{\bZ}{{\mathbb Z}}

\newcommand{\bfC}{{\mathbf C}}

\newcommand{\calE}{{\mathcal E}}
\newcommand{\calF}{{\mathcal F}}

\newcommand{\calH}{{\mathcal H}}
\newcommand{\calI}{{\mathcal I}}

\newcommand{\calL}{{\mathcal L}}

\newcommand{\calO}{{\mathcal O}}

\newcommand{\calQ}{{\mathcal Q}}

\newcommand{\calV}{{\mathcal V}}

\newcommand{\calX}{{\mathcal X}}
\newcommand{\calY}{{\mathcal Y}}

\newcommand{\rH}{{\mathrm H}}

\newcommand{\Qp}{{\bQ_p}}

\newcommand{\Cp}{{\bC_p}}

\newcommand{\Exp}{{\mathrm{Exp}}}           

\newcommand{\HIG}{{\mathrm{HIG}}}           

\newcommand{\Vect}{{\mathrm{Vect}}}         

\newcommand{\et}{{\mathrm{\acute{e}t}}}    

\newcommand{\proet}{{\mathrm{pro\acute{e}t}}}
\newcommand{\tor}{{\mathrm{tor}}}           

\makeatletter

\begin{document}

\title{Lift-independence problem in the $p$-adic Simpson correspondence for curves}

\author{Xiangyu Pan}
\address{School of Mathematical Sciences, Peking University, Beijing, China}
\email{XiangyuPan@stu.pku.edu.cn}

\author{Jiahong Yu}
\address{Academy of Mathematics and Systems Science, Chinese Academy of Sciences, Beijing, China;  
Morningside Center of Mathematics, Chinese Academy of Sciences, Beijing, China}
\email{\nolinkurl{yu_jh@amss.ac.cn}}

\date{\today}

\begin{abstract}
Let $X$ be a proper smooth rigid analytic variety over a complete
algebraically closed $p$-adic field $\mathbf C$. Fix a continuation
$\operatorname{Exp}$ of $\exp$. Faltings, in the curve case, and Heuer
showed that any lifting $\widetilde X$ of $X$ over
$B_{\mathrm{dR}}^+/t^2$ induces an equivalence between the category of
Higgs bundles on $X_{\text{\'et}}$ and the category of $v$-vector bundles
on $X_v$. Changing $\widetilde X$ is known to alter the correspondence
nontrivially. This raises the natural question of how the isomorphism class
of the Higgs bundle associated with a fixed $v$-vector bundle varies with
$\widetilde X$, a question that has not been systematically studied.

In this paper, we address this question when $X$ is a curve of genus
$g\ge 2$. More precisely,
we call a Higgs bundle lift-independent if it corresponds to the same
$v$-bundle under the $p$-adic Simpson correspondence for every lifting
$\widetilde X$. We prove the following results.
\begin{itemize}
\item Every lift-independent Higgs bundle of rank $r\le g$ is nilpotent.
\item Every semistable lift-independent Higgs bundle of
rank $r\le \sqrt{g}+1$ has zero Higgs field.
\item In contrast, vanishing fails even within the class singled out by
Faltings' conjecture: there always exists a lift-independent semistable Higgs
bundle of degree $0$ with nonzero Higgs field.
\end{itemize}
\end{abstract}

\keywords{$p$-adic Simpson correspondence; lift-independence;
Higgs bundles; $v$-vector bundles; $p$-adic non-abelian Hodge theory}
\subjclass[2020]{14G22, 14F30, 14H60}
\maketitle
\tableofcontents

\section{Introduction}

Fix a complete algebraically closed field ${\mathbf{C}}$ over $\mathbb{Q}_p$ with the ring of integers $\mathcal{O}_{\bfC}$. Put $A_{\text{inf}} = \mathbb{A}_{\text{inf}}(\mathcal{O}_{\mathbf{C}})$ and $B_{\text{dR}}^+ = \mathbb{B}_{\text{dR}}^+({\mathbf{C}})$. Fix a generator $\xi$ of $\ker(\theta: A_{\text{inf}} \to {\mathcal{O}_\mathbf{C}})$ and put 
\[A_2:=A_{\text{inf}}/\xi^2 \text{ and } B_2:=B_{\text{dR}}^+/\xi^2.\]
Denote by $\mathrm{Exp}:\bfC\to 1+\frakm_{\bfC}$ a section of the logarithmic map $\log:1+\frakm_{\bfC}\to\bfC$ extending the usual $p$-adic exponential map $\exp$. Then we have the following $p$-adic Simpson correspondence.
\begin{theorem}[Faltings \cite{Faltings_2005}, Heuer \cite{heuer2023padic}]\label{theorem:Faltings-Heuer}
	For a proper smooth rigid analytic variety $X$ over $\bfC$, any flat lifting $\widetilde X$ of $X$ over $B_2$ together with a choice of $\Exp$ induces an equivalence
	\[S_{\Exp,\widetilde X}:\HIG(X_\et)\to \Vect(X_v)\]
	from the category of Higgs bundles on $X_\et$ to the category of $v$-bundles on $X_v$ that is compatible with ranks, tensor products, duals and cohomologies.
\end{theorem}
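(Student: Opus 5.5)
This is a cited theorem, so rather than reprove it from scratch I would reconstruct Heuer's argument, which recovers Faltings' curve case. The plan runs through the diamantine picture. Let $\nu:X_v\to X_\et$ be the natural morphism of sites. The first step is to compute $R^1\nu_*\calO^\times$. Using the logarithm and the Hodge--Tate sequence one gets a short exact sequence
\[0\to \Omega^1_X(-1)\to R^1\nu_*\calO^\times \to \ldots,\]
and more precisely $R^1\nu_*\mathrm{GL}_r$ has a "Lie algebra" part controlled by $\Omega^1_X(-1)\otimes\End$. The choice of $\Exp$ splits the exponential on the relevant small opens; this is what turns additive Higgs data into multiplicative cocycles.

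The second step uses the lifting $\widetilde X$ over $B_2$ to build a canonical torsor. Deformations of $X$ to $B_2$ give a class in $\Ext^1$ that, via the Hodge--Tate sequence $0\to\calO\to \mathcal{E}\to \Omega^1_X(-1)\to 0$, produces the Faltings extension $\mathcal{E}_{\widetilde X}$ on $X_v$. From it I would form the period sheaf $\calO\bC_{\widetilde X}=\varinjlim \mathrm{Sym}^n\mathcal{E}_{\widetilde X}$. This sheaf carries a canonical Higgs field $\Theta:\calO\bC\to \calO\bC\otimes\Omega^1(-1)$, and the sequence
\[0\to\calO\to\calO\bC\xrightarrow{\Theta}\calO\bC\otimes\Omega^1(-1)\to\cdots\]
is exact on $X_v$ (a $v$-local Poincaré lemma). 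Equivalently, $\widetilde X$ gives a $v$-torsor under $T_X(1)$ over $X_\et$. Locally it is trivial, and its sections are the "logarithm coordinates".

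The third step defines the functors. For a Higgs bundle $(E,\theta)$, set
\[S(E,\theta)=\big(\nu^*E\otimes\calO\bC\big)^{\theta\otimes1+1\otimes\Theta=0},\]
a priori only for small $\theta$. For arbitrary $\theta$ I would instead twist by $\Exp$ applied to the torsor. Locally, on a toric chart with trivial torsor, $S(E,\theta)$ is $E$ with the $v$-cocycle $\exp(\theta\cdot c)$, where $c$ is the $T_X(1)$-valued torsor cocycle. The $\Exp$ makes sense for all $\theta$ because of the chosen continuation, and $\Exp$ being a homomorphism handles commuting Higgs fields. Compatibility with tensors and duals is formal from this cocycle description. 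Cohomology compatibility follows from the Poincaré lemma and Leray for $\nu$. Full faithfulness reduces, via internal Hom, to $H^0$-compatibility.

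The main obstacle is essential surjectivity: every $v$-bundle must arise this way. Here I would follow Heuer. First, étale-locally, any $v$-bundle becomes trivial up to a cocycle with values in $\exp$ of the Lie algebra. This uses the computation $R^1\nu_*\mathrm{GL}_r=$ "Higgs fields" from a geometric Sen theory, or the moduli comparison in Heuer's work. Second, these local descriptions glue, because the torsor from $\widetilde X$ trivializes the obstruction coherently. Controlling the étale-local structure of $R^1\nu_*\mathrm{GL}_r$ for non-small data, where only $\Exp$ rather than $\exp$ is available, is the delicate part. I would handle it by reducing to the abelian case via characteristic polynomials and spectral covers.
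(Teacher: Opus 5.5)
The paper does not prove this theorem. It is quoted from Faltings (for curves) and Heuer and used as a black box, so your sketch has to stand on its own. It has a genuine gap exactly where one must go beyond small Higgs fields.

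You write that on a toric chart $S(E,\theta)$ is $E$ with the cocycle $\exp(\theta\cdot c)$, and that ``$\Exp$ makes sense for all $\theta$ because of the chosen continuation''. But $\Exp\colon\bfC\to1+\frakm_{\bfC}$ is only a group-theoretic section of $\log$ on the constant field $\bfC$. It is not a power series, and it induces no map on sections of $\widehat{\calO}$, let alone on $\EEnd(E)\otimes\widehat{\calO}$. In fact no morphism of sheaves $\widehat{\calO}\to\widehat{\calO}^{\times}$ extends $\exp$: it would be an entire function agreeing with $\exp(T)$ near $0$, whereas $\exp(T)$ has radius of convergence $p^{-1/(p-1)}$.

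On the chart, the cocycle is, up to normalization, $\gamma\mapsto\exp\bigl(\sum_i c_i(\gamma)\Theta_i\bigr)$, where the $\Theta_i$ are matrices of analytic functions. This makes sense only if the series converges ($\theta$ small) or terminates ($\theta$ nilpotent). Additivity of $\Exp$ and commutativity of the $\Theta_i$ do not help.

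Your gluing step runs into the same obstruction. Changing the local trivialization of the torsor by a section $\delta$ of $T_X(1)$ changes the local cocycle by $\exp(\theta(\delta))$, which is the twist in Lemma~\ref{lem:action-of-lift} and Remark~\ref{rmk:rat-transition}. That is why the paper states the rational transition formula only for nilpotent Higgs bundles, and uses Hitchin-smallness for convergence in the integral case. You also place the difficulty in the wrong spot. Étale-locally no $\Exp$ is needed: adjoining $T_i^{1/p^n}$ multiplies the Higgs components by $p^n$ in the new coordinates, so objects become small étale-locally. The difficulty is global.

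What is missing is a global mechanism in which $\Exp$ is only ever applied to scalars. This is where properness enters, and nothing in your argument uses that $X$ is proper. In rank one, for instance, a form $\omega\in\rH^0(X,\Omega^1_X(-1))$ lifts, canonically via $\widetilde X$, to $\rH^1_v(X,\calO)\cong\Hom_{\mathrm{cts}}(\pi_1^{\et}(X),\bfC)$ (primitive comparison, for connected proper $X$). Composing with $\Exp$ gives a continuous character $\pi_1^{\et}(X)\to1+\frakm_{\bfC}$, hence a $v$-line bundle with Hodge--Tate logarithm $\omega$.

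In higher rank, roughly, the non-small part of $\theta$ is controlled through its spectral data (spectral varieties and the Hitchin morphism). One twists by rank-one objects of this kind to reduce to the small case, where a period-sheaf construction like yours is legitimate. So the spectral covers you mention only at the end are needed already to define $S_{\Exp,\widetilde X}$ on non-small objects, not merely for essential surjectivity. The compatibility and full-faithfulness claims then have to be checked through this twisting.

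A smaller point: with the polynomial sheaf $\varinjlim\mathrm{Sym}^n\mathcal{E}$, the flat sections $\exp\bigl(-\sum_i\Theta_iY_i\bigr)v$ are polynomial in the $Y_i$ only when $\theta$ is nilpotent. So even the small case needs a completed period sheaf such as $\calO\widehat{\mathbb{C}}^{+}_{\mathrm{pd}}$ of Definition~\ref{def:period-ring-OCpd}.
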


	We emphasize that the equivalence $S_{\Exp,\widetilde X}$ depends on both $\Exp$ and $\widetilde X$, so as long as we fix an $\Exp$, it only depends on the choice of lifting $\widetilde X$ of $X$. A natural question is how the equivalence $\HIG(X_\et)\to \Vect(X_v)$ depends on such a lifting $\widetilde X$. From now on, we fix an $\Exp$ and denote by
	\[S_{\widetilde X}:=S_{\Exp,\widetilde X}:\HIG(X_\et)\xrightarrow{\simeq} \Vect(X_v)\]
	the above equivalence of categories.
	
	Note that for any two liftings $\widetilde X_1$ and $\widetilde X_2$ of $X$ and any Higgs bundle $(\calH,\theta)$ on $X_\et$, $S_{\widetilde X_1}^{-1}(S_{\widetilde X_2}(\calH,\theta))$ is another Higgs bundle on $X_\et$. A Higgs bundle $(\calH,\theta)\in \HIG(X_\et)$ is called \emph{lift-independent} if for any liftings $\widetilde X_1$ and $\widetilde X_2$, there always exists an isomorphism
	\[(\calH,\theta)\cong S_{\widetilde X_1}^{-1}(S_{\widetilde X_2}(\calH,\theta)).\]
	The following question is natural: What does a lift-independent Higgs bundle look like?
	
	A trivial observation is that any Higgs bundle $(\calH,\theta)$ with trivial Higgs field (i.e., $\theta = 0$) is lift-independent. So one may ask if this is the only case for lift-independent Higgs bundles. That is, we have the following question:
	\begin{question}\label{ques: indep higgs to 0?}
		Let $(\calH,\theta)$ be a lift-independent Higgs bundle on $X_\et$. Is it always true that $\theta = 0$?
	\end{question}
	The paper is devoted to answering this question when $X$ is a proper smooth curve and on the integral level.
	
\subsection{Main results}	
  Suppose that $X$ is a smooth rigid analytic variety with a smooth formal model $\frakX$ over $\calO_{\bfC}$. Then we have the following result on integral $p$-adic Simpson correspondence.
  \begin{theorem}[\cite{abbes2016padic,MinWang2024,anschutz2023smallpadicsimpsoncorrespondence,sheng2024smallpadic}]
  	Any flat lifting $\widetilde \frakX$ of $\frakX$ over $A_2$ induces an equivalence 
  	\[\frakS_{\widetilde \frakX}:\Vect^{\rH\text{-}{\rm small}}(X_v,\widehat \calO_X^+)\to \HIG^{\rH\text{-}{\rm small}}(\frakX_{\et})\]
  	from the category of Hitchin-small $\widehat \calO_X^+$-bundles on $X_v$ to the category of Hitchin-small Higgs bundles on $\frakX_\et$ that is compatible with ranks, tensor products and duals.
  \end{theorem}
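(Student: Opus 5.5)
The plan is to construct $\frakS_{\widetilde\frakX}$ by a local-to-global argument. Locally we use the small correspondence over toric charts. Globally we glue via a period sheaf built from $\widetilde\frakX$: an integral Higgs–Tate algebra with its Higgs field.

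\textbf{Local step.} First I would cover $\frakX$ by small affines $\frakU=\Spf R$ with étale maps to a torus $\widehat{\mathbb G}_m^d$. Each chart gives a perfectoid cover $\frakU_\infty\to\frakU$ with Galois group $\Gamma\cong\bZ_p(1)^d$.

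A Hitchin-small $\widehat\calO_X^+$-bundle restricted to $\frakU$ is then a continuous $\Gamma$-representation on a finite projective $\widehat R_\infty^+$-module. By decompletion (Faltings' almost purity plus the vanishing of the non-integral-weight part of $\Gamma$-cohomology), the smallness hypothesis lets us descend it to a small representation on $R$. The Higgs field is then the logarithm of the $\Gamma$-action, $\theta=\sum_i\log(\gamma_i)\otimes\frac{d\log T_i}{\xi}$. The operator $\log\gamma_i$ converges exactly because of the Hitchin-smallness (spectral-radius) condition, and it is nilpotent modulo the appropriate power of $p$.

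\textbf{Globalization via the lifting.} Here the lifting is essential. The obstruction to gluing the local chart-dependent equivalences is a class in $\rH^1(\frakX_\et,T_{\frakX}\otimes\xi^{-1}\ldots)$. I would replace charts by the integral Higgs–Tate algebra $\calO\bC^+_{\widetilde\frakX}$: the sheaf of functions on the torsor of local liftings of $\widetilde\frakX$ to $A_2$-points, suitably $p$-adically completed and rescaled. It carries a canonical Higgs field $\Theta$ and satisfies a Poincaré lemma: $\widehat\calO_X^+\to(\calO\bC^+\otimes\Omega^\bullet,\Theta)$ is an almost quasi-isomorphism in the small range.

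Define $\frakS_{\widetilde\frakX}(\calL)=\nu_*(\calL\otimes\calO\bC^+_{\widetilde\frakX})$ with its induced Higgs field. The quasi-inverse is $(\calH,\theta)\mapsto(\calH\otimes\calO\bC^+)^{\theta\otimes1+1\otimes\Theta=0}$. On each chart these agree with the local construction, so the unit and counit maps are isomorphisms étale-locally, hence globally. Compatibility with ranks, tensor products and duals follows because $\calO\bC^+$ is a ring with multiplicative $\Theta$.

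\textbf{Main obstacle.} The hard step is the integral, as opposed to almost, control. We need to show $\nu_*$ of the period-sheaf tensor is a genuine vector bundle with $R^i\nu_*=0$ for $i>0$. This requires the exact Hitchin-small radius, e.g. a bound like $p^{-1/(p-1)}$-nilpotence, so that the exponential series converge integrally and the almost-zero torsion cancels. I would first try to prove it on a chart by explicit $\Gamma$-cohomology computations of $R_\infty^+\otimes\calO\bC^+$ weight by weight, then use faithfully flat descent along $\frakX_\et$.
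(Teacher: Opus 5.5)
The theorem is only quoted in the paper. Your global architecture matches the Min--Wang construction recalled in \S\ref{section: Quasi-lift ind} (Theorem~\ref{thm:main-thm-of-MinWang}): a period sheaf attached to $\widetilde\frakX$, the functor $\calL\mapsto\nu_*(\calL\otimes(\text{period sheaf}))$, an inverse via horizontal sections of $\theta\otimes1+1\otimes\nabla$, and a check on toric charts. The gap is your choice of period sheaf.

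\textbf{The symmetric algebra fails integrally.} You take the completed, rescaled \emph{symmetric} algebra (polynomial functions on the torsor of liftings) and assert that its Poincar\'e lemma is an almost quasi-isomorphism. Integrally this is false, and the defect is not almost zero. On $U_\infty$, put $Z=\rho Y$. Then the Higgs field is $\partial/\partial Z$ on $\widehat\calO^+[Z]^\wedge$ in the integral basis $\rho\,\mathrm{d}\log T/t$ (compare Lemma~\ref{lem:local-description-of-OCpd}). The coefficient of $Z^{p^k-1}$ in any $\partial_Z f$ lies in $p^k\widehat\calO^+$. Hence the cokernel contains $\widehat\calO^+/p^k$ for every $k$, which is neither almost zero nor of bounded exponent. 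No rescaling of $Z$ changes this.

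The same factorials break the equivalence at the $\frac{1}{p-1}$ radius the paper needs. Write $\theta=\sum_i\Theta_i'\otimes\rho\,\frac{\mathrm{d}\log T_i}{t}$. Then $\frac{1}{p-1}$-Hitchin-smallness says exactly that the $\Theta_i'$ are topologically nilpotent, and for $d=1$ the horizontal sections are $\sum_n(-\Theta')^nZ^nm_0/n!$. Take $d=1$ and $\Theta'=J$ a nilpotent Jordan block of size $p+1$, which is certainly $\frac1{p-1}$-Hitchin-small. The term $J^pZ^pm_0/p!$ is integral only for $m_0$ in a proper sublattice. Consequently the map from the base change of the horizontal sections to $\calH\otimes(\text{period sheaf})$ is not surjective: the top basis vector of the Jordan block is not in its image. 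So the weight-by-weight computation you propose would expose the failure rather than repair it. The obstruction is the denominators $n!$, not almost-zero torsion.

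\textbf{The missing idea is divided powers.} Min--Wang's $\calO\widehat{\mathbb C}^+_{\mathrm{pd}}$ (Definition~\ref{def:period-ring-OCpd}) is the $p$-completed free pd-algebra on the integral Faltings extension $\calE^+$ of Definition~\ref{def:int-Faltings-ext}, which is built from the cotangent complex of $\widehat\calO^+_X$ over $\calO_{\widetilde\frakX}$. Locally it is $\widehat\calO^+[\rho Y_1,\dots,\rho Y_d]^\wedge_{\mathrm{pd}}$, and there $\partial_Z Z^{[n+1]}=Z^{[n]}$. So the Poincar\'e lemma (Lemma~\ref{lem:Poincare-lem-for-OCpd}) is exact on the nose. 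Moreover $\exp(\sum_i\Theta_i'Z_i)=\sum_{n\in\mathbb N^d}(\Theta')^nZ^{[n]}$ converges exactly when the $\Theta_i'$ are topologically nilpotent, i.e.\ precisely in the $\frac1{p-1}$-Hitchin-small range. This is the formula $H=\exp(\sum_i\Theta_iY_i)(V)$ used in the proof of Lemma~\ref{lem:action-of-lift}.

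Two smaller points. First, integrally the non-integral-weight part of the $\Gamma$-cohomology of $\widehat R^+_\infty$ does not vanish; it is only killed by $\zeta_p-1=\rho$. This torsion is what the $\rho$-normalisation of the smallness condition has to absorb in the decompletion step. Second, you neither need nor can expect $R^i\nu_*=0$ integrally. Already for $\calO\widehat{\mathbb C}^+_{\mathrm{pd}}$ these higher direct images are nonzero and only killed by a bounded power of $\rho$. The equivalence uses only $\nu_*$ together with the local shape of the horizontal sections.
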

  Similarly, when $\frakX$ admits a flat $A_{2}$-lifting, a $\frac{1}{p-1}$-Hitchin-small Higgs bundle $(\calH,\theta)$ on $\frakX_\et$ is called \emph{integrally lift-independent} if for any two liftings $\widetilde \frakX_1$ and $\widetilde \frakX_2$ of $\frakX$, there exists an isomorphism
  \[(\calH,\theta)\cong \frakS_{\widetilde \frakX_1}(\frakS_{\widetilde \frakX_2}^{-1}(\calH,\theta)).\]
  So Question \ref{ques: indep higgs to 0?} also makes sense on the integral level: 
  \begin{question}\label{ques: indep higgs to 0? integral}
  	 Does an integrally lift-independent Higgs bundle always have trivial Higgs field? 
  \end{question}
  We will try to attack Question \ref{ques: indep higgs to 0? integral} when $\frakX$ is a smooth curve in this paper. Here is our main result.
 
 \begin{theorem}[Theorems \ref{theo:rank_le_genus_nilpotent} and \ref{theo: rk leq sqrt g-1 ss+int independ to 0}]\label{theo:integral_rigidity}
 	Let $\mathfrak{X}$ be a projective smooth formal curve over $\mathcal{O}_{\mathbf{C}}$ of genus $g \geqslant 2$ with generic fiber $X$. Let $(\mathcal{H}, \theta)$ be a $\frac{1}{p-1}$-Hitchin-small Higgs bundle on $\mathfrak{X}$ of rank $r\geqslant 1$.
    \begin{enumerate}
        \item If $r\leqslant g$ and $(\calH,\theta)$ is integrally lift-independent, then $\theta^r=0$; in particular, $(\calH,\theta)$ is nilpotent.
        \item If the pull-back $(\calH[\frac1p],\theta[\frac{1}{p}])$ to $X_\et$ is semistable and $r\leqslant\sqrt{g}+1$, then $(\calH,\theta)$ is integrally lift-independent if and only if $\theta=0$.
    \end{enumerate}
 \end{theorem}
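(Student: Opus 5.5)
The plan is to first describe explicitly how the correspondence changes with the lifting, and then turn lift-independence into an algebraic condition on $(\calH,\theta)$. Two $A_2$-liftings of $\frakX$ differ by a class $\delta\in\rH^1(\frakX,T_{\frakX})\otimes\xi A_2/\xi^2A_2$; after identifying $\xi A_2/\xi^2\cong\calO_{\bfC}\{1\}$, this is essentially a class $\delta\in\rH^1(\frakX,T_\frakX)$, up to a normalization. The standard computation of the twisted Higgs field in the small correspondence gives
\[\frakS_{\widetilde\frakX_1}\circ\frakS_{\widetilde\frakX_2}^{-1}(\calH,\theta)\cong(\calH_\delta,\theta_\delta).\]
Here $\calH_\delta$ is obtained from $\calH$ by regluing with the cocycle $\exp(\delta\cdot\theta)$. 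Concretely, on a Čech cover $\{U_i\}$ with $\delta$ represented by vector fields $\delta_{ij}$, one has transition automorphisms $\exp(\langle\delta_{ij},\theta\rangle)$, and these converge by Hitchin-smallness. The Higgs field is unchanged, since on a curve $\langle\delta_{ij},\theta\rangle$ commutes with $\theta$. I take this formula as the input from the body of the paper. Integral lift-independence then says that for every $\delta$ in a $p$-adically open lattice of $\rH^1(T_\frakX)$, the twisted bundle satisfies $(\calH_\delta,\theta)\cong(\calH,\theta)$.

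For part (1), I would linearize in $\delta$. Consider the family $\{(\calH_{t\delta},\theta)\}$ over a small disc in $t$. Its derivative at $t=0$ is the class $\delta\cup\theta\in\mathbb H^1$ of the Higgs deformation complex $\EEnd(\calH)\xrightarrow{[\theta,-]}\EEnd(\calH)\otimes\Omega^1$. Because the whole family is isomorphic to a constant family, this class must be zero; the precise argument compares with the orbit map of $\mathrm{Aut}(\calH,\theta)$.

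More robustly, I would use spectral data. The characteristic polynomial of $\theta$ is unchanged by the twist, but the twist changes the spectral sheaf by tensoring with the line bundle $\exp(\delta\cdot\lambda)$ on the spectral curve $\Sigma\subset T^*X$, where $\lambda$ is the tautological form. Isomorphism for all $\delta$ forces the map
\[\rH^1(X,T_X)\to\rH^1(\Sigma,\calO_\Sigma),\qquad \delta\mapsto \delta\cdot\lambda,\]
to land in classes that fix the spectral sheaf.

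I would then argue by contradiction. Suppose $\theta$ is not nilpotent. Then the pairing of $\rH^1(T_X)$ with a nonzero invariant of $\theta$, via Serre duality $\rH^1(T_X)^\vee=\rH^0(\Omega^{\otimes 2})$, must vanish on the traces $\tr(\theta^k)\cdot\omega$. This gives linear conditions forcing $\tr(\theta^k\wedge\cdots)=0$. The bound $r\le g$ enters because the quadratic differentials of the form $\tr(\theta^{k})$ multiplied by holomorphic forms span enough of $\rH^0(\Omega^{\otimes2})$. For this I would use Max Noether's theorem and the surjectivity of multiplication $\rH^0(\Omega)\otimes\rH^0(\Omega)\to\rH^0(\Omega^{2})$ for non-hyperelliptic curves, with separate handling in the hyperelliptic case. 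Combining, one gets that all coefficients of the characteristic polynomial vanish, and hence $\theta^r=0$ by Cayley–Hamilton.

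For part (2), $\theta$ is now nilpotent, and semistability lets me use the Harder–Narasimhan-type filtration $\ker\theta^i$. Its graded pieces $E_i$ carry maps $E_i\to E_{i-1}\otimes\Omega$ with degree constraints. The twist by $\exp(\delta\theta)$ is then a unipotent modification, and lift-independence says that the extension classes built from $\delta\cdot\theta$ lie in the image of automorphisms. I would bound dimensions: the space of automorphisms has dimension at most about $r^2$, while the map $\delta\mapsto$ (obstruction class) is injective modulo a subspace, so its image has dimension at least $3g-3$ minus something controlled by $\rk$. The condition $r\le\sqrt g+1$, that is $(r-1)^2\le g$, is exactly what should make the image too large unless $\theta=0$. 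The main obstacle is this dimension count: making rigorous that $\delta\mapsto\delta\cdot\theta$ is injective enough on $\rH^1(T_X)$ when $\theta\ne0$ is nilpotent, presumably through Clifford's theorem applied to the line subbundle generated by $\theta$. I would first attack the rank-$2$ case, where $\theta\colon L\to M\otimes\Omega$ is given by a section of $\Omega\otimes M\otimes L^{-1}$, to calibrate the estimate.
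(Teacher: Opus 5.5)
\textbf{There is a genuine gap in each part.}

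\textbf{Part (1).} Your reduction step is the right one. Linearizing the family reglued by $\exp(z\,\delta\cdot\theta)$ should show that $\rH^1(T_X)\to\rH^1(\EEnd(\calH,\theta))$ vanishes. However, you assume that a family whose fibres are all isomorphic to $(\calH,\theta)$ is isomorphic to the constant family near $z=0$. That needs an argument before you can differentiate: a Grauert-type argument, using constancy in $z$ of $h^0$ of the Hom sheaves and of the kernel of $[\theta,-]$, to extend the identity at $z=0$ to an isomorphism of families on a neighbourhood.

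The real gap comes next. By Serre duality, the linear condition only says that $\psi\mapsto\tr(\psi\circ\theta)$ vanishes on $\rH^0(\EEnd(\calH)\otimes\Omega_X)$. This gives $\tr(\theta)=\tr(\theta^2)=0$. For $k\ge3$, however, $\theta^{k-1}$ lives in $\EEnd(\calH)\otimes\Omega_X^{\otimes(k-1)}$ and cannot be inserted into this pairing. Your proposed fix does not typecheck: $\tr(\theta^k)\cdot\omega$ is a section of $\Omega_X^{\otimes(k+1)}$, not a quadratic differential. So Max Noether and the surjectivity of $\rH^0(\Omega_X)\otimes\rH^0(\Omega_X)\to\rH^0(\Omega_X^{\otimes2})$ say nothing about it. The spectral-line-bundle picture also presupposes a reduced (ideally smooth) spectral curve, which fails in exactly the cases at issue.

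The missing idea is to localize at a non-Weierstrass point $x$. Put $\calL=\calO_X(x)$ with canonical section $s$. From the sequence $0\to T_X\xrightarrow{s}T_X\otimes\calL\to(\text{skyscraper at }x)\to0$ and $\rH^1(\theta)=0$, the fibre of $\theta$ at $x$, transported by an isomorphism between the fibres of $\Omega_X$ and $\calL$ at $x$, extends to a global $\Phi\in\Hom(\calH,\calH\otimes\calL)$. For $m\le g$, $\tr(\Phi^m)$ lies in $\rH^0(X,\calL^{\otimes m})=\bfC\cdot s^{\otimes m}$, hence vanishes at $x$, so $\tr(\theta^m)$ vanishes at $x$. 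This is where $r\le g$ enters. Density of non-Weierstrass points, Newton's identities and Cayley--Hamilton then give $\theta^r=0$.

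\textbf{Part (2).} You leave the decisive estimate open, and your count compares the wrong quantities. The condition $\rH^1(\theta)=0$ is not about global automorphisms absorbing a $(3g-3)$-dimensional image. Let $\calQ$ be the cokernel of the injection $\theta\colon T_X\hookrightarrow\EEnd(\calH,\theta)^{\tr=0}$. Then $\rH^1(\theta)=0$ is equivalent to surjectivity of $\rH^0(\calQ)\to\rH^1(T_X)$, so $h^0(\calQ)\ge 3g-3$. What you need is an upper bound on $h^0(\calQ)$, and that bound must involve degrees, not only ranks. The paper gets it in three steps:
\begin{itemize}
\item Semistability of the degree-$0$ bundle $\EEnd(\calH)$ forces every subsheaf of $\EEnd(\calH,\theta)$ to have degree $\le0$. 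Hence the globally generated part $\calQ'$ of $\calQ$ has $\deg\calQ'\le 2g-2$.
\item The centralizer of a nonzero trace-free endomorphism has rank $\le(r-1)^2+1$, so $\rk\calQ'\le(r-1)^2-1$.
\item For globally generated sheaves $h^0\le\deg+\rk$, which yields $g\le(r-1)^2$.
\end{itemize}
Under $r\le\sqrt g+1$ this leaves the boundary case $g=(r-1)^2$, which needs a separate structural argument. All the inequalities become equalities, and $\theta$ is forced to be of type $(2,1,\dots,1)$. The saturation $\calL$ of $T_X$ in the trace-free centralizer is then a nontrivial degree-$0$ line bundle. Let $N$ be the map induced by $\calL$. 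A Jordan--H\"older comparison along $\im(N)\otimes\calL\subset\ker N\subset\calH$ forces $\calL\cong\calO_X$, a contradiction. Neither the $\ker\theta^i$ filtration nor a rank-$2$ calibration supplies these ingredients.
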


\begin{remark}\label{rmk:moduli-perspective}
    Motivated by the moduli-theoretic approach to the $p$-adic Simpson correspondence developed by Heuer and Xu \cite{HeuerXu2026moduli}, an ongoing project of ours aims to eliminate the dependence on the formal model $\mathfrak{X}$, at least for curves, by tracking the lift-independence directly within rigid analytic moduli spaces. In parallel, exploring lifting obstructions at the integral level via the Simpson gerbe \cite[Constructions 12.2.3 and 12.4.4]{BhattpHTnotes} constitutes another active direction of our research.
\end{remark}

 Although Question \ref{ques: indep higgs to 0? integral} has a positive answer in some cases, it is not true in general even if we consider semistable Higgs bundles of degree $0$. 
 
 \begin{theorem}[ = Theorems \ref{theo: nonzero-nilp-deg0-ss-rat-indep-Higgs} and \ref{thm:integral-nonnilpotent-example}]\label{theorem:counterexample}
    \begin{enumerate}
    	\item Let $X$ be a projective smooth rigid curve over $\mathbf C$ with genus $g\geqslant 2$. There always exists a nilpotent semistable Higgs bundle of degree $0$ with nonzero Higgs field that is lift-independent.
    	
    	\item For all sufficiently large primes $p$, there exist a complete algebraically closed extension $C/\mathbb{Q}_p$, a projective smooth formal curve $\frakX$ over $\calO_C$ of genus $2$, and an integrally lift-independent non-nilpotent $\frac{1}{p-1}$-Hitchin-small Higgs bundle $(\calH,\theta)$ on $\frakX_\et$ whose generic fiber is semistable of degree $0$.
    \end{enumerate}
 \end{theorem}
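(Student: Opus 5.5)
We first make explicit how changing the lifting alters the correspondence, and then build the examples from that description. Two liftings $\widetilde X_1,\widetilde X_2$ of $X$ over $B_2$ differ by a class $\alpha\in \rH^1(X,T_X)\otimes \xi/\xi^2\cong \rH^1(X,T_X)(1)$, and every $\alpha$ occurs. By the standard twisting description of the Faltings--Heuer correspondence, $S_{\widetilde X_1}^{-1}S_{\widetilde X_2}(\calH,\theta)$ is the Higgs bundle obtained by regluing $\calH$ along a Čech cocycle $(\alpha_{ij})$ representing $\alpha$. On overlaps we use the automorphisms $\exp(\theta(\alpha_{ij}))$, where $\theta(\alpha_{ij})$ is the contraction of the Higgs field with the vector field $\alpha_{ij}$. (For nilpotent $\theta$ the exponential is a finite sum; in general, or integrally, smallness makes the exponential converge.) So lift-independence means
\[(\calH,\theta)\cong (\calH,\theta)^{\alpha}\quad\text{for all }\alpha\in \rH^1(X,T_X).\]
To first order this twist is the class $[\theta(\alpha)]\in \rH^1(X,\EEnd(\calH))$, and it lies in the tangent direction of the Higgs deformation complex $\EEnd(\calH)\xrightarrow{[\theta,-]}\EEnd(\calH)\otimes\Omega^1$. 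The twist commutes with $\theta$, hence fixes every Hitchin invariant. The real question is therefore whether it moves us within the Hitchin fibre.

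\textbf{(1), nilpotent example.} We take a Higgs bundle that is rigid inside its fibre. Let $L$ be a line bundle with $L^2\cong K_X$, and set $\calH=L\oplus L^{-1}$. Take $\theta$ to be the nilpotent field $L\to L^{-1}\otimes\Omega^1$ sending $L$ isomorphically onto $L\otimes L^{-2}\otimes K$. This is the uniformizing (Hitchin section) Higgs bundle: it is stable of degree $0$ and has nonzero nilpotent field. Here $\theta(\alpha)$ is a strictly lower-triangular endomorphism $L\to L^{-1}$ with coefficient $\alpha\in \rH^1(L^{-2})=\rH^1(T_X)$. Since $\theta(\alpha)^2=0$, the reglued bundle has transition functions $1+\theta(\alpha_{ij})$. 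This realizes $(\calH,\theta)^\alpha$ as the extension
\[0\to L^{-1}\to E\to L\to 0\]
with class $\alpha$, carrying the Higgs field induced by $L\to L^{-1}K$. We must show that this is isomorphic to the original for every $\alpha$. The approach is to use the $\bfC^*$-action: a gauge transformation $\mathrm{diag}(\lambda,\lambda^{-1})$ rescales the class $\alpha$. The Higgs field is what compensates the extension class: the cocycle $\theta(\alpha_{ij})$ should be the Čech differential of the $0$-cochain built from $\theta$ and the vector fields, which makes it a coboundary in the deformation complex. The key step, and the main obstacle, is to verify that the class vanishes in hypercohomology $\mathbb H^1$. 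We would try to exhibit an explicit gauge $g=1+u$ with $u\in \Gamma(\EEnd)$ upper triangular, so that $[\theta,u]$ cancels, using $\rH^0(L^2\otimes\Omega)$-type sections. If that fails, we would fall back on a dimension count: the nilpotent cone component has dimension $3g-3$, and the twist orbit is a unipotent group orbit inside the fibre, which we expect to be trivial on a stable point.

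\textbf{(2), integral non-nilpotent example.} For the integral statement we take $g=2$ and a spectral construction: $(\calH,\theta)=\pi_*(\calL,\lambda)$ for a spectral cover $\pi:Y\to X$ inside $T^*X$ and a line bundle $\calL$ on $Y$. The twist then acts on $\calL$ by tensoring with $\exp$ of the image of $\alpha$ under
\[\rH^1(X,T_X)\to \rH^1(Y,\calO_Y),\qquad \alpha\mapsto\lambda(\pi^*\alpha).\]
Lift-independence amounts to this translation being trivial in $\mathrm{Pic}(Y)$. We would choose a degenerate, reducible spectral curve $\lambda\in\{0,\omega\}$ with $\omega$ a $p$-adically small differential. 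The twist is then governed by the cup product $\rH^1(T_X)\times \rH^0(\Omega)\to \rH^1(\calO)$. We then use Hitchin-smallness together with a choice of $p$ large and of $C$ with suitable valuations, so that $\exp$ of this pairing lands in torsion or in the trivial part. The hardest step is producing a curve whose Kodaira--Spencer pairing with some $\omega$ vanishes integrally. We would arrange this by taking $\omega$ divisible by a large power of $p$ and a reduction with Hasse-type degeneracy, so that the image of $\omega$ in $\rH^1(\calO)$ dies. Semistability of degree $0$ follows from $\deg\calL$ being chosen so that $\pi_*\calL$ has degree $0$.
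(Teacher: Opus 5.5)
Your general framework agrees with Lemma \ref{lem:action-of-lift} and Remark \ref{rmk:rat-transition}: lift-independence means $(\calH,\theta)$ is isomorphic to its regluing by $\exp(\theta(\alpha_{ij}))$ for every class $\alpha$. However, neither of your examples is lift-independent, so the steps you flag as the main obstacles are false, not merely unverified.

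In part (1), take $\calH=L\oplus L^{-1}$ with the uniformizing field. Then $\EEnd(\calH,\theta)=\calO_X\cdot\id\oplus\HHom(L,L^{-1})$, and $\theta\colon T_X\to\HHom(L,L^{-1})\cong L^{-2}\cong T_X$ is an isomorphism onto a direct summand. Hence $\rH^1(\theta)$ is injective. Since $\rH^1(X,\EEnd(\calH,\theta))$ injects into $\mathbb{H}^1$ of the deformation complex, the class you want to kill in $\mathbb{H}^1$ is nonzero for every $\alpha\neq0$.

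There is also a direct argument. Your regluing $E_\alpha$ is the extension of $L$ by $L^{-1}$ with class $\alpha$, and $\rH^0(X,L^{-2})=0$. So the exact sequence $0\to\Hom(L,L^{-1})\to\Hom(L,E_\alpha)\to\Hom(L,L)\to\Ext^1(L,L^{-1})$, whose last map sends $1$ to $\alpha$, gives $\Hom(L,E_\alpha)=0$. Thus $E_\alpha\not\cong L\oplus L^{-1}$ even as vector bundles. The gauge $\mathrm{diag}(\lambda,\lambda^{-1})$ only rescales $\alpha$ and $\theta$ together. The twists $(E_\alpha,\theta)$ are pairwise non-isomorphic and form a $(3g-3)$-dimensional family in the nilpotent cone, not a point.

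This failure is forced: by Theorem \ref{theo: r<(g-1)^{1/2} quasi-independ to 0}, no semistable Higgs bundle of rank $r\le\sqrt g+1$ (in particular of rank $2$) with $\theta\neq0$ is even cohomologically lift-independent. The missing idea is to make $\theta\colon T_X\to\EEnd(\calH,\theta)$ factor through a subsheaf whose $\rH^1$ maps to zero. The paper does this as follows:
\begin{itemize}
\item Take a nontrivial degree-$0$ line bundle $\calL$ and an extension $0\to\calL\to\calV\to\calO_X^{\oplus(g-1)}\to0$ whose connecting map $\rH^0(\calO_X^{\oplus(g-1)})\to\rH^1(\calL)$ is an isomorphism, so $\rH^1(\calL)\to\rH^1(\calV)$ is zero.
\item Set $\calH=\calV\oplus\calO_X$, with $\theta$ induced by some $0\neq\omega\in\rH^0(\calL\otimes\Omega_X)$.
\item The coboundary $\theta(a)=\psi_2-\psi_1$ can then be chosen in $\calV=\HHom(\calO_X,\calV)\subset\EEnd(\calH,\theta)$. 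Sections there square to zero and compose to zero, so $\exp(\theta(a))=(\id+\psi_2)(\id+\psi_1)^{-1}$.
\end{itemize}
That example has rank $g+1$.

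In part (2), any Higgs bundle with spectral curve $\{0,\omega\}$ has rank $2$ and $\tr\theta=\omega\neq0$. By Lemma \ref{lemma: Basic lemma} its generic fiber is not cohomologically lift-independent, so by Theorem \ref{theo: int lift independ to quasi independ} it is not integrally lift-independent. More generally, Theorems \ref{theo:rank_le_genus_nilpotent} and \ref{theo: int lift independ to quasi independ} force any non-nilpotent example on a genus-$2$ curve to have rank at least $3$. Concretely, the map $\rH^1(X,T_X)\to\rH^1(X,\calO_X)$, $\alpha\mapsto\omega\alpha$, is Serre dual to multiplication by $\omega$ from $\rH^0(X,\Omega_X)$ to $\rH^0(X,\Omega_X^{\otimes2})$. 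That multiplication is injective, so the map is surjective and nonzero.

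Rescaling $\omega$ by $p^N$, taking $p$ large, or using a Hasse-type degeneration of the special fiber cannot help. The proof of Theorem \ref{theo: int lift independ to quasi independ} moves the lifting over a disk $z\in\calO_C$. The resulting analytic family of twists can be constant only if the first-order class vanishes on the characteristic-$0$ generic fiber. Landing in nontrivial torsion would not give an isomorphism anyway.

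Your spectral framework with $\calL=\calO_Y$ is in fact the paper's. The real input is the existence, in characteristic $0$, of a cover $\pi\colon Y\to X$ and $\lambda\neq0$ for which $\rH^1(X,T_X)\to\rH^1(Y,\calO_Y)$, $\alpha\mapsto\lambda\cdot\pi^*\alpha$, vanishes. The paper gets this from Landesman--Litt's solution of Prill's problem (Lemma \ref{lem:Prill-problem}). It provides a genus-$2$ curve $X$, a finite \'etale connected cover $Y\to X$, and $0\neq\lambda\in\rH^0(Y,\Omega_Y)$ with $\tr_{Y/X}(\lambda\,\cdot\,)=0$ on $\rH^0(Y,\Omega_Y)$, which is exactly the Serre-dual vanishing. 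The paper then:
\begin{itemize}
\item spreads this out over a finitely generated $\bZ$-algebra;
\item specializes to $\calO_C$ for large $p$;
\item replaces $\theta$ by $p\theta$ to obtain $\frac1{p-1}$-Hitchin-smallness;
\item exponentiates the coboundary $p(s_j-s_i)$ inside the commutative algebra $\pi_*\calO_Y$.
\end{itemize}
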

 
 We explain why semistability is needed for the vanishing assertion in Theorem \ref{theo:integral_rigidity}(2).
 
  \begin{example}[ = Example \ref{eg: without semi stable}]\label{exam:counterexample}
 		Suppose that $X$ is a projective smooth curve over $\mathbf C$ of genus $g\geqslant 2$. Let $\mathcal{L}$ be a line bundle of degree $d > 2g - 2$. For any nonzero global section $s \in  \mathrm{H}^0(X, \mathcal{L} \otimes \Omega_X(-1))$, we define a Higgs bundle $(\calH_s,\theta_s)$ of rank $2$ such that
 		\[\calH_s = \mathcal{L} \oplus \mathcal{O}_X\text{ and }\theta_s = \begin{pmatrix} 0 & s \\ 0 & 0 \end{pmatrix} : \mathcal{H}_{s} \xrightarrow{\mathrm{projection}} \mathcal{O}_X \xrightarrow{s} \mathcal{L} \otimes \Omega_{X}(-1)  \xrightarrow{\subset} \mathcal{H}_s \otimes \Omega_{X}(-1) .\]
 		Then $(\mathcal{H}_s, \theta_s)$ is not semistable but lift-independent.
 	\end{example}

 \begin{remark}
 	For a projective smooth curve $X$ over $\mathbf C = \Cp$, the complete algebraic closure of $\Qp$, Faltings conjectured that any semistable Higgs bundle of degree $0$ should correspond to a $\mathbf C$-local system on $X_\et$ via the $p$-adic Simpson correspondence described in Theorem \ref{theorem:Faltings-Heuer}. Accepting Faltings' conjecture, we hope that, if a $\mathbf{C}$-local system over $X$ is lift-independent, then its $p$-adic Simpson correspondence is compatible with the complex Simpson correspondence.
 \end{remark}
 
 Now, we explain the key ingredient to obtain Theorems \ref{theo:integral_rigidity} and \ref{theorem:counterexample}. Note that for any Higgs bundle $(\calH,\theta)$ on $X_\et$, the Higgs field induces a natural morphism 
 \begin{equation}\label{equ:H(theta)-map}
 	\rH^1(\theta):\rH^1(X_\et,T_X(1))\to \rH^1(X_\et,\EEnd_{\calO_X}(\calH,\theta)).
 \end{equation}
  If the natural morphism \eqref{equ:H(theta)-map} is zero, then we say $(\calH,\theta)$ is \emph{cohomologically lift-independent}. The key observation is the following theorem:
  
 \begin{theorem}[ = Theorem \ref{theo: int lift independ to quasi independ}]\label{theo: intro int lift independ to coho independ}
 	Let $\mathfrak{X}$ be a proper smooth formal scheme over $\mathcal{O}_{\mathbf{C}}$ with generic fiber $X$, admitting a flat $A_2$-lifting. Let $(\calH,\theta)$ be a $\frac{1}{p-1}$-Hitchin-small Higgs bundle on $\frakX_\et$ with the pull-back $(\calH[\frac1p],\theta[\frac{1}{p}])$ on $X_\et$. If $(\calH,\theta)$ is integrally lift-independent, then $(\calH[\frac1p],\theta[\frac{1}{p}])$ is cohomologically lift-independent.
 \end{theorem}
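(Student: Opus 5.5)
The plan is to make explicit how the integral correspondence $\frakS_{\widetilde\frakX}$ changes when the lifting is changed, and then to read off the obstruction class. Liftings of $\frakX$ over $A_2$ form a torsor under $\rH^1(\frakX_\et, T_{\frakX}\otimes \xi A_2/\xi^2A_2)$. After inverting $p$ and identifying $\xi A_2/\xi^2 A_2$ with $\calO_{\bfC}(1)$ (up to the usual normalization), this group becomes $\rH^1(X_\et, T_X(1))$.

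\textbf{Step 1: the twisting formula.} Fix two liftings $\widetilde\frakX_1,\widetilde\frakX_2$ whose difference is a class $\delta\in \rH^1(\frakX_\et,T_\frakX\{1\})$. I expect $\frakS_{\widetilde\frakX_1}\circ\frakS_{\widetilde\frakX_2}^{-1}(\calH,\theta)$ to be the Higgs bundle obtained from $(\calH,\theta)$ by twisting with a cocycle built from $\delta$. On an étale cover $\{\frakU_i\}$ on which both liftings are identified, the identifications differ on overlaps by derivations $\delta_{ij}\in T_\frakX(\frakU_{ij})\{1\}$. The corresponding period-sheaf/Higgs-field data then change by $\exp(\theta(\delta_{ij}))$, where $\theta(\delta_{ij})$ means contracting the Higgs field with the vector field. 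This is the standard computation in the small correspondence (Faltings, Abbes--Gros--Tsuji, Wang), and Hitchin-smallness is exactly what makes the exponential converge integrally. The conclusion is
\[
\frakS_{\widetilde\frakX_1}\frakS_{\widetilde\frakX_2}^{-1}(\calH,\theta)\cong(\calH^{\delta},\theta),
\]
where $\calH^\delta$ is obtained by regluing $\calH|_{\frakU_i}$ along the automorphisms $\exp(\theta(\delta_{ij}))$. These automorphisms commute with $\theta$, so the twisted object is again a Higgs bundle.

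\textbf{Step 2: linearize by scaling.} Lift-independence applies to every lifting, hence to the class $\lambda\delta$ for every $\lambda\in\calO_{\bfC}$. So $(\calH,\theta)\cong(\calH^{\lambda\delta},\theta)$ for all $\lambda$. The cocycles $\exp(\lambda\theta(\delta_{ij}))$ are elements of $\rH^1(\frakX_\et,\underline{\mathrm{Aut}}(\calH,\theta))$, and they vary in a one-parameter family through the trivial class. Their derivative at $\lambda=0$ is the class
\[
\rH^1(\theta)(\delta)\in\rH^1(\EEnd(\calH,\theta)).
\]
To get from "each member of the family is isomorphic to the trivial one" to "the derivative vanishes", I would use the moduli of twisted objects. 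Pass to $\calO_{\bfC}/p^n$, or work with $\mathrm{Exp}$ on $p^N\calO_{\bfC}$, where $\log$ identifies small automorphisms with endomorphisms. An isomorphism $(\calH,\theta)\cong(\calH^{\lambda\delta},\theta)$ for $\lambda=p^N\mu$ with $\mu$ small gives a trivialization of the cocycle up to the action of global automorphisms.

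\textbf{Step 3: handle global automorphisms.} This is the main obstacle. Triviality of the $\rH^1$ class of the nonabelian cocycle is not the same as vanishing of its linearization, because global automorphisms of $(\calH,\theta)$ that are not close to the identity can intervene. I would try the following argument. The set of $\lambda$ for which the twist is trivial is all of $\calO_{\bfC}$. Consider the rigid-analytic group $G=\mathrm{Aut}(\calH,\theta)$ acting on the space of gluing data. The map $\lambda\mapsto[\calH^{\lambda\delta}]$ then lands in the single $G$-orbit of the trivial gluing, i.e. in a quotient by the image of $G$. Comparing tangent spaces at $\lambda=0$, the tangent map of the orbit map $\mathrm{Lie}(G)=\rH^0(\EEnd(\calH,\theta))\to C^1(\EEnd)$ has image equal to the coboundaries. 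Hence the derivative class lies in the coboundaries, so $\rH^1(\theta)(\delta)=0$ after inverting $p$.

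To make this rigorous I would work with Čech cochains with $\frakX$-integral bounds. The key point is that the isomorphisms for small $\lambda$ can be chosen to depend analytically on $\lambda$, or at least congruent to the identity modulo $\lambda$ up to a finite-order automorphism. I would arrange this by noting that $G$ is an affine algebraic group whose identity component has finite index. Replacing $\lambda$ by suitable small multiples puts the isomorphisms in the identity component, and reducing modulo $\lambda^2$ then kills the derivative class. Since $\delta$ ranges over all classes (up to multiplication by a power of $p$), the map $\rH^1(\theta)$ vanishes.
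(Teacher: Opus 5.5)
Your Steps~1 and~2 match the paper: the twisting formula is Lemma~\ref{lem:action-of-lift}, and the paper also scales the class by a parameter. The gap is in Step~3, which you rightly flag as the crux; the mechanism you propose there does not work. Lift-independence gives, for each $\lambda$ separately, some isomorphism $\phi_\lambda\colon(\calH,\theta)\cong(\calH^{\lambda\delta},\theta)$. To extract a linear term you need these isomorphisms normalized so that $\phi_\lambda\equiv\id$ modulo $\lambda$, or better, depending analytically on $\lambda$ with $\phi_0=\id$. Your normalization does not produce this, for three reasons:
\begin{itemize}
\item $\phi_\lambda$ is not an element of $G=\mathrm{Aut}(\calH,\theta)$, since it goes between different objects.
\item Lying in the identity component of $G$ says nothing about being congruent to $\id$ modulo $\lambda$. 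In any case $G$, as the unit group of a finite-dimensional algebra, is already connected, so that step is vacuous.
\item ``Replacing $\lambda$ by small multiples'' gives no relation between the independently chosen $\phi_\lambda$ for different $\lambda$.
\end{itemize}
The tangent-space heuristic is also misstated. The coboundaries are the image of the differential of the action of the $0$-cochains $C^0(\EEnd(\calH,\theta))$, whereas $\mathrm{Lie}(G)$ maps to zero in $C^1$. Moreover, ``a curve lying in an orbit has tangent vector in the orbit's tangent space'' is exactly the nontrivial assertion in this infinite-dimensional \v{C}ech setting, so it cannot be assumed.

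The missing idea is to make the family genuinely rigid-analytic and then use cohomology and base change.
\begin{enumerate}
\item For $\eta$ in the source, pick a \v{C}ech cocycle $(D_{ij})$ for $\rho^{-1}T_{\frakX}(1)$ representing $p^N\eta$.
\item On $X\times\bD^1$, with $\bD^1=\Spa(\bfC\langle z\rangle)$, glue $(M_i[1/p]\langle z\rangle,\theta_i)$ along $\exp(\rho z\theta_j(D_{ij}))\circ g_{ij}$. Hitchin-smallness makes the exponential converge.
\item Each fiber at $z\in\bD^1(\bfC)=\calO_{\bfC}$ is $\frakS_{\widetilde\frakX_z}\frakS^{-1}_{\widetilde\frakX_0}(\calH,\theta)[1/p]$, which lift-independence identifies with $(\calH[1/p],\theta)$.
\item Because all fibers are isomorphic, the dimensions of the fiberwise Higgs $\Hom$-spaces are constant. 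Grauert then makes $q_*\HHom(p^*\calH[1/p],\calF)$ and its $\Omega^1_X(-1)$-twist vector bundles on the reduced base. The kernel of the map induced by the Higgs fields is then a vector bundle whose fibers are the Higgs $\Hom$-spaces; this is Lemma~\ref{lem:fiberwise_trivial_Higgs_bundle_rigid}.
\item Hence $\id$ at $z=0$ extends to a Higgs isomorphism $\varphi$ over a neighborhood of $0$. This is precisely the analytic dependence you wanted.
\item Writing $\varphi|_{U_i}\equiv\id+z\psi_i \bmod z^2$ yields $\rho\theta_j(D_{ij})=\psi_j-g_{ij}\psi_ig_{ij}^{-1}$. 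So $\rho p^N\,\rH^1(\theta)(\eta)=0$, and hence $\rH^1(\theta)(\eta)=0$.
\end{enumerate}
Working on the generic fiber this way also avoids the integral torsion and reduction-mod-$\lambda$ issues that your $\calO_{\bfC}/p^n$ suggestion would have to handle.
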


\begin{remark}
A closely related cohomological condition appears in the work of
Hu--Sun--Zuo \cite{HuSunZuo2025Isomonodromic}.
Let $(E,\theta)$ be a Higgs bundle on a smooth projective complex
curve $X$ of genus $g\geq 2$, and put
\[
  Z_\theta=\EEnd(E,\theta),
  \qquad
  \mathcal C_\theta^\bullet=
  \left[
    \EEnd(E)
    \xrightarrow{[\theta,-]}
    \EEnd(E)\otimes K_X
  \right],
\]
where the complex is concentrated in degrees $0$ and $1$.
Their map factors as
\[
  \theta_* \colon
  H^1(X,T_X)
  \xrightarrow{H^1(\theta)}
  H^1(X,Z_\theta)
  \hookrightarrow
  \mathbb H^1(X,\mathcal C_\theta^\bullet).
\]
Indeed, writing
$Q=\EEnd(E)/Z_\theta$, the short exact sequence
of complexes
\[
  0\longrightarrow Z_\theta[0]
  \longrightarrow \mathcal C_\theta^\bullet
  \longrightarrow
  \left[Q\longrightarrow\EEnd(E)\otimes K_X\right]
  \longrightarrow 0
\]
gives the asserted injection, since the last complex has
vanishing zeroth hypercohomology.

Consequently, $\theta_*=0$ is equivalent to cohomological
lift-independence. In their polystable degree-zero setting,
this condition expresses the vanishing, at the given point,
of the anti-holomorphic derivative of the isomonodromic
deformation in every deformation direction of $X$.
Their low-rank results therefore concern the same cohomological
condition. Theorem \ref{theo: intro int lift independ to coho independ} relates
this condition to integral lift-independence in the $p$-adic
Simpson correspondence, while our vanishing results also
address semistable Higgs bundles without assuming polystability.
\end{remark}

  According to this theorem, to prove Theorem \ref{theo:integral_rigidity}, it suffices to prove the corresponding nilpotence and vanishing results for cohomologically lift-independent Higgs bundles, namely Theorems \ref{theo:rank_le_genus_nilpotent} and \ref{theo: r<(g-1)^{1/2} quasi-independ to 0}. To obtain Example \ref{exam:counterexample} and Theorem \ref{theorem:counterexample}, we first construct cohomologically lift-independent Higgs bundles, and then explain that our examples already satisfy the conditions for being lift-independent.

\subsection{Structure of the paper and use of AI tools}

The paper is organized as follows. In Section \ref{section: Quasi-lift ind}, we recall the integral small $p$-adic Simpson
correspondence for smooth formal schemes and introduce the notions of integral lift-independence
and cohomological lift-independence. We then compare these two notions and prove that integral
lift-independence imposes a cohomological vanishing condition on the generic fiber.

In Section \ref{section: indep to 0}, we study cohomologically lift-independent Higgs bundles on smooth projective curves.
The section proves that, under suitable semistability, rank, and genus hypotheses, such Higgs bundles
must have zero Higgs field. Combining these results with the comparison theorem from Section \ref{section: Quasi-lift ind}
gives the main positive results on integral lift-independence.

In Section \ref{section: non zero independ}, we construct examples showing that nonzero lift-independent Higgs fields do exist
outside the range of the positive results. We first give nilpotent examples with nonzero Higgs fields,
and then construct non-nilpotent examples in degree $0$. These examples show that the
lift-independence problem is sensitive to both semistability and the numerical hypotheses appearing
in the main theorems.

We also disclose the use of AI assistance. During the preparation of this work, the authors used
Rethlas \cite{JuEtAl2026Rethlas} as an auxiliary tool for mathematical exploration, proof search, and exposition.
All AI-assisted suggestions were checked, revised, and incorporated only after independent
verification by the authors. The authors take full responsibility for all statements, proofs, and any
remaining errors in the paper.

\subsection*{Acknowledgments}

The first author would like to express his sincere gratitude to his advisors Ruochuan Liu and Enlin Yang. 
The authors would like to express their sincere gratitude to Yupeng Wang for introducing this problem to us and for his continuous encouragement. 
We are also deeply grateful to Qixiang Wang, Daxin Xu, Qizheng Yin and Mingjia Zhang for their insightful discussions and valuable comments. 
Part of the work was carried out when the authors were visiting the Shanghai Center for Mathematical Sciences at Fudan University. 
Xiangyu Pan was partially supported by the National Key R\&D Program of China (Grant
No.~2021YFA1001400). 

The authors would like to thank the Rethlas team, namely Haocheng Ju, Jiedong
Jiang, Shurui Liu, Guoxiong Gao, Yuefeng Wang, Zeming Sun, Leheng Chen, Bin Wu,
Liang Xiao, and Bin Dong, for their contributions to the development of Rethlas \cite{JuEtAl2026Rethlas}.

\addtocontents{toc}{\protect\setcounter{tocdepth}{-1}}
\section*{Notations and Conventions}

For sheaves $\mathcal F,\mathcal G$ of modules on a ringed space $X$, we write
$\HHom(\mathcal F,\mathcal G)$ for the sheaf of homomorphisms and
$\EEnd(\mathcal F):=\HHom(\mathcal F,\mathcal F)$ for the endomorphism sheaf.
The corresponding groups of global morphisms are denoted without underlines:
\[
    \Hom(\mathcal F,\mathcal G)=\mathrm H^0(X,\HHom(\mathcal F,\mathcal G)),
    \qquad
    \End(\mathcal F)=\mathrm H^0(X,\EEnd(\mathcal F)).
\]
We use the same convention for Higgs bundles. In particular,
$\EEnd(\mathcal H,\theta)$ is the subsheaf of endomorphisms commuting with
$\theta$, and $\End(\mathcal H,\theta)$ is its ring of global sections.

For a complete non-Archimedean field $K$ and its ring of integers $K^+$, by a
\emph{formal scheme} over $K^+$ we always mean a topologically finitely
presented $\pi$-adic formal scheme over $K^+$, where $\pi\in K^+$ is a
pseudo-uniformizer.

For a variety $X$ over $K$ (either algebraic or rigid analytic) and a coherent
$\mathcal O_X$-module $\mathcal F$, when $\mathrm H^i(X,\mathcal F)$ is a
finite-dimensional $K$-vector space (for instance, when $X$ is proper over
$K$), we denote its dimension over $K$ by
\[
    h^i(X,\mathcal F):=\dim_K\mathrm H^i(X,\mathcal F).
\]

For a rigid variety $X/K$, a closed point $s\in X$ with residue field
$\kappa(s)$, and a coherent $\mathcal O_X$-module $\mathcal F$, we write
$\mathcal F(s):=\mathcal F_s\otimes_{\mathcal O_{X,s}}\kappa(s)$ for its fiber,
viewed as a finite-dimensional $\kappa(s)$-vector space. For a morphism
$f:\mathcal F\to\mathcal G$, we write
$f(s):=f_s\otimes1:\mathcal F(s)\to\mathcal G(s)$ for the induced
$\kappa(s)$-linear map. More generally, for a morphism $Y\to S$ of rigid
varieties and a closed point $s\in S$, the same notation denotes pullback of
sheaves or morphisms on $Y$ to the fiber
$Y(s):=Y\times_S\operatorname{Spa}(\kappa(s))$.

Furthermore, let $X$ be an integral variety (either algebraic or rigid
analytic), and let $\mathcal F$ be a coherent $\mathcal O_X$-module. We denote
by $\mathcal F_{\mathrm{tor}}$ the \emph{torsion subsheaf} of $\mathcal F$,
whose local sections are those annihilated by some nonzero local section of
$\mathcal O_X$. We denote by $\mathcal F_{\mathrm{tf}}$ the
\emph{torsion-free quotient} of $\mathcal F$, defined by the canonical short
exact sequence
\[
    0\to\mathcal F_{\mathrm{tor}}\to\mathcal F
    \to\mathcal F_{\mathrm{tf}}\to0.
\]

\addtocontents{toc}{\protect\setcounter{tocdepth}{1}}
 
\section{Lift-Independence and Cohomological Lift-Independence}\label{section: Quasi-lift ind}
The purpose of this section is to relate integral lift-independence to a cohomological vanishing condition. 
We first recall the integral small $p$-adic Simpson correspondence and the period sheaf used in its construction. 
We then compute how the correspondence changes when the $A_2$-lifting varies. 
Finally, this transition formula is used to prove that integral lift-independence implies cohomological lift-independence after inverting $p$. 

\subsection{Recollection of the integral $p$-adic Simpson functor}

In this subsection, we recall the small $p$-adic Simpson correspondence for smooth formal schemes constructed in \cite{MinWang2024}.

Let $\frakX$ be a smooth formal scheme over $\calO _{\mathbf{C}}$ of relative dimension $d$ and with generic fiber $X$. 
Assume that there exists a flat $A_{2}$-lifting $\widetilde{\frakX}$ of $\frakX$. 
Let $\nu : X_{\proet} \to \frakX_{\et}$ be the canonical morphism. 
Then we have a sequence of morphisms of sheaves on the pro-\'etale site $X_{\mathrm{pro\acute{e}t}}$
\[ \nu^{-1}\calO _{\widetilde{\frakX}} \xrightarrow[]{} \nu^{-1}\calO _{\frakX} \xrightarrow[]{} \widehat{\calO }^+_{X},\]
which induces a distinguished triangle of completed cotangent complexes:
\[ \widehat{\bL}_{\calO _{\frakX}/\calO _{\widetilde{\frakX}}} \widehat{\otimes}^{\bL}_{\calO _{\frakX}} \widehat{\calO }^+_{X} \xrightarrow[]{} 
\widehat{\bL}_{\widehat{\calO }^+_{X}/\calO _{\widetilde{\frakX}}} \xrightarrow[]{} 
\widehat{\bL}_{\widehat{\calO }^+_{X}/\calO _{\frakX}} \xrightarrow[]{+1},\]
here we omit the $\nu^{-1}$. 
Taking cohomologies and Tate twists, we obtain the following short exact sequence \cite[Theorem 2.9]{wang2023padic} of $\widehat{\calO}^+_{X}$-modules
\[ 0 \xrightarrow[]{} \widehat{\calO }^+_{X} \xrightarrow[]{} 
\rho \mathrm{H}^{-1}(\widehat{\mathbb{L}}_{\widehat{\calO }^+_{X}/\calO _{\widetilde{\frakX}}})(-1) \xrightarrow[]{} 
\widehat{\calO }^+_{X} \widehat{\otimes}_{\calO _{\frakX}} \rho \widehat{\Omega}^1_{\frakX}(-1) \xrightarrow[]{} 0,\]
where $\rho=\zeta_{p}-1$.

\begin{definition}[{\cite[Theorem 2.9]{wang2023padic}}]\label{def:int-Faltings-ext}
    The sheaf 
    \[\mathcal{E}^+=\mathcal{E}^+_{\widetilde{\frakX}} = \rho \mathrm{H}^{-1}(\widehat{\mathbb{L}}_{\widehat{\calO }^+_{X}/\calO _{\widetilde{\frakX}}})(-1)\]
    on $X_{\mathrm{pro\acute{e}t}}$ is called the integral Faltings extension associated to the $A_{2}$-lifting $\widetilde{\frakX}$.
\end{definition}

\begin{lemma}[{\cite[Lemma 2.1, (2.4)]{MinWang2024}}]\label{lem:Quillen-Illusie}
    There exists an exact sequence 
    \[ 0 \xrightarrow[]{} \Gamma(\widehat{\calO }^+_{X}) \xrightarrow[]{}  \Gamma(\mathcal{E}^+) \xrightarrow[]{\partial} 
    \Gamma(\mathcal{E}^+) \otimes_{\calO _{\frakX}} \rho \widehat{\Omega}^1_{\frakX}(-1) \xrightarrow[]{\partial} \cdots \xrightarrow[]{\partial} 
    \Gamma(\mathcal{E}^+) \otimes_{\calO _{\frakX}} \rho^d\widehat{\Omega}^d_{\frakX}(-d) \xrightarrow[]{} 0.
    \]
    Here $\Gamma(-)$ is the free pd-algebra functor. 
\end{lemma}

Let $e$ be the basis $1$ of $\widehat{\mathcal{O}}_X^+$ as a finite free $\widehat{\mathcal{O}}_X^+$-module. 
Then $\Gamma(\widehat{\mathcal{O}}_X^+) = \widehat{\mathcal{O}}_X^+[e]_{\mathrm{pd}}$ is the free pd-algebra over $\widehat{\mathcal{O}}_X^+$ generated by $e$. 
The element $e - \rho$ admits divided powers of all orders in $\Gamma(\widehat{\mathcal{O}}_X^+)$. 
Let $\mathcal{I}_{\mathrm{pd}}$ be the pd-ideal of $\Gamma(\widehat{\mathcal{O}}_X^+)$ principally generated by $e - \rho$. Then we have $\widehat{\mathcal{O}}_X^+ \cong \Gamma(\widehat{\mathcal{O}}_X^+)/\mathcal{I}_{\mathrm{pd}}$.

\begin{definition}[{\cite[Definition 2.2]{MinWang2024}}]\label{def:period-ring-OCpd}
    \mbox{}
    \begin{enumerate}
        \item Define $\calO \mathbb{C}_{\mathrm{pd}}^{+} = \Gamma(\mathcal{E}^+) \otimes_{\Gamma(\widehat{\calO }^+_{X})} \widehat{\calO }^+_{X}$. 
        Here $\widehat{\calO }^+_{X}$ is regarded as a $\Gamma(\widehat{\calO }^+_{X})$-algebra via the isomorphism $\widehat{\mathcal{O}}_X^+ \cong \Gamma(\widehat{\mathcal{O}}_X^+)/\mathcal{I}_{\mathrm{pd}}$.
        \item Define $\calO \widehat{\mathbb{C}}_{\mathrm{pd}}^{+}=\lim_{n} \calO \mathbb{C}_{\mathrm{pd}}^{+}/p^n$ as the $p$-adic completion of $\calO \mathbb{C}_{\mathrm{pd}}^{+}$.
        \item Denote by $\nabla : \calO \widehat{\mathbb{C}}_{\mathrm{pd}}^{+} \xrightarrow[]{} 
        \calO \widehat{\mathbb{C}}_{\mathrm{pd}}^{+} \widehat{\otimes}_{\calO _{\frakX}} \rho \widehat{\Omega}^{1}_{\frakX}(-1)$ the $\widehat{\calO }^+_{X}$-linear morphism induced by $\partial$ in Lemma \ref{lem:Quillen-Illusie}.
    \end{enumerate}
\end{definition}

The period ring $\calO \widehat{\mathbb{C}}_{\mathrm{pd}}^{+}$ satisfies the Poincar\'e Lemma.

\begin{lemma}[{\cite[Proposition 2.3]{MinWang2024}}]\label{lem:Poincare-lem-for-OCpd}
    The following sequence is exact: 
    \[ 0 \xrightarrow[]{} \widehat{\calO }^+_{X} \xrightarrow[]{} \calO \widehat{\mathbb{C}}_{\mathrm{pd}}^{+} \xrightarrow[]{\nabla} 
    \calO \widehat{\mathbb{C}}_{\mathrm{pd}}^{+} \otimes_{\calO _{\frakX}} \rho\widehat{\Omega}^1_{\frakX}(-1) \xrightarrow[]{\nabla} \cdots \xrightarrow[]{\nabla} 
    \calO \widehat{\mathbb{C}}_{\mathrm{pd}}^{+} \otimes_{\calO _{\frakX}} \rho^d\widehat{\Omega}^d_{\frakX}(-d) \xrightarrow[]{} 0 . \] 
    In particular, $\nabla$ defines a Higgs field on $\calO \widehat{\mathbb{C}}_{\mathrm{pd}}^{+}$.
\end{lemma}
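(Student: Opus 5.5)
The plan is to obtain the statement from Lemma \ref{lem:Quillen-Illusie} by base change along $\Gamma(\widehat{\calO}^+_X)\to\widehat{\calO}^+_X$, and then pass to the $p$-adic completion.

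\textbf{Step 1: reduce to a local, split situation.} The assertion is local on $X_{\proet}$. By definition $\mathcal{E}^+$ is an extension of the locally free module $\widehat{\calO}^+_X\widehat{\otimes}_{\calO_{\frakX}}\rho\widehat{\Omega}^1_{\frakX}(-1)$ by $\widehat{\calO}^+_X$. So, locally, we may choose a splitting and write
\[\mathcal{E}^+\cong \widehat{\calO}^+_X\oplus \bigoplus_{i=1}^d\widehat{\calO}^+_X y_i .\]
Here $y_i$ lifts a basis $\rho\,\mathrm{d}\log T_i$ (twisted by $(-1)$) coming from étale coordinates $T_1,\dots,T_d$ on a small affine open of $\frakX$. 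The free pd-algebra functor turns direct sums into tensor products, so
\[\Gamma(\mathcal{E}^+)\cong\Gamma(\widehat{\calO}^+_X)\otimes_{\widehat{\calO}^+_X}\widehat{\calO}^+_X[y_1,\dots,y_d]_{\mathrm{pd}}.\]
In particular every term of the complex in Lemma \ref{lem:Quillen-Illusie} is locally free, hence flat, over $\Gamma(\widehat{\calO}^+_X)$. This holds because $\rho^k\widehat{\Omega}^k_{\frakX}(-k)$ is locally free over $\calO_{\frakX}$.

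\textbf{Step 2: base change along $\Gamma(\widehat{\calO}^+_X)\to\widehat{\calO}^+_X=\Gamma(\widehat{\calO}^+_X)/\mathcal{I}_{\mathrm{pd}}$.} The complex of Lemma \ref{lem:Quillen-Illusie} is bounded, exact, and consists of flat modules. Splitting it into short exact sequences shows inductively that all the intermediate kernels are flat. Hence it remains exact after applying $-\otimes_{\Gamma(\widehat{\calO}^+_X)}\widehat{\calO}^+_X$. The first term becomes $\widehat{\calO}^+_X$. The remaining terms become $\calO\mathbb{C}^+_{\mathrm{pd}}\otimes_{\calO_{\frakX}}\rho^k\widehat{\Omega}^k_{\frakX}(-k)$. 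The differentials $\partial$ become $\nabla$; for this I must check that $\partial$ is $\Gamma(\widehat{\calO}^+_X)$-linear, which holds because $\partial$ kills the image of $\Gamma(\widehat{\calO}^+_X)$. This gives the uncompleted Poincaré lemma. As a cross-check, in the coordinates of Step 1 the complex is the Koszul/de Rham complex of $\widehat{\calO}^+_X[y_1,\dots,y_d]_{\mathrm{pd}}$ with $\nabla=\sum_i \partial/\partial y_i\otimes \rho\,\mathrm{d}\log T_i$. Its exactness is the elementary pd-Poincaré lemma: $y^{[n]}$ has antiderivative $y^{[n+1]}$, and one then applies Künneth over the variables.

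\textbf{Step 3: $p$-adic completion.} The terms are locally free over $\calO\mathbb{C}^+_{\mathrm{pd}}$, which is $p$-torsion free because locally it is a free $\widehat{\calO}^+_X$-module on the basis $y^{[\underline n]}$. An exact complex of $p$-torsion-free modules stays exact modulo $p^n$ for every $n$. The transition maps of the kernel systems are surjective, being reductions of the same free modules, so Mittag-Leffler holds. Therefore the inverse limit is exact, which yields the sequence for $\calO\widehat{\mathbb{C}}^+_{\mathrm{pd}}$.

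\textbf{Step 4: the Higgs field.} Since $\nabla\circ\nabla=0$ in the complex, $\nabla$ satisfies $\nabla\wedge\nabla=0$. It is also $\widehat{\calO}^+_X$-linear, so it is a Higgs field.

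\textbf{Main obstacle.} I expect the main obstacle to be the flatness and linearity bookkeeping in Step 2. One must check that the differentials $\partial$ are compatible with $\mathcal{I}_{\mathrm{pd}}$, and that the local splitting is compatible with the pd-structure on $e-\rho$. If this bookkeeping fails, the fallback is to verify exactness directly in the explicit local coordinates given in Step 2.
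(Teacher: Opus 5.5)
The paper gives no argument of its own here, only the citation \cite[Proposition 2.3]{MinWang2024}. Your proof is correct and is the standard argument behind that citation: flat base change of Lemma \ref{lem:Quillen-Illusie} along $\Gamma(\widehat{\calO}^+_X)\to\widehat{\calO}^+_X$, followed by $p$-adic completion of a complex of $p$-torsion-free sheaves. The paper's setup of Lemma \ref{lem:Quillen-Illusie} and Definition \ref{def:period-ring-OCpd} is designed for exactly this.

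The one step to tighten is the limit in Step 3. For sheaves on $X_{\proet}$, surjective transition maps give $R^1\lim=0$ only through repleteness of the topos or a vanishing statement on a basis, not by plain Mittag-Leffler. You can avoid this entirely: the $\widehat{\calO}^+_X$-linear contracting homotopy from your cross-check ($y_i^{[n]}\,\mathrm{d}y_i\mapsto y_i^{[n+1]}$, tensored over the $d$ variables) is defined on the local split models and commutes with reduction modulo $p^n$. It therefore passes to the limit and gives exactness for $\calO\widehat{\mathbb{C}}^+_{\mathrm{pd}}$ directly.
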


To compare the integral $p$-adic Simpson functor induced by different liftings, we will need an explicit local description of the period sheaf $\calO\widehat{\mathbb C}_{\mathrm{pd}}^+$ on small affine opens.

Following \cite[Convention 2.4]{MinWang2024}, an affine formal scheme $\mathfrak{U} = \mathrm{Spf}(R)$ over $\mathcal{O}_{\mathbf{C}}$ of relative dimension $d$ is called small if there is an \'etale morphism $\square : \calO_{\bfC}\langle T_1^{\pm 1}, \dots, T_d^{\pm 1}\rangle \to R$. 
Such an \'etale morphism $\square$ is called a toric chart on $\mathfrak{U}$. 
In this case, we can deduce from the smoothness of $R$ that, up to isomorphisms, there is a unique $A_2$-lifting $\widetilde{\frakU} = \mathrm{Spf}(\widetilde{R})$ of $\frakU$. 
By the \'etaleness of $\square$, there exists a unique $A_2$-morphism $A_2\langle T_1^{\pm 1}, \dots, T_d^{\pm 1}\rangle \to \widetilde{R}$ lifting $\square$.

Let $U$ be the generic fiber of $\mathfrak{U}$ and let $U_\infty$ be the base-change of $U$ along the morphism
\[
\begin{aligned}
&\mathrm{Spa}(\bfC\langle T_1^{\pm 1/p^\infty}, \dots, T_d^{\pm 1/p^\infty}\rangle, \calO_{\bfC}\langle T_1^{\pm 1/p^\infty}, \dots, T_d^{\pm 1/p^\infty}\rangle) \\
&\longrightarrow \mathrm{Spa}(\bfC\langle T_1^{\pm 1}, \dots, T_d^{\pm 1}\rangle, \calO_{\bfC}\langle T_1^{\pm 1}, \dots, T_d^{\pm 1}\rangle).
\end{aligned}
\]

Then $U_\infty$ is a perfectoid space such that $U_\infty \to U$ is a pro-\'etale Galois cover with Galois group
\[
\Gamma \cong \mathbb{Z}_p\gamma_1 \oplus \cdots \oplus \mathbb{Z}_p\gamma_d,
\]
where for any $1 \leqslant i,j \leqslant d$ and any $n \geqslant 1$, $\gamma_i$ is determined by sending $T_j^{1/p^n}$ to $\zeta_{p^n}^{\delta_{ij}} T_j^{1/p^n}$ and $\delta_{ij}$ denotes Kronecker's $\delta$-function. 

\begin{lemma}[{\cite[Lemma 2.5]{MinWang2024}}]\label{lem:local-description-of-int-Faltings-ext}
    Keep the notations above. There exist sections $\rho y_1, \dots, \rho y_d$ of $\mathcal{E}^+|_{U_{\infty}} $ lifting $\frac{\rho}{t}\mathrm{d}\log T_1, \dots ,\frac{\rho}{t}\mathrm{d}\log T_d$ via the projection
    \[ \calE^+|_{U_{\infty}} \to \widehat{\calO }_{U_{\infty}}^+ \widehat{\otimes}_{\calO _{\frakU }} \rho\widehat{\Omega}^1_{\frakU }(-1)\]
    such that as an $\widehat{\calO }_{U_{\infty}}^+$-module, $\calE^+|_{U_{\infty}} \cong \widehat{\calO }_{U_{\infty}}^+ e \oplus (\bigoplus_{i=1}^d \widehat{\calO }_{U_{\infty}}^+ \rho y_i)$ and for any $1 \leqslant i,j \leqslant d$, $\gamma_i(\rho y_j) = \rho y_j + \rho\delta_{ij}e$.
\end{lemma}

\begin{lemma}[{\cite[Corollary 2.6]{MinWang2024}}]\label{lem:local-description-of-OCpd}
    Keep the notations above. There exists an isomorphism of $\widehat{\mathcal{O}}_U^+$-algebras
    \[ \iota : \widehat{\mathcal{O}}_{U}^{+}[\rho Y_1, \dots, \rho Y_d]^{\wedge}_{\mathrm{pd}}\big|_{U_\infty} \xrightarrow[]{\sim} \mathcal{O}\widehat{\mathbb{C}}_{\mathrm{pd}}^+\big|_{U_\infty} \]
    by identifying $\rho Y_i$'s with the images of $\rho y_i$'s via the composition $\calE^+ \to \Gamma(\calE^+) \to \mathcal{O}\widehat{\mathbb{C}}_{\mathrm{pd}}^+$.
    Via this isomorphism, the Higgs field $\nabla$ on $\mathcal{O}\widehat{\mathbb{C}}_{\mathrm{pd}}^+\big|_{U_\infty}$ is given by 
    \[ \nabla = \sum_{i=1}^d \frac{\partial}{\partial Y_i} \otimes \frac{\mathrm{d}\log T_i}{t} : \mathcal{O}\widehat{\mathbb{C}}_{\mathrm{pd}}^+\big|_{U_\infty} \to \mathcal{O}\widehat{\mathbb{C}}_{\mathrm{pd}}^+\big|_{U_\infty} \widehat{\otimes} \rho\widehat{\Omega}_{\frakU }^1(-1) = \bigoplus_{i=1}^d \mathcal{O}\widehat{\mathbb{C}}_{\mathrm{pd}}^+\big|_{U_\infty} \cdot \rho \frac{\mathrm{d}\log T_i}{t}\]
    by identifying $\widehat{\Omega}_{\frakU }^1(-1)$ with $\bigoplus_{i=1}^d \calO _{\frakU }\cdot \frac{\mathrm{d}\log T_i}{t}$.
\end{lemma}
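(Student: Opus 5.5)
The plan is to compute everything on $U_\infty$, where Lemma \ref{lem:local-description-of-int-Faltings-ext} says $\calE^+$ is free, and then to read off both the algebra structure and the Higgs field from the universal property of the free pd-algebra. First I would fix the basis $e,\rho y_1,\dots,\rho y_d$ of $\calE^+|_{U_\infty}$ given by Lemma \ref{lem:local-description-of-int-Faltings-ext}. The functor $\Gamma$ sends direct sums to tensor products, and $\Gamma$ of a free rank-one module $\widehat\calO^+ v$ is $\widehat\calO^+[v]_{\mathrm{pd}}$. Hence there is a canonical isomorphism
\[
\Gamma(\calE^+)|_{U_\infty}\cong \widehat\calO^+_{U_\infty}[e]_{\mathrm{pd}}\otimes_{\widehat\calO^+_{U_\infty}}\widehat\calO^+_{U_\infty}[\rho y_1,\dots,\rho y_d]_{\mathrm{pd}}.
\]
Under this isomorphism the structure map $\Gamma(\widehat\calO^+_X)\to\Gamma(\calE^+)$ is the inclusion of the first tensor factor, since it is induced by $\widehat\calO^+_X\to\calE^+$, $1\mapsto e$.

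Next I would base change along $\Gamma(\widehat\calO^+_X)\to\Gamma(\widehat\calO^+_X)/\calI_{\mathrm{pd}}\cong\widehat\calO^+_X$, which sends $e\mapsto\rho$ and $(e-\rho)^{[n]}\mapsto 0$. Because the first factor is exactly $\Gamma(\widehat\calO^+_X)$, this gives
\[
\calO\mathbb C^+_{\mathrm{pd}}|_{U_\infty}\cong \widehat\calO^+_{U_\infty}[\rho y_1,\dots,\rho y_d]_{\mathrm{pd}}.
\]
Passing to $p$-adic completions yields $\iota$, with $\rho Y_i\mapsto$ the image of $\rho y_i$.

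For the Higgs field, recall that $\partial$ in Lemma \ref{lem:Quillen-Illusie} is the Koszul differential of a $\Gamma$-complex. In degree zero it is the unique pd-derivation $\Gamma(\calE^+)\to\Gamma(\calE^+)\otimes\rho\widehat\Omega^1_{\frakX}(-1)$ extending the projection $\calE^+\to\widehat\calO^+_X\widehat\otimes\rho\widehat\Omega^1_{\frakX}(-1)$. By Lemma \ref{lem:local-description-of-int-Faltings-ext}, this projection sends $e\mapsto 0$ and $\rho y_i\mapsto\rho\,\mathrm d\log T_i/t$. The pd-derivation rule then gives
\[
\partial\bigl((\rho y_i)^{[n]}\bigr)=(\rho y_i)^{[n-1]}\otimes\rho\,\frac{\mathrm d\log T_i}{t}.
\]
Since $\partial(e)=0$, $\partial$ is $\Gamma(\widehat\calO^+_X)$-linear and therefore descends to the base change. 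On the other side, $\frac{\partial}{\partial Y_i}(\rho Y_i)^{[n]}=\rho(\rho Y_i)^{[n-1]}$, so $\frac{\partial}{\partial Y_i}\otimes\frac{\mathrm d\log T_i}{t}$ agrees with $\nabla$ on all divided-power monomials. Both operators are continuous, so they agree after completion.

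The main obstacle I expect is justifying the description of $\partial$. One has to check that the differential in the Quillen–Illusie sequence of \cite{MinWang2024} really is the pd-derivation induced by the projection $\calE^+\to\widehat\calO^+_X\widehat\otimes\rho\widehat\Omega^1_{\frakX}(-1)$, rather than something that merely agrees with it up to sign or normalization. One also has to keep track of the factors of $\rho$ and of the Tate twist $(-1)$, so that no stray $\rho$ appears in the final formula. I would settle this by unwinding the construction in \cite[Lemma 2.1]{MinWang2024} on the free module $\calE^+|_{U_\infty}$, where it reduces to the standard Koszul complex of a divided-power polynomial algebra. A minor point is that $\iota$ is defined only after restriction to $U_\infty$ and depends on the chosen lifts $\rho y_i$. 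This is harmless, since the statement is only made there.
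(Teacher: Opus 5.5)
Your proposal is correct and follows essentially the route behind the cited result. The paper gives no proof of its own and simply quotes \cite[Corollary 2.6]{MinWang2024}, which follows directly from four facts: the freeness of $\calE^+|_{U_\infty}$ on $e,\rho y_1,\dots,\rho y_d$ (Lemma \ref{lem:local-description-of-int-Faltings-ext}); the identification of $\Gamma$ of a free module with a divided-power polynomial algebra; base change along $e\mapsto\rho$; and the fact that $\partial$ in Lemma \ref{lem:Quillen-Illusie} is the Illusie--Koszul differential, i.e.\ the pd-derivation extending the projection $\calE^+\to\widehat\calO^+_X\widehat\otimes\rho\widehat\Omega^1_{\frakX}(-1)$. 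Your check that $\frac{\partial}{\partial Y_i}(\rho Y_i)^{[n]}=\rho(\rho Y_i)^{[n-1]}$ confirms that no stray factor of $\rho$ appears, so the ``obstacle'' you flag amounts to unwinding that definition rather than a genuine gap.
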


Recall the definition of Hitchin-small Higgs bundles: 

\begin{definition}[{\cite[Definition 4.2]{MinWang2024} \& \cite[Definition 3.2]{anschutz2023smallpadicsimpsoncorrespondence}}]
    Assume $a \geqslant \frac{1}{p-1}$. 
    By an $a$-Hitchin small Higgs bundle of rank $r$ on $\mathfrak{X}_{\mathrm{\acute{e}t}}$, we mean a pair $(\mathcal{H}, \theta)$ of a locally finite free $\mathcal{O}_{\mathfrak{X}}$-module $\mathcal{H}$ of rank $r$ and an $\mathcal{O}_{\mathfrak{X}}$-linear morphism $\theta : \mathcal{H} \to \mathcal{H} \otimes_{\mathcal{O}_{\mathfrak{X}}} \rho\widehat{\Omega}_{\mathfrak{X}}^1(-1)$ satisfying $\theta \wedge \theta = 0$ such that
    for any $\partial\in \rho^{-1}p^{\frac{1}{p-1}-a}(\widehat{\Omega}^1_{\mathfrak{X}}(-1))^{\vee}$, the action of $\partial$ on $\mathcal{H}$ is topologically nilpotent.
\end{definition}

\begin{remark}
    This definition interpolates between those of Min--Wang (which is essentially Faltings' smallness) and Anschütz--Heuer--Le Bras, with the latter using the Breuil--Kisin twist instead of Tate twist. Hence, they differ from each other by a factor of $(\zeta_p-1)$. 
\end{remark}

Using the period ring $\calO \widehat{\mathbb{C}}_{\mathrm{pd}}^{+}$, 
\cite{MinWang2024} constructed the following integral $p$-adic Simpson correspondence.

\begin{theorem}[{\cite[Theorem 1.1]{MinWang2024}}]\label{thm:main-thm-of-MinWang}
    Assume $a\geqslant  \frac{1}{p-1}$. 
    \begin{enumerate}
        \item For any $a$-Hitchin small $\widehat{\calO }^+_{X}$-representation $\mathcal{L}$ of rank $r$ on $X_{\mathrm{pro\acute{e}t}}$,  
        $(\nu_{*}(\mathcal{L} \otimes_{\widehat{\calO }^+_{X}}\calO \widehat{\mathbb{C}}_{\mathrm{pd}}^{+}), \nu_{*}(1\otimes \nabla))$ 
        is an $a$-Hitchin small Higgs bundle of rank $r$ on $\frakX_{\mathrm{\acute{e}t}}$. 
        \item For any  $a$-Hitchin small Higgs bundle $(\mathcal{H},\theta)$ of rank $r$ on $\frakX_{\mathrm{\acute{e}t}}$, 
        $(\mathcal{H} \otimes_{\calO _{\frakX}}\calO \widehat{\mathbb{C}}_{\mathrm{pd}}^{+})^{\theta\otimes 1+ 1\otimes \nabla=0}$ 
        is an $a$-Hitchin small $\widehat{\calO }^+_{X}$-representation of rank $r$ on $X_{\mathrm{pro\acute{e}t}}$. 
        \item The functors in (1) and (2) are quasi-inverse to each other and hence define an equivalence of categories 
        \[ \mathfrak{S}_{\widetilde{\frakX}} : 
        \{\text{$a$-Hitchin small $\widehat{\mathcal{O}}_X^+$-representations on $X_{\mathrm{pro\acute{e}t}}$} \} 
        \xrightarrow{\sim} \{\text{$a$-Hitchin small Higgs bundles on $\mathfrak{X}$}\}.\]
    \end{enumerate}
\end{theorem}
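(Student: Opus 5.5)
The plan is to prove the three assertions first locally on small affine opens, where everything becomes an explicit computation with $\Gamma$-representations, and then to glue. Both functors are built from $\calO\widehat{\mathbb C}_{\mathrm{pd}}^+$ and $\nabla$, which are defined globally on $X_{\proet}$ and are functorial in étale maps of $\frakX$. So it suffices to check two things: that the outputs are locally finite free of rank $r$ and $a$-Hitchin small, and that the natural adjunction maps
\[
\mathcal L\to \bigl(\nu^*\nu_*(\mathcal L\otimes\calO\widehat{\mathbb C}_{\mathrm{pd}}^+)\otimes\calO\widehat{\mathbb C}_{\mathrm{pd}}^+\bigr)^{\nabla=0},\qquad
\calH\to\nu_*\bigl((\calH\otimes\calO\widehat{\mathbb C}_{\mathrm{pd}}^+)^{\theta\otimes1+1\otimes\nabla=0}\otimes\calO\widehat{\mathbb C}_{\mathrm{pd}}^+\bigr)
\]
are isomorphisms. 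Both statements are local on $\frakX_\et$, so we may assume $\frakX=\frakU=\Spf(R)$ is small with a toric chart, and work on the Galois cover $U_\infty\to U$ with group $\Gamma\cong\bZ_p^d$.

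\textbf{From Higgs bundles to representations.} Let $(\calH,\theta)$ be given, and write $\theta=\sum_i\theta_i\otimes\rho\frac{\mathrm d\log T_i}{t}$. By Lemma \ref{lem:local-description-of-OCpd}, over $U_\infty$ the sheaf $\calH\otimes\calO\widehat{\mathbb C}_{\mathrm{pd}}^+$ is $\calH\otimes\widehat{\calO}^+[\rho Y_1,\dots,\rho Y_d]^\wedge_{\mathrm{pd}}$, with operator $\theta_i+\partial/\partial Y_i$ in each direction. I would show that its kernel is the image of
\[
\calH\otimes\widehat{\calO}^+_{U_\infty}\;\xrightarrow{\;\exp(-\sum_i\theta_iY_i)\;}\;\calH\otimes\calO\widehat{\mathbb C}_{\mathrm{pd}}^+ ,
\]
where $\exp(-\sum\theta_iY_i)=\sum_n(-\sum\rho\theta_i/\rho)^{n}\,Y^{[n]}$-type expressions are understood via divided powers of the $\rho Y_i$. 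The $a$-Hitchin smallness, meaning that $\rho^{-1}p^{\frac1{p-1}-a}\theta_i$ is topologically nilpotent, is exactly what makes this series converge in the $p$-adically completed pd-algebra. The Galois action $\gamma_j(\rho Y_i)=\rho Y_i+\rho\delta_{ij}$ from Lemma \ref{lem:local-description-of-int-Faltings-ext} then shows that the resulting $\widehat\calO^+_{U_\infty}$-module with semilinear $\Gamma$-action has $\gamma_j$ acting by $\exp(-\rho\theta_j)$ on $\calH\otimes\widehat\calO^+_{U_\infty}$. This representation is finite free of rank $r$ and $a$-Hitchin small. Descending along $U_\infty\to U$ gives an $\widehat\calO^+_X$-representation on $U$, which proves (2) locally.

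\textbf{From representations to Higgs bundles.} Conversely, let $\mathcal L$ be $a$-Hitchin small. I would use the integral decompletion / Sen-type theory available in this setting: over $U_\infty$, $\mathcal L$ descends to a finite free $R$-module $M$ with $\Gamma$ acting through operators $\gamma_j$ for which $\log\gamma_j$ converges and is divisible by $\rho$ with the required smallness. Setting $\theta_j:=-\rho^{-1}\log\gamma_j$ on $M$, the same exponential formula identifies $\mathcal L\otimes\calO\widehat{\mathbb C}_{\mathrm{pd}}^+$ with $M\otimes\calO\widehat{\mathbb C}_{\mathrm{pd}}^+$ compatibly with $\nabla$. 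Then $\nu_*$ is computed by the continuous $\Gamma$-cohomology in degree $0$, via the almost purity / almost acyclicity of $\widehat\calO^+_{U_\infty}$. Together with the Poincaré lemma (Lemma \ref{lem:Poincare-lem-for-OCpd}), this gives $\nu_*(\mathcal L\otimes\calO\widehat{\mathbb C}_{\mathrm{pd}}^+)\cong M$ with Higgs field $\theta$. This proves (1) locally, and the same computation shows that the two adjunction maps are isomorphisms, which is (3).

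\textbf{Gluing and the main obstacle.} The local constructions are canonical, being kernels and pushforwards of globally defined objects, so they glue. The independence of the choice of toric chart is automatic, since the functors never referenced the chart. I expect the main obstacle to be the integral descent step: showing that a Hitchin-small $\widehat\calO^+$-representation comes, honestly rather than only up to almost isomorphism and bounded $p$-power torsion, from a finite free $R$-module with small $\Gamma$-action. Controlling this loss precisely is where the bound $a\ge\frac1{p-1}$ and the factor $\rho=\zeta_p-1$ enter. I would attack it by successive approximation modulo $p^n$, using the vanishing of higher $\Gamma$-cohomology of the nonintegral-weight part of $\widehat\calO^+_{U_\infty}$ up to $\rho$-bounded torsion.
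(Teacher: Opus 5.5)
The paper gives no proof of this statement. It is quoted from \cite[Theorem~1.1]{MinWang2024}, and the only trace of that proof here is the local description used in Lemma~\ref{lem:action-of-lift}: on a small chart, a Hitchin-small Higgs module is written as $H=\exp(\sum_i\Theta_iY_i)(V)$ for a Hitchin-small $R$-representation $V$ of $\Gamma$. Your architecture matches that: work on $U_\infty$, untwist by an exponential inside $\widehat{\calO}^+[\rho Y_1,\dots,\rho Y_d]^\wedge_{\mathrm{pd}}$, and use that both functors are globally defined, so only local verifications are needed. The Higgs-to-representation half is essentially correct once the normalizations are fixed. With $\theta=\sum_i\theta_i\otimes\rho\frac{\mathrm{d}\log T_i}{t}$:
\begin{itemize}
\item the $i$-th component of $\theta\otimes1+1\otimes\nabla$ is $\rho\theta_i+\partial/\partial Y_i$, not $\theta_i+\partial/\partial Y_i$;
\item $a$-Hitchin-smallness says that $p^{\frac1{p-1}-a}\theta_i$ (not $\rho^{-1}p^{\frac1{p-1}-a}\theta_i$) is topologically nilpotent;
\item $\exp(-\sum_i\rho\theta_iY_i)=\sum_{n\in\mathbb N^d}(-\theta)^n(\rho Y)^{[n]}$ then converges because the $\theta_i$ are topologically nilpotent, consistent with your $\gamma_j=\exp(-\rho\theta_j)$.
\end{itemize}

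The genuine gap is the other direction, producing the $R$-module $M$ from $\mathcal L$. You flag it as the main obstacle, but the attack you propose cannot close it, for two reasons.

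\emph{Hitchin-smallness gives only a congruence modulo $\rho$.} It is a spectral condition, not a congruence. If $N$ is a nilpotent matrix over $R$ with a unit entry, then $\theta_j=N$ is $a$-Hitchin-small for every $a\geqslant\frac1{p-1}$. Yet on the natural basis $\gamma_j=\exp(-\rho N)$ is congruent to $1$ only modulo $\rho$. The $\Gamma$-cohomology of the non-integral part of $\widehat{\calO}^+_{U_\infty}$ is killed by $\rho$ and no better: on $R\,T^{k/p}$ with $p\nmid k$ the coboundary is multiplication by $\zeta_p^k-1$. So each approximation step loses a factor $\rho$, and successive approximation converges only from $\gamma_j\equiv1$ modulo $p^c$ with $c>\frac1{p-1}$.

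\emph{No argument using only the congruence modulo $\rho$ can succeed.} Take $d=1$, $p$ odd, $R=\calO_{\bfC}\langle T^{\pm1}\rangle$, and let $\gamma$ act on $\widehat{R}^+_\infty e$ by $\gamma(e)=(1+\rho T^{1/p})e$. Then $\gamma^p(e)=\prod_{k=0}^{p-1}(1+\rho\zeta_p^kT^{1/p})\,e=(1+\rho^pT)e$. Suppose this module were $V\otimes_R\widehat{R}^+_\infty$ with $\gamma$ acting on $V$ by some $c\in1+\rho R$.
\begin{itemize}
\item Then $\gamma^p(f)=(1+\rho^pT)c^{-p}f$ for some unit $f$.
\item $\gamma^p$ acts on each summand $T^{\alpha}R$ of $\widehat{R}^+_\infty$ by a $p$-power root of unity, while $(1+\rho^pT)c^{-p}\equiv1$ modulo $\rho^p$. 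Together these force $c^p=1+\rho^pT$.
\item Write $c=1+\rho y$ and use $p\equiv-\rho^{p-1}$ modulo $\rho^{p-1}\mathfrak m$. This gives $y^p-y\equiv T$ modulo $\mathfrak mR$, an Artin--Schreier equation with no solution in $(\calO_{\bfC}/\mathfrak m)[T^{\pm1}]$.
\end{itemize}
Indeed, its would-be Higgs field $-\rho^{-1}\frac1p\log(1+\rho^pT)\equiv T$ is not topologically nilpotent.

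So the missing idea is how topological nilpotence enters the descent. There are two possibilities. If the definition of $a$-Hitchin-small $\widehat{\calO}^+_X$-representations already builds in local descent to $a$-Hitchin-small $R$-representations of $\Gamma$, this step is definitional, and you must instead check that the output of (2) satisfies that definition. Otherwise you need a decompletion argument that genuinely uses the spectral condition.

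A smaller point concerns $\nu_*$. It is just $\Gamma$-invariants on $U_\infty$, and what you need is the exact identity $\bigl(\widehat{R}^+_\infty[\rho Y_1,\dots,\rho Y_d]^\wedge_{\mathrm{pd}}\bigr)^{\Gamma}=R$. Almost purity would give this only up to almost isomorphism, which is useless for an integral statement. The Poincar\'e lemma concerns $\nabla$, not $\Gamma$, so it does not supply it either.
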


\subsection{Transition formula and cohomological lift-independence}

Our main goal is to prove Theorem \ref{theo: int lift independ to quasi independ}, which is the precise form of Theorem \ref{theo: intro int lift independ to coho independ} stated in the introduction.

We begin with the key local calculation, which describes how the integral $p$-adic Simpson correspondence changes when the lifting is changed. This calculation has already been performed by Faltings \cite[\S 4]{Faltings_2005} (for curves) and \cite[Proposition 3.3.22]{abbes2026twistinghiggsmodulesfunctorial}.

We first fix notation for Higgs bundles described by descent data.

\begin{convention}
    Let $\frakX$ be a smooth formal scheme over $\calO _{\mathbf{C}}$ with generic fiber $X$. 
    Let $\{\frakU _{\lambda}=\Spf(R_{\lambda})\}_{\lambda \in \Lambda}$ be an affine open covering of $\frakX$. 
    Assume that there are $a$-Hitchin small Higgs $R_{\lambda}$-modules $(M_{\lambda},\theta_{\lambda})$ for all $\lambda \in \Lambda$, 
    and isomorphisms of Higgs modules 
    \[\varphi_{\lambda\mu} : R_{\lambda\mu}\otimes_{R_{\lambda}} (M_{\lambda},\theta_{\lambda}) \cong  R_{\lambda\mu}\otimes_{R_{\mu}} (M_{\mu},\theta_{\mu})\] 
    for all $\lambda,\mu \in \Lambda$, satisfying the following cocycle condition 
    \[ 1 \otimes \varphi_{\lambda \nu}= (1 \otimes \varphi_{\mu \nu})\circ (1 \otimes \varphi_{\lambda \mu}), \quad \forall \lambda,\mu,\nu \in \Lambda.\] 
    Then there is, up to unique isomorphism, a unique $a$-Hitchin-small Higgs bundle $(\mathcal{H},\theta)$ on $\frakX$, together with isomorphisms 
    \[ \varphi_{\lambda} : \Gamma(\frakU _{\lambda},(\mathcal{H},\theta)) \cong  (M_{\lambda},\theta_{\lambda}),\]
    such that 
    \[ 1 \otimes \varphi_{\mu} = \varphi_{\lambda\mu} \circ (1\otimes \varphi_{\lambda} ).\]
    We say that the Higgs bundle $(\mathcal{H},\theta)$ is given by the descent datum $((M_{\lambda},\theta_{\lambda}),\varphi_{\lambda\mu})$. 
\end{convention}

\begin{convention}
    Let $\frakX$ be a  smooth formal scheme over $\calO _{\mathbf{C}}$, and let $(\mathcal{H},\theta)$ be a Higgs bundle on $\frakX$. 
    Then the Higgs field $\theta : \calH \to \calH \otimes \rho \widehat{\Omega}_{\frakX}(-1)$ induces a map
    $$\rho^{-1}T_{\frakX}(1)  \xrightarrow{\mathrm{coev} \otimes 1} \mathcal{H}^\vee \otimes \mathcal{H} \otimes \rho^{-1} T_{\frakX}(1) \xrightarrow{1 \otimes \theta \otimes 1} \mathcal{H}^\vee \otimes \mathcal{H} \otimes \rho\widehat{\Omega}_{\frakX}(-1) \otimes \rho^{-1}T_{\frakX}(1)  \xrightarrow{1 \otimes \mathrm{ev}} \mathcal{H}^\vee \otimes \mathcal{H} \cong \EEnd(\mathcal{H}),$$
    which is also denoted by $\theta$ by abuse of notation. 
    The condition $\theta \wedge \theta = 0$ implies that the map 
    $\theta : \rho^{-1} T_{\frakX}(1) \to \EEnd(\mathcal{H})$ factors through the endomorphism sheaf $\EEnd(\mathcal{H},\theta)$ of the Higgs bundle $(\calH,\theta)$. 
    
    Similarly,  let $X$ be a  smooth rigid space over $\mathbf{C}$ and let $(\mathcal{H},\theta)$ be a Higgs bundle on $X$. 
    Then the Higgs field $\theta$ induces a map 
    \[ \theta : T_{X}(1) \to \EEnd(\mathcal{H},\theta).\]
\end{convention}

Recall that by standard deformation theory, when $\frakX$ admits a flat $A_{2}$-lifting, the isomorphism classes of flat $A_{2}$-liftings of $\frakX$ form a torsor under the first cohomology group $\mathrm{H}^1(\frakX,\rho^{-1}T_{\frakX}(1))$. 

\begin{lemma}\label{lem:action-of-lift}
    Let $\frakX$ be a smooth formal scheme over $\calO _{\mathbf{C}}$, 
    and let $\widetilde{\frakX}_{1}$ and $\widetilde{\frakX}_{2}$ be two flat $A_{2}$-liftings of $\frakX$. 
    Assume that the difference class $[\widetilde{\frakX}_{2}]-[\widetilde{\frakX}_{1}]$ is represented by the Čech $1$-cocycle $(a_{\lambda\mu}=\frac{t}{\rho} \delta_{\lambda\mu})$ valued in $\rho^{-1}T_{\frakX}(1)$. 

    Choose a covering $\{\frakU _{\lambda}\}$ of $\frakX$ consisting of small affine open subsets. 
    Let $(\mathcal{H},\theta)$ be an $a$-Hitchin small Higgs bundle on $\frakX$ given by the descent datum $((M_{\lambda},\theta_{\lambda}),\varphi_{\lambda\mu})$.
    
    Then the Higgs bundle $\mathfrak{S}_{\widetilde{\frakX}_{2}}(\mathfrak{S}^{-1}_{\widetilde{\frakX}_{1}}(\mathcal{H},\theta))$ is given by the descent datum
    \[ ((M_{\lambda},\theta_{\lambda}),\varphi_{\lambda\mu}\circ \exp(\rho\theta(a_{\lambda\mu}))). \]
\end{lemma}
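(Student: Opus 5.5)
The plan is to compute both functors locally on each small affine $\frakU_\lambda$ via the explicit description of $\calO\widehat{\mathbb C}^+_{\mathrm{pd}}$ in Lemma \ref{lem:local-description-of-OCpd}, and then to compare the resulting gluing data. The key input is how the local coordinates $\rho y_i$ of the integral Faltings extension change when the lifting changes.

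First, compare the two Faltings extensions. On a small affine $\frakU_\lambda$ the two liftings $\widetilde{\frakU}_{\lambda,1}$ and $\widetilde{\frakU}_{\lambda,2}$ are isomorphic, since small affines have a unique lifting up to isomorphism. Fix isomorphisms $\psi_{\lambda}:\widetilde{\frakU}_{\lambda,1}\xrightarrow{\sim}\widetilde{\frakU}_{\lambda,2}$ over $\frakU_\lambda$. On overlaps, $\psi_\mu^{-1}\psi_\lambda$ is an automorphism of the lifting. It is given by a derivation $\frac{t}{\rho}\delta_{\lambda\mu}$, i.e. $f\mapsto f+\xi\,\delta_{\lambda\mu}(\bar f)$ up to the identification of $\xi$ with $t$ and the Tate twist; this is exactly the cocycle $a_{\lambda\mu}$. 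By functoriality of the cotangent complex, $\psi_\lambda$ induces isomorphisms $\calE^+_{1}|_{U_\lambda}\cong\calE^+_{2}|_{U_\lambda}$ compatible with the extension structure. On overlaps these two isomorphisms differ by the map
\[
\calE^+\to\widehat\calO^+_X\otimes\rho\widehat\Omega^1(-1)\xrightarrow{\rho\, a_{\lambda\mu}}\widehat\calO^+_X\xrightarrow{\ \cdot e\ }\calE^+ .
\]
In coordinates this is $\rho y_i\mapsto \rho y_i+\rho\,\langle a_{\lambda\mu},\tfrac{\rho}{t}\mathrm{d}\log T_i\rangle e$. I expect this to follow from the standard description of how the connecting map of the cotangent triangle depends on the lifting. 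Passing to $\Gamma(-)$ and setting $e=\rho$ gives an isomorphism of period sheaves $\calO\widehat{\mathbb C}^+_{\mathrm{pd},1}\cong\calO\widehat{\mathbb C}^+_{\mathrm{pd},2}$ on $U_\lambda$ that commutes with $\nabla$. On overlaps the two such isomorphisms differ by the translation automorphism $Y_i\mapsto Y_i+c_i$, where $c_i$ is the pairing of $\rho a_{\lambda\mu}$ with $\frac{\mathrm d\log T_i}{t}$. This translation equals $\exp(\rho\,a_{\lambda\mu}\cdot\nabla)$ in the pd-polynomial ring, since $\nabla=\sum\partial_{Y_i}\otimes\frac{\mathrm d\log T_i}{t}$. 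I expect this identification of the cocycle discrepancy with $\exp$ of the derivation to be the main obstacle: it requires bookkeeping of the factors $\rho$ and $t$ and a check that the exponential series converges in the pd-completion. That convergence is exactly where Hitchin-smallness enters.

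Second, run the functors. Let $\calL=\frakS^{-1}_{\widetilde{\frakX}_1}(\calH,\theta)=(\calH\otimes\calO\widehat{\mathbb C}^+_{\mathrm{pd},1})^{\theta\otimes1+1\otimes\nabla=0}$. Applying $\frakS_{\widetilde{\frakX}_2}$ means taking $\nu_*(\calL\otimes\calO\widehat{\mathbb C}^+_{\mathrm{pd},2})$. Over $U_\lambda$, the isomorphism $\psi_\lambda$ identifies this with $\nu_*(\calL\otimes\calO\widehat{\mathbb C}^+_{\mathrm{pd},1})\cong M_\lambda$ as Higgs modules, by Theorem \ref{thm:main-thm-of-MinWang}(3). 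On overlaps, the transition becomes $\varphi_{\lambda\mu}$ composed with the action of $\exp(\rho\,a_{\lambda\mu}\cdot\nabla)$ on $\nu_*(\calL\otimes\calO\widehat{\mathbb C}^+_{\mathrm{pd},1})$. On horizontal sections $\nabla$ acts as $-\theta$ via $\theta\otimes 1+1\otimes\nabla=0$, up to a sign fixed by orientation conventions.

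This gives transition maps $\varphi_{\lambda\mu}\circ\exp(\rho\theta(a_{\lambda\mu}))$, after normalizing the sign convention for the difference class. The map $\exp(\rho\theta(a_{\lambda\mu}))$ commutes with $\theta$ because $\theta(a)\in\EEnd(\calH,\theta)$, so it is an isomorphism of Higgs modules. It converges by $a$-smallness with $a\geq\frac1{p-1}$. Finally, the cocycle condition holds because the $\theta(a_{\lambda\mu})$ commute with one another and $a$ is a cocycle, which completes the plan.
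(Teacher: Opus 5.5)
Your proposal is essentially the paper's proof. Both reduce to the local fact that the lift automorphism $\mathrm{id}+\frac{t}{\rho}\delta_{\lambda\mu}$ acts on $\calO\widehat{\mathbb C}^+_{\mathrm{pd}}|_{U_\infty}$ by the translation $Y_i\mapsto Y_i+\delta_{\lambda\mu}(\mathrm d\log T_i)$, and both then read off the action of this translation on $\nu_*(\calL\otimes\calO\widehat{\mathbb C}^+_{\mathrm{pd}})$ as $\exp(\rho\theta(a_{\lambda\mu}))$. The paper does this via the explicit presentation $H=\exp(\sum_i\Theta_iY_i)(V)$ from Min--Wang; you do it via the Taylor formula, translation $=\exp(\langle\rho a_{\lambda\mu},\nabla\rangle)$, which is the same computation. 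Note that the operator acting is $\nu_*(1\otimes\nabla)$, which by definition is the Higgs field $+\theta$, so your ``$\nabla=-\theta$ on horizontal sections'' step is not needed.

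There is one bookkeeping slip. On $\calE^+$ the correction is contraction with $a_{\lambda\mu}$ itself, not $\rho$ times it: $\rho y_i\mapsto\rho y_i+\langle a_{\lambda\mu},\frac{\rho}{t}\mathrm d\log T_i\rangle e=\rho y_i+\delta_{\lambda\mu}(\mathrm d\log T_i)e$. Only this version is consistent, after $e\mapsto\rho$, with the translation $Y_i\mapsto Y_i+\delta_{\lambda\mu}(\mathrm d\log T_i)$ that you use afterwards.
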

\begin{proof}
    As a smooth affine formal scheme admits a unique $A_{2}$-lift up to isomorphism, 
    the Higgs modules $(M_{\lambda},\theta_{\lambda})$ are unchanged, 
    and we only need to compute the transition functions gluing $\mathfrak{S}_{\widetilde{\frakX}_{2}}(\mathfrak{S}^{-1}_{\widetilde{\frakX}_{1}}(\mathcal{H},\theta))$. 
    
    Let $\widetilde{\frakU }_{\lambda}=\widetilde{\frakX}_{1}\big|\frakU _{\lambda}$ 
    and $\widetilde{\frakU }_{\lambda\mu}=\widetilde{\frakX}_{1}\big|\frakU _{\lambda\mu}$, where $\frakU _{\lambda\mu}=\frakU _{\lambda}\cap \frakU _{\mu}$. 
    Assume that $\widetilde{\frakU }_{\lambda}=\Spf(\widetilde{R}_{\lambda})$ and $\widetilde{\frakU }_{\lambda\mu}=\Spf(\widetilde{R}_{\lambda\mu})$. 
    Then $\widetilde{\frakX}_{2}$ is obtained by gluing those $\widetilde{\frakU }_{\lambda}$ along the overlaps $\widetilde{\frakU }_{\lambda\mu}$ 
    via the transition functions $\widetilde{\frakU }_{\lambda\mu}\cong  \widetilde{\frakU }_{\mu\lambda}$ induced by 
    \[ \mathrm{id} + \frac{t}{\rho}\delta_{\lambda\mu} : \widetilde{R}_{\lambda\mu} \cong  \widetilde{R}_{\mu\lambda}.\]
    The computation of transition functions reduces to the following calculation:

    Let $\frakU  = \Spf(R)$ be a small affine smooth formal scheme over $\mathcal{O}_{\mathbf{C}}$, 
    let $\widetilde{\frakU }=\Spf(\widetilde{R})$ be an $A_{2}$-lifting of $\frakU $, 
    and let $\delta \in \operatorname{Der}_{\calO _{\mathbf{C}}}(R)$ be a (continuous) derivation. 
    Consider the automorphism of $\calO \widehat{\mathbb{C}}^{+}_{\mathrm{pd}}$ on $U_{\infty}$ 
    induced by the automorphism $\mathrm{id} + \frac{t}{\rho}\delta : \widetilde{R} \cong  \widetilde{R}$. 

    By Lemma \ref{lem:local-description-of-int-Faltings-ext}, the choice of a toric chart induces a splitting 
    \[ \mathcal{E}^+_{\widetilde{\frakU }}|_{U_{\infty}} \cong \widehat{\calO }^+_{U_{\infty}}e \oplus \left(\bigoplus_{i=1}^{d} \widehat{\calO }^+_{U_{\infty}}\rho y_{i}\right),\]
    where $\rho y_{i}$ are sent to $\frac{\rho\mathrm{d}\log T_{i}}{t}$. 
    The automorphism of $\mathcal{E}^+_{\widetilde{\frakU }}|_{U_{\infty}}$ induced by $\mathrm{id} + \frac{t}{\rho}\delta$ is given by  
    \[\rho y_{i} \mapsto \rho y_{i} + \delta(\mathrm{d}\log T_{i}) e.\] 
    By Lemma \ref{lem:local-description-of-OCpd}, we obtain an isomorphism 
    \[ \calO \widehat{\mathbb{C}}^+_{\mathrm{pd}}|_{U_{\infty}} \cong  \widehat{\calO }^+_{U}[\rho Y_{1},\cdots, \rho Y_{d}]_{\mathrm{pd}}^{\wedge}|_{U_{\infty}}\]
    sending $\rho y_{i}$ to $\rho Y_{i}$. 
    Consequently, the automorphism of $\widehat{\calO }^+_{U}[\rho Y_{1},\cdots,\rho Y_{d}]_{\mathrm{pd}}^{\wedge}|_{U_{\infty}}$ 
    induced by $\mathrm{id} + \frac{t}{\rho}\delta$ is given by  
    \[\rho Y_{i} \mapsto \rho Y_{i} + \rho\delta(\mathrm{d}\log T_{i}).\] 

    We now translate this coordinate change into its effect on a Higgs module. 
    Let $(H,\theta)$ be an $a$-Hitchin small Higgs $R$-module. 
    By the proof of \cite[Theorem 4.11]{MinWang2024}, 
    there exists an $a$-Hitchin small $R$-representation $V$ of $\Gamma$ and commuting $a$-Hitchin small endomorphisms $\Theta_{i}$ of $V$, 
    such that 
    \[ H= \exp\left(\sum_{i=1}^{d}\Theta_{i}Y_{i} \right)(V) \subset V \otimes_{R} R[\rho Y_{1},\cdots,\rho Y_{d}]^{\wedge}_{\mathrm{pd}},\]
    and 
    \[ \theta = \sum_{i=1}^{d} \Theta_{i} \otimes \frac{\mathrm{d}\log T_{i}}{t}.\]
    Thus the change of variables $\rho Y_{i} \mapsto \rho Y_{i} + \rho\delta(\mathrm{d}\log T_{i})$ 
    induces an automorphism of $V \otimes_{R} R[\rho Y_{1},\cdots,\rho Y_{d}]^{\wedge}_{\mathrm{pd}}$, which sends $H$ to
    \[ \exp\left(\sum_{i=1}^{d}\Theta_{i}(Y_{i}+ \delta(\mathrm{d}\log T_{i}))\right)(V) 
    = \exp\left(\sum_{i=1}^{d} \delta(\mathrm{d}\log T_{i}) \Theta_{i}\right)(H)
    = \exp\left(\rho\theta(\frac{t}{\rho}\delta) \right)(H).\]
    So we obtain the desired transition functions.
\end{proof}

The local calculation in the integral case has a rational analogue, which we record although it will not be used in the proof below.

\begin{remark}\label{rmk:rat-transition}
    Let $X$ be a proper smooth rigid space over $\bfC$. 
    Assume that there are smooth $B_{2}$-liftings $\widetilde{X}_{1}$ and $\widetilde{X}_{2}$ of $X$. 
    Let $(\calH,\theta)$ be a nilpotent Higgs bundle on $X$.  
    Assume that there is a covering $\{U_{\lambda}\}$ of $X$, 
    consisting of affinoid opens, 
    and $(\calH,\theta)$ is given by the descent datum $((M_{\lambda},\theta_{\lambda}),\varphi_{\lambda\mu})$. 
    
    Assume that $[\widetilde{X}_{2}]-[\widetilde{X}_{1}]$ is represented by the Čech $1$-cocycle $(a_{\lambda\mu})$ valued in $T_{X}(1)$. Fix an exponential $\Exp$ of $\mathbf{C}$.  
    Then a similar calculation shows that the Higgs bundle $S_{\Exp,\widetilde{X}_{2}}^{-1}(S_{\Exp,\widetilde{X}_{1}}(\mathcal{H},\theta))$ is given by the descent datum
    \[ ((M_{\lambda},\theta_{\lambda}),\varphi_{\lambda\mu}\circ \exp(\theta(a_{\lambda\mu}))). \] 
    Here $S_{\Exp,\widetilde{X}_{i}}$ is the $p$-adic Simpson functor constructed in \cite{heuer2023padic}. 
\end{remark}

We now turn the preceding local calculation into the global notions of lift-independence.

\begin{definition}\label{def:int-lift-indep-Higgs}
    Let $\frakX$ be a smooth formal scheme over $\calO_{\bfC}$ that admits a flat $A_{2}$-lifting. 
    We say that a $\frac{1}{p-1}$-Hitchin small Higgs bundle $(\calH, \theta)$ on $\frakX$ is \textit{integrally lift-independent} if for any two flat $A_2$-liftings $\widetilde{\frakX}_1$ and $\widetilde{\frakX}_2$ of $\frakX$, there exists an isomorphism 
    \[ \mathfrak{S}_{\widetilde{\frakX}_1}^{-1}(\calH, \theta) \cong  \frakS_{\widetilde{\frakX}_2}^{-1}(\calH, \theta) \]
    of $\frac{1}{p-1}$-Hitchin small $\widehat{\calO}_X^+$-representations on $X_{\text{proét}}$. 
\end{definition}

\begin{definition}\label{def:rat-lift-indep-Higgs}
    Let $X$ be a smooth proper rigid space over $\bfC$ and fix an exponential $\operatorname{Exp}$ for $\bfC$. 
    We say that a Higgs bundle $(\calH, \theta)$ on $X$ is \textit{lift-independent} if for any two flat $B_2$-liftings $\widetilde{X}_1$ and $\widetilde{X}_2$ of $X$, there is an isomorphism 
    \[ S_{\Exp,\widetilde{X}_1}(\calH, \theta) \cong  S_{\Exp,\widetilde{X}_2}(\calH, \theta) \]
    of pro-étale vector bundles on $X$.
\end{definition}

\begin{remark}
    By \cite[Proposition 7.4.4]{Guo2023}, a smooth proper rigid space over $\bfC$ always admits a flat $B_{2}$-lifting. 
\end{remark}

The following two lemmas provide the key technical tool for this section, showing how an isomorphism between Higgs fields on a single fiber extends to a neighborhood in the base.

\begin{lemma}\label{lem:fiberwise_trivial_vector_bundle_rigid}
    Let $X$ be a proper rigid variety over $\bfC$, and let $S$ be a reduced
    rigid variety over $\bfC$. Let $\calE$ be a vector bundle on $X$ and
    $\calF$ a vector bundle on $X\times S$.
    Assume that $\calF(s)\cong\calE$ for every $s\in S(\bfC)$.
    Then, for every $s_0\in S(\bfC)$ and every isomorphism
    $\varphi_0:\calE\xrightarrow{\sim}\calF(s_0)$, there are an open
    neighborhood $U$ of $s_0$ and an isomorphism
    \[
        \varphi:(p^*\calE)|_{X\times U}\xrightarrow{\sim}\calF|_{X\times U}
    \]
    extending $\varphi_0$, where $p:X\times S\to X$ is the projection.
\end{lemma}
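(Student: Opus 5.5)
The plan is to work with the coherent sheaf $\calG:=\HHom_{X\times S}(p^*\calE,\calF)$, which is a vector bundle on $X\times S$, and its pushforward along the proper map $q:X\times S\to S$. The fiber of $\calG$ at $s$ is $\HHom_X(\calE,\calF(s))$. By the hypothesis $\calF(s)\cong\calE$, the dimension
\[
h^0\bigl(X,\calG(s)\bigr)=\dim_{\bfC}\End(\calE)
\]
is constant in $s\in S(\bfC)$. I would then invoke the rigid analytic version of Grauert's theorem (Kiehl's properness theorem together with cohomology and base change for proper morphisms of rigid spaces). Since $S$ is reduced and $s\mapsto h^0(X,\calG(s))$ is constant, $q_*\calG$ is locally free on $S$, and the formation of $q_*\calG$ commutes with base change to points. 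In particular, the natural map $(q_*\calG)(s_0)\to \Hom(\calE,\calF(s_0))$ is an isomorphism.

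Next I would lift $\varphi_0$. Take an affinoid neighborhood $V$ of $s_0$ on which $q_*\calG$ is free. Then $\varphi_0\in\Hom(\calE,\calF(s_0))=(q_*\calG)(s_0)$ lifts to a section $\varphi\in\Gamma(V,q_*\calG)=\Hom_{X\times V}(p^*\calE,\calF|_{X\times V})$ whose restriction to the fiber over $s_0$ is $\varphi_0$.

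It remains to shrink $V$ so that $\varphi$ becomes an isomorphism. Both bundles have the same rank, so consider the locus $Z\subset X\times V$ where $\det\varphi$, a section of $\det(p^*\calE)^{\vee}\otimes\det\calF$, vanishes. This $Z$ is a Zariski closed analytic subset, and $Z$ is disjoint from $X\times\{s_0\}$ because $\varphi_0$ is an isomorphism. Since $q$ is proper, $q(Z)$ is a closed analytic subset of $V$ (Kiehl's proper mapping theorem), and it does not contain $s_0$. So $U:=V\setminus q(Z)$ is an open neighborhood of $s_0$, and $\varphi|_{X\times U}$ is an isomorphism: it is a map of vector bundles of equal rank that is injective on every fiber. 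If one prefers to avoid the proper mapping theorem, properness of $q$ on topological spaces gives that $q(Z)$ is closed, which already suffices for openness of $U$.

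The main obstacle is the base-change step. One needs the rigid analytic Grauert statement that constant $h^0$ on a reduced base gives local freeness of $q_*\calG$ and surjectivity of the restriction to the fiber. The same would hold with constant $h^1$, but only $h^0$ is controlled here. I would argue it through the semicontinuity formalism: locally on affinoid $V$, represent $Rq_*\calG$ by a bounded complex $K^\bullet$ of finite free modules (Kiehl), so that $\calG(s)$-cohomology is the cohomology of $K^\bullet\otimes\kappa(s)$. Then $h^0(s)=\dim\ker(d^0(s))$, so constancy means $\rk d^0(s)$ is constant. Over a reduced base, constant rank makes $\operatorname{coker} d^0$ and the image locally free, so $\ker d^0=q_*\calG$ is locally free and commutes with base change. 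The reducedness of $S$ is used exactly at this point.
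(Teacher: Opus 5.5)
Your proposal is correct and is essentially the paper's proof: apply the rigid-analytic Grauert theorem to $q_*\HHom(p^*\calE,\calF)$ using the constancy of $h^0$, lift $\varphi_0$ to a section over a neighborhood of $s_0$, and then remove $q(Z)$, where $Z$ is the closed non-isomorphism locus. One point deserves to be made explicit: your Kiehl-complex sketch of the base-change step silently uses that $\HHom(p^*\calE,\calF)$ is $S$-flat (it is locally free on $X\times S$, which is flat over $S$), whereas the paper states this flatness and then simply cites Grauert's theorem at that step.
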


\begin{proof}
    Let $q:X\times S\to S$ be the other projection and set
    $\calH=\HHom(p^*\calE,\calF)$. Since $q$ is proper and flat and $\calH$
    is locally free, $\calH$ is $S$-flat. Moreover,
    \[
        h^0(X,\calH(s))=h^0(X,\EEnd(\calE))
    \]
    for every $s\in S(\bfC)$. Grauert's theorem
    \cite[Proposition~3.15]{Heuer_HodgeTatespectralsequence} therefore shows
    that $q_*\calH$ is locally free and that
    \[
        (q_*\calH)(s)\xrightarrow{\sim}
        \rH^0(X,\calH(s))=\Hom(\calE,\calF(s)).
    \]
    Hence $\varphi_0$ lifts, after shrinking around $s_0$, to a section of
    $q_*\calH$, and thus to a morphism
    $\varphi:(p^*\calE)|_{X\times U}\to\calF|_{X\times U}$.
    Its non-isomorphism locus $Z\subset X\times U$ is a closed analytic subset
    disjoint from $X\times\{s_0\}$. Since $q$ is proper, $q(Z)$ is closed;
    replacing $U$ by $U\setminus q(Z)$ makes $\varphi$ an isomorphism.
\end{proof}

\begin{definition}
    Let $X$ be a smooth rigid variety over $\bfC$, let $S$ be a rigid variety
    over $\bfC$, and let $p:X\times S\to X$ be the projection. An $S$-family
    of Higgs bundles on $X$ is a pair $(\calF,\Theta)$ consisting of a vector
    bundle $\calF$ on $X\times S$ and an $\calO_{X\times S}$-linear morphism
    $\Theta:\calF\to\calF\otimes p^*\Omega_X^1(-1)$ satisfying
    $\Theta\wedge\Theta=0$.
\end{definition}

\begin{lemma}\label{lem:fiberwise_trivial_Higgs_bundle_rigid}
    Let $X$ be a proper smooth rigid variety over $\bfC$, and let $S$ be a
    smooth rigid variety over $\bfC$. Let $(\calE,\theta)$ be a Higgs bundle
    on $X$ and $(\calF,\Theta)$ an $S$-family of Higgs bundles on $X$.
    Assume that $(\calF(s),\Theta(s))\cong(\calE,\theta)$ for every
    $s\in S(\bfC)$. Then, for every $s_0\in S(\bfC)$ and every isomorphism
    $\varphi_0:(\calE,\theta)\xrightarrow{\sim}(\calF(s_0),\Theta(s_0))$,
    there are an open neighborhood $U$ of $s_0$ and an isomorphism
    $\varphi:p^*(\calE,\theta)|_{X\times U}\xrightarrow{\sim}
    (\calF,\Theta)|_{X\times U}$ extending $\varphi_0$, where
    $p:X\times S\to X$ is the projection.
\end{lemma}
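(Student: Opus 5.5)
The plan is to imitate the proof of Lemma \ref{lem:fiberwise_trivial_vector_bundle_rigid}, with the sheaf of homomorphisms replaced by the sheaf of \emph{Higgs} homomorphisms. Set
\[
\calH=\HHom(p^*\calE,\calF),\qquad
D:\calH\to\calH\otimes p^*\Omega^1_X(-1),\qquad D(f)=\Theta\circ f-(f\otimes1)\circ p^*\theta .
\]
The sheaf $\calK=\ker D$ is a coherent sheaf on $X\times S$, and $q_*\calK$ is the sheaf of Higgs morphisms $p^*(\calE,\theta)\to(\calF,\Theta)$ over opens of $S$. The goal is a Grauert-type statement for $\calK$: after shrinking $S$, $q_*\calK$ is locally free and its fiber at $s$ maps isomorphically onto $\Hom((\calE,\theta),(\calF(s),\Theta(s)))$. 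Granting this, $\varphi_0$ lifts to a section $\varphi$ of $q_*\calK$ near $s_0$, i.e.\ a Higgs morphism $p^*\calE|_{X\times U}\to\calF|_{X\times U}$. Exactly as before, its non-isomorphism locus $Z$ is a closed analytic subset disjoint from $X\times\{s_0\}$. Replacing $U$ by $U\setminus q(Z)$, using properness of $q$, finishes the proof. A morphism of bundles that is an isomorphism and commutes with Higgs fields is an isomorphism of Higgs bundles.

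To control $q_*\calK$ I would work with the two-term complex
\[
C^\bullet=[\calH\xrightarrow{D}\calH\otimes p^*\Omega^1_X(-1)],
\]
which consists of $S$-flat locally free sheaves. Properness of $q$ makes $Rq_*C^\bullet$ a perfect complex on $S$, and base change holds: $Lj_s^*Rq_*C^\bullet\simeq R\Gamma(X,C^\bullet(s))$. Moreover $\mathbb H^0(X,C^\bullet(s))=\Hom((\calE,\theta),(\calF(s),\Theta(s)))$, and by hypothesis this has constant dimension $\dim\End(\calE,\theta)$ for all $s\in S(\bfC)$. Locally represent $Rq_*C^\bullet$ by a bounded complex of finite free modules $P^0\xrightarrow{d^0}P^1\to\cdots$ (degrees $\ge0$ suffice, since $C^\bullet$ lives in degrees $\ge0$). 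Then $\mathbb H^0$ of the fiber is $\ker(d^0(s))$. Constancy of its dimension on a reduced base forces $\operatorname{rk}d^0(s)$ to be constant. Hence $\ker d^0$ is locally free and commutes with base change, which gives exactly the desired statement for $q_*\calK=\mathcal H^0(Rq_*C^\bullet)$.

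The main obstacle I expect is this base change step in the rigid setting. It needs the existence of a local perfect-complex model for $Rq_*$ of a complex of flat coherent sheaves whose differential is only $\calO_S$-linear rather than $\calO_{X\times S}$-linear. My first attempt would reduce to the cited Grauert/base change result \cite[Proposition~3.15]{Heuer_HodgeTatespectralsequence} together with Kiehl's finiteness theorem. Concretely, apply base change to each term of $C^\bullet$ separately, and obtain the perfect model by taking the cone of the map $Rq_*\calH\to Rq_*(\calH\otimes p^*\Omega^1_X(-1))$. That map is $\calO_S$-linear because $D$ is $\calO_S$-linear.

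A point to double-check is that the constant-rank argument only needs $S$ reduced, which is guaranteed here since $S$ is smooth. Also, pointwise constancy over $\bfC$-points suffices because $\bfC$ is algebraically closed, so classical points are dense.
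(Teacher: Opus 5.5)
Your proposal is correct, and its overall structure matches the paper's proof: take the kernel of the Higgs-Hom differential, push it forward, show it is locally free with the right fibers by a constant-rank argument on the reduced base, lift $\varphi_0$, and remove the non-isomorphism locus. The difference is in how you get base change for $q_*\ker D$.

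You go through $Rq_*C^\bullet$, a local perfect model, and derived base change for a complex whose differential is only $\calO_S$-linear. This is the step you yourself flagged as the main obstacle. The paper avoids it. Because $\calF(s)\cong\calE$ for every $s$, the fibers $\mathcal C^0(s)\cong\EEnd(\calE)$ and $\mathcal C^1(s)\cong\EEnd(\calE)\otimes\Omega^1_X(-1)$ have $h^0$ independent of $s$. So Grauert's theorem, in the same $H^0$ form already used in Lemma \ref{lem:fiberwise_trivial_vector_bundle_rigid}, applies to each term separately. This makes $\mathcal A=q_*\mathcal C^0$ and $\mathcal B=q_*\mathcal C^1$ vector bundles whose fibers are the corresponding $H^0$. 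Then $q_*\delta\colon\mathcal A\to\mathcal B$ is a map of vector bundles whose fiberwise kernels are isomorphic to $\End(\calE,\theta)$, hence of constant dimension. Left exactness identifies its kernel with $q_*\ker\delta$.

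In effect, $\mathcal A\to\mathcal B$ plays the role of your $P^0\to P^1$. No perfect complexes or higher cohomology are needed, because $\mathbb H^0$ only sees $H^0$ of the two terms.

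What your route gains is generality. It uses only the constancy of $\dim\Hom((\calE,\theta),(\calF(s),\Theta(s)))$, not constancy of $h^0$ of the underlying Hom bundles, so it would survive jumps in the underlying bundles. That generality is not needed under the lemma's hypotheses.

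One small point of justification. A local free model concentrated in degrees $\geq 0$ exists because the derived fibers $R\Gamma(X,C^\bullet(s))$ lie in degrees $\geq 0$, i.e.\ because of the tor-amplitude of $Rq_*C^\bullet$. It does not follow merely from $C^\bullet$ living in degrees $\geq 0$.
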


\begin{proof}
    Let $q:X\times S\to S$ be the other projection and put
    \[
        \mathcal C^0=\HHom(p^*\calE,\calF),\qquad
        \mathcal C^1=\mathcal C^0\otimes p^*\Omega_X^1(-1).
    \]
    The Higgs fields induce
    \[
        \delta:\mathcal C^0\longrightarrow\mathcal C^1,\qquad
        f\longmapsto\Theta\circ f-(f\otimes1)\circ p^*\theta.
    \]
    For any $s\in S(\bfC)$, $\mathcal C^0(s)\cong\EEnd(\calE)$ and
    $\mathcal C^1(s)\cong\EEnd(\calE)\otimes\Omega_X^1(-1)$. Their $h^0$
    are therefore independent of $s$. By Grauert's theorem,
    \[
        \mathcal A=q_*\mathcal C^0,\qquad \mathcal B=q_*\mathcal C^1
    \]
    are vector bundles, with natural isomorphisms
    $\mathcal A(s)\xrightarrow{\sim}\rH^0(X,\mathcal C^0(s))$ and
    $\mathcal B(s)\xrightarrow{\sim}\rH^0(X,\mathcal C^1(s))$ for every
    $s\in S(\bfC)$. Let $D=q_*\delta:\mathcal A\to\mathcal B$.
    Under any isomorphism $(\calF(s),\Theta(s))\cong(\calE,\theta)$,
    $D(s)$ identifies with
    \[
        [\theta,-]:\rH^0(X,\EEnd(\calE))\longrightarrow
        \rH^0\bigl(X,\EEnd(\calE)\otimes\Omega_X^1(-1)\bigr),
    \]
    so $\dim_{\bfC}\ker D(s)$ is independent of $s$. Since $S$ is reduced,
    this implies that $\mathcal K=\ker D$ is locally
    free, with natural isomorphisms
    \[
        \mathcal K(s)\xrightarrow{\sim}
        \ker D(s)\cong
        \Hom\bigl((\calE,\theta),(\calF(s),\Theta(s))\bigr).
    \]
    By left exactness, $\mathcal K=q_*(\ker\delta)$. Thus $\varphi_0$ lifts
    locally to a section of $\mathcal K$, giving a Higgs morphism
    $p^*(\calE,\theta)|_{X\times U}\to(\calF,\Theta)|_{X\times U}$.
    The non-isomorphism-locus argument above makes it an isomorphism after
    shrinking $U$.
\end{proof}

\begin{theorem}\label{theo: int lift independ to quasi independ}
    Let $\frakX$ be a proper smooth formal scheme over $\calO_{\bfC}$
    admitting a flat $A_2$-lifting, and let $(\calH,\theta)$ be a
    $\frac{1}{p-1}$-Hitchin-small Higgs bundle on $\frakX$.
    If $(\calH,\theta)$ is integrally lift-independent, then
    \[
        \rH^1(\frakX,\rho^{-1}T_{\frakX}(1))\Bigl[\frac1p\Bigr]
        \xrightarrow{\,\rH^1(\theta)[1/p]\,}
        \rH^1(\frakX,\EEnd(\calH,\theta))\Bigl[\frac1p\Bigr]
    \]
    is zero. Equivalently, under the proper formal--generic-fiber comparison,
    \[
        \rH^1(X,T_X(1))\xrightarrow{\,\rH^1(\theta)\,}
        \rH^1\bigl(X,\EEnd(\calH[1/p],\theta)\bigr)
    \]
    is zero, where $X$ and $(\calH[1/p],\theta)$ are the generic fibers of
    $\frakX$ and $(\calH,\theta)$, respectively.
\end{theorem}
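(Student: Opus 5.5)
We argue by a deformation/linearization argument over the parameter space of liftings. Fix a flat $A_2$-lifting $\widetilde{\frakX}_1$ and a class $c\in\rH^1(\frakX,\rho^{-1}T_{\frakX}(1))$, represented on a covering by small affines $\{\frakU_\lambda\}$ by a Čech cocycle $(a_{\lambda\mu})$. For each $s\in\calO_{\bfC}$, let $\widetilde{\frakX}_s$ be the lifting with $[\widetilde{\frakX}_s]-[\widetilde{\frakX}_1]=s\,c$. By Lemma \ref{lem:action-of-lift}, $\frakS_{\widetilde{\frakX}_s}(\frakS^{-1}_{\widetilde{\frakX}_1}(\calH,\theta))$ is given by the descent datum $((M_\lambda,\theta_\lambda),\varphi_{\lambda\mu}\circ\exp(s\rho\theta(a_{\lambda\mu})))$. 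Since $\rho\theta(a_{\lambda\mu})$ is topologically nilpotent by Hitchin-smallness, after inverting $p$ and restricting to a small disc these data glue to a Higgs bundle $(\calF,\Theta)$ on $X\times S$, where $S$ is a closed disc of small radius (say $S=\Spa\bfC\langle s\rangle$, or a smaller disc so that $\exp(s\rho\theta(a_{\lambda\mu}))$ converges analytically in $s$). Its fibre at $s=0$ is $(\calH[1/p],\theta)$.

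Next, integral lift-independence (Definition \ref{def:int-lift-indep-Higgs}) gives, for each $s\in S(\bfC)\cap\calO_{\bfC}$, an isomorphism $(\calF(s),\Theta(s))\cong(\calH[1/p],\theta)$, since both are images of the same representation under $\frakS_{\widetilde{\frakX}_s}$ and $\frakS_{\widetilde{\frakX}_1}$. Applying Lemma \ref{lem:fiberwise_trivial_Higgs_bundle_rigid} with $\varphi_0=\id$ yields a neighbourhood $U$ of $0$ and an isomorphism $\varphi:p^*(\calE,\theta)|_{X\times U}\cong(\calF,\Theta)|_{X\times U}$ restricting to the identity at $s=0$. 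Locally on $U_\lambda\times U$, write $\varphi=\varphi_\lambda(s)$, an automorphism of $M_\lambda[1/p]\otimes\calO(U)$ commuting with $\theta_\lambda$, with $\varphi_\lambda(0)=\id$. The gluing compatibility reads
\[
\varphi_\mu(s)\circ\varphi_{\lambda\mu}=\varphi_{\lambda\mu}\circ\exp\bigl(s\rho\theta(a_{\lambda\mu})\bigr)\circ\varphi_\lambda(s).
\]
Differentiating at $s=0$ (that is, reducing modulo $s^2$) gives $\dot\varphi_\mu-\dot\varphi_\lambda=\rho\theta(a_{\lambda\mu})$ on overlaps (suppressing $\varphi_{\lambda\mu}$), with $\dot\varphi_\lambda\in\Gamma(U_\lambda,\EEnd(\calH[1/p],\theta))$. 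Thus $\rH^1(\theta)(\rho c)$ is a Čech coboundary in $\EEnd(\calH[1/p],\theta)$. As $c$ is arbitrary and the Čech classes compute cohomology on the affinoid covering, $\rH^1(\theta)[1/p]=0$. The equivalent formulation follows from the comparison $\rH^1(\frakX,\calF)[1/p]\cong\rH^1(X,\calF[1/p])$ for coherent $\calF$ on proper $\frakX$, since $\EEnd(\calH,\theta)[1/p]=\EEnd(\calH[1/p],\theta)$.

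The main obstacle will be the first step: making the family over $S$ honestly rigid-analytic. Lemma \ref{lem:fiberwise_trivial_Higgs_bundle_rigid} needs every $\bfC$-point to lie in the family and the gluing to be analytic in $s$. For this we will use that $(\rho\theta(a_{\lambda\mu}))^n/n!$ tends to zero uniformly, by smallness: $\theta$ takes values in $\rho\widehat\Omega^1$, and the $\frac1{p-1}$-smallness gives nilpotence estimates. To ensure convergence we will restrict to $|s|\le1$ or a smaller radius. We also need that the lift-independence isomorphisms exist for all $s$ in this disc, which holds because every $s\in\calO_{\bfC}$ gives a genuine lifting.
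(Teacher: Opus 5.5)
Your proposal is correct and follows essentially the same route as the paper. You build the $\bD^1$-family of descent data twisted by $\exp(s\rho\theta(a_{\lambda\mu}))$, use integral lift-independence at every $s\in\calO_{\bfC}$ together with Lemma~\ref{lem:fiberwise_trivial_Higgs_bundle_rigid} to get a Higgs isomorphism $\varphi$ with $\varphi(0)=\id$, and then read off a \v{C}ech coboundary for $\rho\theta(a_{\lambda\mu})$ from the first-order term in $s$. The differences are cosmetic: you take an integral class $c$ directly, which suffices since such classes span $\rH^1(\frakX,\rho^{-1}T_{\frakX}(1))[1/p]$, where the paper rescales an arbitrary $\eta$ by $p^N$; and Hitchin-smallness already gives convergence on the full unit disc $\Spa(\bfC\langle s\rangle)$, so no smaller radius is needed.
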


\begin{proof}
    Put $\mathcal T=\rho^{-1}T_{\frakX}(1)$. Choose a finite small affine cover
    $\{\frakU_i\}_{i\in I}$ of $\frakX$ and write $(\calH,\theta)$ as the
    descent datum $((M_i,\theta_i),g_{ij})$. This cover computes the
    coherent cohomology of $\mathcal T$. Thus, for any
    $\eta\in\rH^1(\frakX,\mathcal T)[1/p]$, there are $N\geq0$ and a
    \v{C}ech $1$-cocycle $(D_{ij})$ for $\mathcal T$ on $\{\frakU_i\}$
    representing $p^N\eta$. Let $\{U_i\}$ be the induced cover of $X$ and
    $\bD^1=\Spa(\bfC\langle z\rangle)$. On $X\times\bD^1$, consider the
    descent datum
    \[
        \left(
        \bigl(M_i[1/p]\langle z\rangle,\theta_i\bigr),
        \exp\bigl(\rho z\theta_j(D_{ij})\bigr)\circ g_{ij}
        \right).
    \]
    Hitchin-smallness gives convergence of the exponential, while
    $\theta\wedge\theta=0$ and the cocycle condition for $(D_{ij})$ give the
    cocycle condition for the transition maps. Hence this datum defines a
    $\bD^1$-family $(\calF,\Theta)$ of Higgs bundles on $X$.

    Fix a flat $A_2$-lifting $\widetilde{\frakX}_0$. For every
    $z\in\bD^1(\bfC)=\calO_{\bfC}$, the lifting torsor provides a flat lifting
    $\widetilde{\frakX}_z$ whose difference from $\widetilde{\frakX}_0$ is
    $z[(D_{ij})]$. Lemma~\ref{lem:action-of-lift} and integral
    lift-independence give
    \[
        (\calF(z),\Theta(z))\cong
        \bigl(\frakS_{\widetilde{\frakX}_z}\frakS_{\widetilde{\frakX}_0}^{-1}
        (\calH,\theta)\bigr)[1/p]
        \cong(\calH[1/p],\theta).
    \]
    Applying Lemma~\ref{lem:fiberwise_trivial_Higgs_bundle_rigid} at $z=0$
    gives, on a neighborhood of $0$, an isomorphism
    \[
        \varphi:p_X^*(\calH[1/p],\theta)\xrightarrow{\sim}(\calF,\Theta),
        \qquad \varphi(0)=\id,
    \]
    where $p_X:X\times\bD^1\to X$ is the projection.

    Restrict to $\Spa(\bfC\langle z\rangle/(z^2))$ and write
    $\varphi|_{U_i}=\id+z\psi_i$. Since $\varphi$ is a Higgs isomorphism,
    $\psi_i\in\Gamma(U_i,\EEnd(\calH[1/p],\theta))$. Compatibility with the
    transition maps gives
    \[
        (\id+z\psi_j)\circ g_{ij}
        =(\id+\rho z\theta_j(D_{ij}))\circ g_{ij}\circ(\id+z\psi_i).
    \]
    Comparing coefficients of $z$ yields
    \[
        \rho\theta_j(D_{ij})
        =\psi_j-g_{ij}\circ\psi_i\circ g_{ij}^{-1}.
    \]
    Thus $(\rho\theta(D_{ij}))$ is a \v{C}ech coboundary, so
    \[
        0=\rho\,\rH^1(\theta)(p^N\eta)
        =\rho p^N\rH^1(\theta)(\eta).
    \]
    Since $\rho p^N\in\bfC^\times$, $\rH^1(\theta)(\eta)=0$. As $\eta$ was
    arbitrary, the map is zero.
\end{proof}

\section{Small Rank Semistable Cohomologically Lift-independent Higgs Bundles Have Zero Higgs Fields}\label{section: indep to 0}

Throughout this section, let $K$ be an algebraically closed field of characteristic $0$.
By a variety over $K$, we mean a rigid analytic variety when $K$ is a complete $p$-adic field, and an algebraic variety otherwise.
Proper rigid analytic curves are algebraic \cite[\S1.3]{Conrad2006ModularCurves}.
Throughout this section, we therefore regard proper curves as algebraic curves, identifying their coherent sheaves and cohomology via rigid GAGA \cite[Example~3.2.6]{Conrad2006RelativeAmpleness} and passing to analytifications in the $p$-adic arguments.
The fiber notation $\calF(x)$ and $f(x)$ is used in both settings.
The notions of lift-independence and integral lift-independence are used only over $\bfC$.
When $K=\bfC$, fix a choice of $\epsilon=(1,\zeta_p,\zeta_{p^2},\dots)\in\bfC^{\flat}$, and thus identify $\bfC(1)\cong\bfC$.

Recall the following definition:

\begin{definition}[Cohomological lift-independence]
    Let $X$ be a smooth variety over $K$. A Higgs bundle $(\calH,\theta)$ on $X$ is called cohomologically lift-independent if the map
    \[\rH^1(X,T_{X}) \xrightarrow[]{\rH^1(\theta)} \rH^1(X,\EEnd(\calH,\theta))\]
    is zero.
\end{definition}

\subsection{General facts of (cohomologically) lift-independent Higgs bundles}

Throughout this subsection, let $X$ be a smooth projective connected curve over $K$ of genus $g\geqslant2$.

We begin by giving an example of a lift-independent Higgs bundle $(\calH,\theta)$ that is nilpotent but $\theta\neq 0$.

\begin{example}\label{eg: without semi stable}
    Let $K=\bfC$. 
    Let $\calL$ be a line bundle of degree $d > 2g - 2$ on $X$. 
    Let $s \in  \rH^0(X, \calL \otimes \Omega_{X})$ be a nonzero global section of $\mathcal{L}\otimes \Omega_{X}$. 
    Let $\calH = \calL \oplus \calO_{X}$, and let
    $$\theta = \begin{pmatrix} 0 & s \\ 0 & 0 \end{pmatrix} : \calH \xrightarrow[]{\mathrm{pr}} \calO_{X} \xrightarrow[]{s} \calL \otimes \Omega_{X}  \xrightarrow[]{\mathrm{incl}} \calH \otimes \Omega_{X} .$$
    Then $(\calH, \theta)$ is lift-independent.

    Indeed, 
    $$\EEnd(\calH, \theta) = \calO_{X} \oplus \calL.$$
    The Higgs field $\theta$ induces a map
    $$\theta : T_{X}  \xrightarrow[]{\mathrm{coev} \otimes 1} \calH^\vee \otimes \calH \otimes T_X  \xrightarrow[]{1 \otimes \theta \otimes 1}\calH^\vee \otimes \calH \otimes \Omega_{X}  \otimes T_{X}  \xrightarrow[]{1 \otimes \mathrm{ev}} \calH^\vee \otimes \calH = \EEnd(\calH),$$
    whose image is contained in $\EEnd(\calH, \theta)$. 
    By the definition of $(\calH, \theta)$, the composition
    $$\calO_{X} \to \calO_{X} \oplus \calL^\vee = \EEnd(\calH, \theta)^\vee \xrightarrow{\theta^\vee} \Omega_X $$
    is zero. 
    As $\deg(\calL^\vee \otimes \Omega_{X}) < 0$, we see that
    $$ \rH^0(X, \calL^\vee \otimes \Omega_{X}) = 0.$$
    Hence
    $$ \rH^0(X,\EEnd(\calH, \theta)^\vee \otimes \Omega_{X}) \xrightarrow[]{ \rH^0(\theta^\vee)}  \rH^0(X, \Omega_{X}^{\otimes 2} )$$
    is zero. 
    By Serre duality,
    $$ \rH^1(X, T_X ) \xrightarrow[]{ \rH^1(\theta)}  \rH^1(X, \EEnd(\calH, \theta))$$
    is zero.

    Fix an affinoid cover $\frakU = \{U_1, U_2\}$ of $X$. 
    For any $a \in \check{Z}^1(\mathfrak{U}, T_X )$, we know that $\theta(a) \in \check{B}^1(\mathfrak{U}, \EEnd(\calH, \theta))$, 
    that is, $\exists \psi_i = \psi_i(a) \in \Gamma(U_i, \EEnd(\calH, \theta))$, such that
    $$\theta(a) = \psi_2|_{U_{12}} - \psi_1|_{U_{12}}$$ where $U_{12}=U_1\cap U_2$.
    By the construction of $(\calH, \theta)$, we see that the composition
    $$T_X  \xrightarrow[]{\theta} \EEnd(\calH, \theta) = \calO_{X} \oplus \calL \to \calO_{X}$$
    is zero, so we may replace $\psi_i$ by their image under
    $$\Gamma(U_{i}, \EEnd(\calH, \theta)) \xrightarrow[]{\mathrm{pr}} \Gamma(U_{i}, \calL) \xrightarrow[]{\mathrm{incl}} \Gamma(U_{i}, \EEnd(\calH, \theta)).$$
    Thus we may assume that $\psi_i$ are nilpotent.
    Let $\varphi_i = \exp(\psi_i)$. We obtain a commutative diagram
    $$
    \begin{tikzcd}
        \calH|_{U_{12}} \arrow[r, "\mathrm{id}"] \arrow[d, "\varphi_1"] & \mathcal{H}|_{U_{12}} \arrow[d, "\varphi_2"'] \\
        \calH|_{U_{12}} \arrow[r, "\exp(\theta(a))"'] & \mathcal{H}|_{U_{12}}
    \end{tikzcd}
    $$
    which implies that $(\calH, \theta)$ is lift-independent by Remark \ref{rmk:rat-transition}.
\end{example}

\begin{lemma}\label{lemma: Basic lemma}
    Any cohomologically lift-independent Higgs bundle $(\calH,\theta)$ satisfies $\tr(\theta) = 0$ and $\tr(\theta^2) = 0$. 
    In particular, if $\rk(\calH)=1$, $\theta=0$; if $\rk(\calH) = 2$, $\theta^2=0$.
\end{lemma}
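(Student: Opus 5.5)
Use trace maps out of $\EEnd(\calH,\theta)$ and Serre duality. Since $\theta\wedge\theta=0$ on a curve is automatic and $\theta$ commutes with itself, for $k=1,2$ the map
\[
\tau_k:\EEnd(\calH,\theta)\longrightarrow \calO_X,\qquad \phi\longmapsto \tr(\phi\,\theta^{k-1})
\]
is defined. Here $\theta^{k-1}$ is viewed as a section of $\EEnd(\calH)\otimes\Omega_X^{\otimes(k-1)}$, so $\tau_k$ actually lands in $\Omega_X^{\otimes (k-1)}$. Compose $\rH^1(\theta)$ with $\rH^1(\tau_k)$. By cohomological lift-independence the composite
\[
\rH^1(X,T_X)\xrightarrow{\rH^1(\theta)}\rH^1(X,\EEnd(\calH,\theta))\xrightarrow{\rH^1(\tau_k)}\rH^1(X,\Omega_X^{\otimes(k-1)}\otimes\Omega_X^{\otimes 0})
\]
vanishes. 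The composite sheaf map $T_X\to\Omega_X^{\otimes(k-1)}$ sends a local vector field $v$ to $\tr(\theta(v)\theta^{k-1})$. Equivalently, it is contraction of $v$ with the global section $\tr(\theta^k)\in\rH^0(X,\Omega_X^{\otimes k})$. Identify $\theta(v)$ with the contraction $\iota_v\theta$; since the trace is multilinear, $\tr(\iota_v\theta\cdot\theta^{k-1})=\iota_v\tr(\theta^k)$ after symmetrization, which is harmless on a curve because all forms are in one variable.

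It therefore suffices to show the following. If $\omega\in\rH^0(X,\Omega_X^{\otimes k})$, $k\in\{1,2\}$, and the map
\[
\rH^1(X,T_X)\to\rH^1(X,\Omega_X^{\otimes(k-1)}),\qquad v\mapsto\iota_v\omega
\]
is zero, then $\omega=0$. The map is induced by multiplication by $\omega$, namely $T_X=\Omega_X^{-1}\to\Omega_X^{k-1}$. By Serre duality its dual is
\[
\rH^0(X,\Omega_X^{\otimes(2-k)})\xrightarrow{\cdot\omega}\rH^0(X,\Omega_X^{\otimes 2}).
\]

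For $k=2$ the source is $\rH^0(\calO_X)=K$ and the map sends $1$ to $\omega$, so its vanishing gives $\omega=\tr(\theta^2)=0$. For $k=1$ the source is $\rH^0(\Omega_X)$, which is nonzero since $g\ge2$. Choose a nonzero $\eta$; then $\eta\omega=0$ in $\rH^0(\Omega_X^{\otimes2})$, and because $X$ is an integral curve this forces $\omega=\tr(\theta)=0$. The one point needing care is checking that Serre duality identifies the dual of $\rH^1$ of a sheaf map with $\rH^0$ of the dual map tensored with $\Omega_X$. This is standard functoriality of the duality pairing, and I expect it to be the only mildly delicate step.

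For the consequences: in rank $1$, $\theta$ is a $1$-form and $\theta=\tr(\theta)=0$. In rank $2$, Cayley--Hamilton gives $\theta^2-\tr(\theta)\theta+\det(\theta)=0$, with $\det\theta=\tfrac12(\tr(\theta)^2-\tr(\theta^2))$. Since both traces vanish, $\theta^2=0$.
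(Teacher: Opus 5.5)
Your proof is correct and is essentially the paper's argument in dual form. The paper first Serre-dualizes $\rH^1(\theta)$ to the map $\phi\mapsto\tr(\phi\circ\theta)$ on $\rH^0(X,\EEnd(\calH)\otimes\Omega_X)$ and evaluates it at $\phi=\theta$ and at $\phi=\eta\cdot\id$; your composition with $\tau_2$ and $\tau_1$ followed by Serre duality amounts to exactly this, and your explicit Cayley--Hamilton step for rank $2$ (using characteristic $0$) fills in what the paper leaves implicit.
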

\begin{proof}
    Since $(\calH, \theta)$ is cohomologically lift-independent, the map $\rH^1(X, T_{X}) \xrightarrow[]{\rH^1(\theta)} \rH^1(X, \EEnd(\calH))$ is zero. 
    By Serre duality, it follows that the map 
    $$ \rH^0(X, \EEnd(\calH) \otimes \Omega_X) \xrightarrow[]{\operatorname{tr}(-\circ \theta)} \rH^0(X, \Omega_{X}^{\otimes 2}) $$
    is also zero. 
    Applying this to $\theta \in \rH^0(X, \EEnd(\calH) \otimes \Omega_{X})$, we conclude that $\operatorname{tr}(\theta^2) = 0$.

    Given $g \geqslant 2$, we have $\rH^0(X, \Omega_{X}) \neq 0$. Choose a nonzero $\eta \in \mathrm{H}^0(X, \Omega_{X})$. 
    We then obtain the following commutative diagram:
    $$
    \begin{tikzcd}
        \rH^0(X, \EEnd(\calH)) \arrow[r, "\tr(- \circ \theta)"] \arrow[d, "1 \otimes \eta"'] 
        &  \rH^0(X, \Omega_{X}) \arrow[d, "1 \otimes \eta"] \\
        \rH^0(X,\EEnd(\calH) \otimes \Omega_{X}) \arrow[r, "\tr(- \circ \theta)"'] 
        &  \rH^0(X, \Omega_{X}^{\otimes 2}).
    \end{tikzcd}
    $$
    Since $\eta \neq 0$, the map $1 \otimes \eta: \Omega_X \to \Omega_X^{\otimes 2}$ is injective. 
    Considering global sections, the induced map $1 \otimes \eta: \rH^0(X, \Omega_{X}) \to \rH^0(X, \Omega_{X}^{\otimes 2})$ remains injective. 
    Consequently, the composition $\tr(-\circ \theta) \circ (1 \otimes \eta)$ is the zero map, implying that the map $\rH^0(X, \EEnd(\calH)) \xrightarrow[]{\tr(-\circ \theta)} \rH^0(X, \Omega_{X})$ must be zero. 
    Applying this to the identity section $\id \in \rH^0(X, \EEnd(\calH))$, we conclude that $\tr(\theta) = 0$.
\end{proof}

\begin{definition}
    A closed point $x \in X$ is called a Weierstrass point if
    $$ h^0(X, \calO_{X}(gx)) \geq 2.$$

    There are only finitely many Weierstrass points
    \cite[Proposition~3(ii) and Theorem~11]{Laksov1981}.
    If $x$ is a non-Weierstrass point, we have
    $$ h^0(X, \calO_{X}(mx)) = 1, \quad \forall 1 \leq m \leq g.$$
\end{definition}

\begin{lemma}\label{lem:extension_lemma}
    Let $(\calH,\theta)$ be a cohomologically lift-independent Higgs bundle on $X$.
    Let $x\in X$ be a closed point and put $\calL=\calO_X(x)$.
    For any isomorphism $\alpha : \Omega_X(x) \xrightarrow{\sim} \calL(x)$,
    there exists a morphism $\Phi: \calH \to \calH \otimes \calL$ such that
    $$ \Phi(x)= (1 \otimes \alpha) \circ \theta(x).$$
\end{lemma}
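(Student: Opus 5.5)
The plan is to build $\Phi$ as a global section of $\EEnd(\calH,\theta)\otimes\calL$ by lifting a prescribed fiber value at $x$. Cohomological lift-independence will kill the obstruction to that lift, by naturality of connecting homomorphisms.

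\emph{Step 1: the tangent-side sequence and the class $c$.} Since $X$ is a smooth curve, the subsheaf $\EEnd(\calH,\theta)\subset\EEnd(\calH)$ is torsion-free, hence locally free. Consider the exact sequence
\[
0\to T_X\to T_X\otimes\calL\to (T_X\otimes\calL)(x)\to 0 ,
\]
where the last term is a skyscraper at $x$. The isomorphism $\alpha:\Omega_X(x)\xrightarrow{\sim}\calL(x)$ is the same as a nonzero vector $v\in\Omega_X(x)^\vee\otimes\calL(x)=(T_X\otimes\calL)(x)$. Let $c=\delta_T(v)\in\rH^1(X,T_X)$ be its image under the connecting map.

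\emph{Step 2: the endomorphism-side sequence and vanishing of the obstruction.} Tensoring $\theta:T_X\to\EEnd(\calH,\theta)$ (from the Convention, with the twist trivialized) with $0\to\calO_X\to\calL\to\calL(x)\to0$ gives a morphism of short exact sequences into
\[
0\to\EEnd(\calH,\theta)\to\EEnd(\calH,\theta)\otimes\calL\to\bigl(\EEnd(\calH,\theta)\otimes\calL\bigr)(x)\to0 .
\]
By naturality of connecting homomorphisms,
\[
\delta_{\EEnd}\bigl((\theta\otimes1)(x)(v)\bigr)=\rH^1(\theta)(c).
\]
The right-hand side is zero by cohomological lift-independence. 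So, by the long exact sequence, $(\theta\otimes1)(x)(v)$ lifts to a global section $\Phi\in\rH^0(X,\EEnd(\calH,\theta)\otimes\calL)$. Through the inclusion into $\EEnd(\calH)\otimes\calL$, we view $\Phi$ as a morphism $\calH\to\calH\otimes\calL$; it even commutes with $\theta$, which is more than the statement asks.

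\emph{Step 3: identifying the fiber.} It remains to check that the image of $(\theta\otimes1)(x)(v)$ in $\EEnd(\calH(x))\otimes\calL(x)$ is $(1\otimes\alpha)\circ\theta(x)$. The fiber map $\EEnd(\calH,\theta)(x)\to\EEnd(\calH)(x)$ need not be injective, but this causes no trouble, since we only need the image. The verification unwinds the coevaluation/evaluation description of $\theta:T_X\to\EEnd(\calH)$. Pairing $\theta(x)\in\EEnd(\calH(x))\otimes\Omega_X(x)$ with $v\in T_X(x)\otimes\calL(x)$ contracts the $\Omega_X(x)$ factor against $T_X(x)$. This is exactly applying $\alpha$ to the $\Omega_X(x)$ factor.

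This last identification is the only step requiring care: one must keep track of the Tate twist, which is trivialized via $\epsilon$, and of the duality convention defining $v$ from $\alpha$. I expect no genuine obstacle there. The conceptual heart is Step 2.
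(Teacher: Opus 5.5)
Your proof is correct and follows the paper's argument: the same pair of short exact sequences coming from the canonical section of $\calO_X(x)$, naturality of the connecting maps to kill the obstruction, and the identification of $(\theta\otimes1)(x)(\alpha)$ with $(1\otimes\alpha)\circ\theta(x)$. The only difference is that the paper first composes to $\rH^1(X,\EEnd(\calH))$ and lifts in $\EEnd(\calH)\otimes\calL$. You instead lift in $\EEnd(\calH,\theta)\otimes\calL$, which is legitimate because that sheaf is locally free on a curve, and this gives the harmless extra property that $\Phi$ commutes with $\theta$.
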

\begin{proof}
    By cohomological lift-independence, the composite
    $$\rH^1(X,T_X)\xrightarrow{0}\rH^1(X,\EEnd(\calH,\theta))
      \longrightarrow\rH^1(X,\EEnd(\calH))$$
    induced by $\theta$ is zero.
    Consider the canonical section $s \in \rH^0(X,\calL)$, whose zero divisor is $x$.
    Let $i : \{x\}=\Spec K \hookrightarrow X$ be the inclusion.
    There are short exact sequences
    $$
    \begin{tikzcd}
        0 \arrow[r]
        & T_X \arrow[r, "s"] \arrow[d,"\theta"']
        & T_X \otimes \calL \arrow[r] \arrow[d,"\theta \otimes 1"]
        & i_*((T_X \otimes \calL)(x)) \arrow[r] \arrow[d, "i_*((\theta \otimes 1)(x))"]
        & 0 \\
        0 \arrow[r]
        & \EEnd(\calH) \arrow[r, "s"]
        & \EEnd(\calH) \otimes \calL \arrow[r]
        & i_*((\EEnd(\calH) \otimes \calL)(x)) \arrow[r]
        & 0.
    \end{tikzcd}
    $$
    Taking cohomology, we obtain a commutative diagram
    \begin{equation}\label{diag:lemma_extension}
    \begin{tikzcd}
        \rH^0(X, T_X \otimes \calL) \arrow[r] \arrow[d,"\rH^0(\theta \otimes 1)"']
        &(T_X \otimes \calL)(x) \arrow[r, "\delta_T"] \arrow[d, "(\theta \otimes 1)(x)"']
        & \rH^1(X, T_X) \arrow[d, "\rH^1(\theta)=0"] \\
        \rH^0(X, \EEnd(\calH) \otimes \calL) \arrow[r]
        &(\EEnd(\calH) \otimes \calL)(x) \arrow[r, "\delta_E"]
        & \rH^1(X, \EEnd(\calH))
    \end{tikzcd}
    \end{equation}
    with exact rows. Via the canonical identification
    $$\Hom_K(\Omega_X(x),\calL(x))\cong(T_X\otimes\calL)(x),$$
    we regard $\alpha$ as an element of $(T_X\otimes\calL)(x)$.
    Under this identification, $(\theta\otimes1)(x)$ sends $\alpha$ to
    $(1\otimes\alpha)\circ\theta(x)$.
    Since
    $$\delta_E\bigl((\theta\otimes1)(x)(\alpha)\bigr)
      =\rH^1(\theta)(\delta_T(\alpha))=0,$$
    we may extend $(1\otimes\alpha)\circ\theta(x)$ to a global section
    $$\Phi \in \rH^0(X,\EEnd(\calH)\otimes\calL)
      =\Hom(\calH,\calH\otimes\calL)$$
    by the exactness of the bottom row of diagram \ref{diag:lemma_extension}.
\end{proof}

\begin{proposition}\label{prop:vanish_of_power_trace_of_CLI}
    Let $(\calH,\theta)$ be a cohomologically lift-independent Higgs bundle on $X$. Then
    $$ \tr(\theta^m) = 0 , \quad \forall 1 \leq m \leq g.$$
\end{proposition}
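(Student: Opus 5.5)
The plan is to show that the section $\tr(\theta^m)\in\rH^0(X,\Omega_X^{\otimes m})$ vanishes at every non-Weierstrass point $x\in X$. There are only finitely many Weierstrass points, so such points are infinitely many and hence dense, and this forces $\tr(\theta^m)=0$. The case $m=1$ is already covered by Lemma \ref{lemma: Basic lemma}, so we assume $2\le m\le g$. We use two inputs:
\begin{itemize}
\item Lemma \ref{lem:extension_lemma}, to produce twisted global endomorphisms whose value at $x$ agrees with $\theta(x)$;
\item the Serre-dual form of cohomological lift-independence used in the proof of Lemma \ref{lemma: Basic lemma}, namely
\[
\tr(\Psi\circ\theta)=0\in\rH^0(X,\Omega_X^{\otimes2})\qquad\text{for every }\Psi\in\rH^0(X,\EEnd(\calH)\otimes\Omega_X).
\]
\end{itemize}

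Fix a non-Weierstrass point $x$ and put $\calL=\calO_X(x)$. Choose an isomorphism $\alpha:\Omega_X(x)\xrightarrow{\sim}\calL(x)$. Lemma \ref{lem:extension_lemma} gives $\Phi:\calH\to\calH\otimes\calL$ with $\Phi(x)=(1\otimes\alpha)\circ\theta(x)$. Iterating, we get the composite
\[
\Phi^{m-1}:\calH\to\calH\otimes\calO_X((m-1)x),
\]
whose fiber at $x$ is $\theta(x)^{m-1}$ followed by $\alpha^{\otimes(m-1)}$ on the form factor.

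Next we untwist. Since $x$ is not a Weierstrass point, its gap sequence is $1,\dots,g$, so
\[
h^0\bigl(X,\Omega_X(-(m-1)x)\bigr)=g-(m-1)\ge1\qquad\text{for }m\le g.
\]
This is the equivalent Riemann--Roch reformulation of $h^0(\calO_X(kx))=1$ for $k\le g$. Choose a nonzero $\omega\in\rH^0(X,\Omega_X(-(m-1)x))$. Then
\[
\Psi:=\Phi^{m-1}\otimes\omega\in\rH^0(X,\EEnd(\calH)\otimes\Omega_X)
\]
is an honest global section. The Serre-dual vanishing gives
\[
0=\tr(\Psi\circ\theta)=\omega\cdot\tr(\Phi^{m-1}\circ\theta),
\]
where $\tr(\Phi^{m-1}\circ\theta)$ is a section of $\Omega_X^{\otimes 2}$ twisted appropriately by $\calO_X((m-1)x)$ and $\omega$ is multiplied in as a section of a line bundle. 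Since $X$ is integral and $\omega\neq0$, multiplication by $\omega$ is injective on sections of line bundles. Hence $\tr(\Phi^{m-1}\circ\theta)=0$ as a global section of $\Omega_X\otimes\calO_X((m-1)x)$.

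Finally we take the fiber at $x$. Traces commute with taking fibers, so we get
\[
0=\tr\bigl(\theta(x)^{m-1}\circ\theta(x)\bigr)\otimes\alpha^{\otimes(m-1)}=\alpha^{\otimes(m-1)}\bigl(\tr(\theta^m)(x)\bigr).
\]
Since $\alpha$ is an isomorphism, $\tr(\theta^m)(x)=0$.

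The main point needing care is this last bookkeeping step. We must check that composing the twisted maps $\Phi$ and then $\theta$ yields exactly $\tr(\theta^m)(x)$ transported by the isomorphism $\alpha^{\otimes(m-1)}$ between the fibers $\Omega_X^{\otimes (m-1)}(x)$ and $\calO_X((m-1)x)(x)$. The other delicate point is the correct use of the non-Weierstrass gap condition to guarantee $\omega\neq0$ exactly in the range $m\le g$; this is where the bound $m\le g$ enters.
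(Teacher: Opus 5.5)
Your argument is correct, but it takes a different route from the paper's. The paper applies Lemma \ref{lem:extension_lemma} and then takes the full power $\Phi^m$. Its trace lies in $\rH^0(X,\calO_X(mx))$, which at a non-Weierstrass point is spanned by $s^{\otimes m}$, so $\tr(\Phi^m)$ vanishes at $x$; hence $\tr(\theta^m)(x)=0$, and density of non-Weierstrass points finishes. You instead stop at $\Phi^{m-1}$ and untwist by a differential $\omega$ vanishing to order $m-1$ at $x$. You then invoke the cohomological condition a second time, in its Serre-dual form $\tr(\Psi\circ\theta)=0$. This is the same device as in Lemma \ref{lemma: Basic lemma}, which your argument recovers as the case $m=1$ with $\Phi^0=\id$.

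You misidentify where the hypotheses enter: the non-Weierstrass condition plays no role in your proof. For an arbitrary closed point $x$, $\rH^0(X,\Omega_X(-(m-1)x))$ is the kernel of $\rH^0(X,\Omega_X)\to\rH^0\bigl(X,\Omega_X/\Omega_X(-(m-1)x)\bigr)$. The target has dimension $m-1$, so the kernel has dimension at least $g-m+1\geq 1$ whenever $m\leq g$. Thus your argument yields $\tr(\theta^m)(x)=0$ at every closed point directly. The finiteness of Weierstrass points and the density step can be dropped, and the bound $m\leq g$ is the only place the genus enters.

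The paper's version uses the cohomological vanishing only once, through $\Phi$, and lets the gap sequence at a non-Weierstrass point do the remaining work. The price is needing such points and a density argument. Yours uses the vanishing twice but works pointwise everywhere.
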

\begin{proof}
    Let $x \in X$ be a non-Weierstrass closed point and put $\calL=\calO_X(x)$.
    Consider the canonical section $s \in \rH^0(X,\calL)$, whose zero divisor is $x$.
    Since $x$ is non-Weierstrass,
    $$\rH^0(X,\calL^{\otimes m})=K\cdot s^{\otimes m},\quad 1\leq m\leq g.$$

    Choose an isomorphism $\alpha : \Omega_X(x) \xrightarrow{\sim} \calL(x)$.
    By Lemma \ref{lem:extension_lemma}, there exists a morphism $\Phi: \calH\to\calH\otimes\calL$ such that
    $$ \Phi(x)=(1\otimes\alpha)\circ\theta(x).$$
    For $1\leq m\leq g$, the section $\tr(\Phi^m)\in\rH^0(X,\calL^{\otimes m})$
    is a scalar multiple of $s^{\otimes m}$. Since $s(x)=0$, we have
    $$0=\tr(\Phi^m)(x)=\alpha^{\otimes m}\bigl(\tr(\theta^m)(x)\bigr).$$
    As $\alpha^{\otimes m}$ is an isomorphism, we deduce that $\tr(\theta^m)(x)=0$.

    Thus $\tr(\theta^m)$ vanishes at every non-Weierstrass closed point.
    The non-Weierstrass closed points are Zariski dense in $X$, since there are only finitely many Weierstrass points.
    Hence $\tr(\theta^m)=0$.
\end{proof}

\begin{theorem}\label{theo:rank_le_genus_nilpotent}
    Any cohomologically lift-independent Higgs bundle $(\calH,\theta)$ on $X$
    of rank $r\leqslant g$ is nilpotent; more precisely, $\theta^r=0$.
\end{theorem}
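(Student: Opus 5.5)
We deduce the theorem from Proposition \ref{prop:vanish_of_power_trace_of_CLI} by a pointwise Newton-identity argument on the characteristic polynomial of $\theta$. Since $X$ is a curve, $\theta\wedge\theta=0$ holds automatically, and $\theta$ is a section of $\EEnd(\calH)\otimes\Omega_X$. Its powers $\theta^m$ are sections of $\EEnd(\calH)\otimes\Omega_X^{\otimes m}$, and $\tr(\theta^m)\in\rH^0(X,\Omega_X^{\otimes m})$. By Proposition \ref{prop:vanish_of_power_trace_of_CLI}, $\tr(\theta^m)=0$ for all $1\leq m\leq g$, and in particular for all $1\leq m\leq r$ because $r\leq g$.

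The plan is to work at the generic point, or equivalently at each closed point $x\in X$. Choose a local trivialization of $\Omega_X$ near $x$, that is, a local generator $\omega$, and write $\theta=A\otimes\omega$ with $A$ a local section of $\EEnd(\calH)$. Then $\theta^m=A^m\otimes\omega^{\otimes m}$ and $\tr(\theta^m)=\tr(A^m)\,\omega^{\otimes m}$, so $\tr(A^m)=0$ for $1\leq m\leq r$.

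Since $K$ has characteristic $0$, Newton's identities express the coefficients of the characteristic polynomial of $A$ (the elementary symmetric functions $e_1,\dots,e_r$ of its eigenvalues) as polynomials with rational coefficients in the power sums $\tr(A),\dots,\tr(A^r)$. Hence all $e_k(A)=0$ and $\chi_A(T)=T^r$. By Cayley--Hamilton, applied over the local ring or equivalently fiberwise at every point and then using that $\calH$ is locally free on the reduced scheme $X$, we get $A^r=0$. Therefore $\theta^r=A^r\otimes\omega^{\otimes r}=0$ locally, hence globally, so $(\calH,\theta)$ is nilpotent.

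There is no serious obstacle beyond Proposition \ref{prop:vanish_of_power_trace_of_CLI}. The only points to check are that characteristic $0$ is needed to divide by $k$ in Newton's identities, which holds by the standing hypothesis on $K$, and that the trace identities for the twisted powers translate into trace identities for the local matrix $A$, which is immediate because $\Omega_X$ is a line bundle on a curve.
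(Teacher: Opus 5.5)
Your proposal is correct and follows essentially the same route as the paper. The paper also combines Proposition \ref{prop:vanish_of_power_trace_of_CLI}, which gives $\tr(\theta^m)=0$ for $m\leqslant r\leqslant g$, with a local trivialization of $\Omega_X$, then applies Newton's identities in characteristic zero to get characteristic polynomial $T^r$, and concludes $\theta^r=0$ by Cayley--Hamilton.
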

\begin{proof}
    By Proposition \ref{prop:vanish_of_power_trace_of_CLI}, we have
    $\tr(\theta^m)=0$ for $1\leqslant m\leqslant r$.
    After locally trivializing $\Omega_X$, Newton's identities in characteristic zero
    imply that the characteristic polynomial of the resulting endomorphism is $T^r$.
    The Cayley--Hamilton theorem therefore gives $\theta^r=0$.
\end{proof}

Combining this with Theorem \ref{theo: int lift independ to quasi independ},
we obtain the same nilpotence conclusion for any integrally lift-independent
$\frac{1}{p-1}$-Hitchin-small Higgs bundle of rank $r\leqslant g$ on a projective
smooth formal curve over $\calO_{\bfC}$ of genus $g$.
Indeed, its generic fiber has zero $r$-th Higgs power, and the same holds integrally
by $p$-torsion-freeness of the relevant locally free sheaves.

\begin{lemma}\label{lem:rank_of_end_of_nonZero_traceZero_Higgs}
    Let $(\calH,\theta)$ be a Higgs bundle of rank $r$ on $X$. 
    If $\theta \neq 0$ and $\tr(\theta)=0$, then
    $$\rk(\EEnd(\calH,\theta)) \leq (r-1)^2 + 1.$$
    Moreover, if $\tr(\theta^2)=0$ and the equality holds, then $(\calH,\theta)$ is nilpotent of type $(2,1,\cdots,1)$, i.e., $\theta^2=0$ and $\rk(\theta)=1$.
\end{lemma}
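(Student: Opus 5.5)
Since $\EEnd(\calH,\theta)$ is a subsheaf of the locally free sheaf $\EEnd(\calH)$ on the integral curve $X$, its rank equals the dimension of its fiber at the generic point. I will therefore reduce to linear algebra over the function field $F=K(X)$. Trivializing $\Omega_X$ at the generic point, $\theta$ becomes an endomorphism $A$ of $V=\calH_\eta\cong F^r$. The hypotheses become $A\neq 0$ and $\tr(A)=0$, and $\rk\EEnd(\calH,\theta)=\dim_F C(A)$, where $C(A)$ is the centralizer of $A$. Replacing $F$ by its algebraic closure $\bar F$ changes neither $\dim C(A)$ nor the trace or nilpotency conditions, so I may assume the base field is algebraically closed.

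The key input is the classical formula for the centralizer dimension. If $A$ has eigenvalues $\lambda$ with Jordan partitions $\mu^{(\lambda)}=(\mu^{(\lambda)}_1\ge\mu^{(\lambda)}_2\ge\cdots)$, then
\[
\dim C(A)=\sum_\lambda\sum_{i}(2i-1)\mu^{(\lambda)}_i=\sum_\lambda\sum_j\bigl(\mu^{(\lambda)\prime}_j\bigr)^2,
\]
where $\mu'$ is the conjugate partition. To bound this, I will argue that $\dim C(A)\le (r-1)^2+1$ for every non-scalar $A$, with equality only in specific cases, and then use $\tr A=0$ to rule out the unwanted ones.

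First, a non-scalar $A$ is either semisimple with at least two eigenvalues, or it has a Jordan block of size at least $2$. In the semisimple case with multiplicities $m_1,\dots,m_k$ ($k\ge 2$), $\dim C(A)=\sum m_i^2\le (r-1)^2+1$, with equality exactly when the multiplicities are $(r-1,1)$. In the second case, the column-square sums are maximized by taking the partition $(2,1,\dots,1)$ for a single eigenvalue, which gives $(r-1)^2+1$. Any other configuration (more eigenvalues, or a larger or additional block) strictly decreases $\sum_j(\mu'_j)^2$, because moving a box to a shorter column or splitting columns lowers the sum of squares. This comparison is the main technical step. I will carry it out by noting that $\sum_\lambda\sum_j(\mu'_j)^2$ is a sum of squares of the column lengths, which total $r$ with at least two columns, and that the sum of squares of positive integers totaling $r$, with at least two of them, is at most $(r-1)^2+1$, with equality iff the lengths are $\{r-1,1\}$.

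Next, the case where $A$ is scalar: $A=c\cdot\id$ with $\tr A=rc=0$ forces $c=0$ in characteristic zero, contradicting $\theta\neq0$. Hence $A$ is non-scalar and the bound holds. For the equality case, the column lengths are $\{r-1,1\}$. There are two possibilities. Either $A$ has two distinct eigenvalues $a\ne b$ with multiplicities $r-1,1$ and is semisimple, or $A$ has a single eigenvalue $a$ with Jordan type $(2,1,\dots,1)$. In the first case $\tr A=(r-1)a+b=0$ and $\tr A^2=(r-1)a^2+b^2=0$, which give $b=-(r-1)a$ and then $(r-1)a^2(1+(r-1))=0$, so $a=0=b$, a contradiction. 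In the second case $\tr A=ra=0$ gives $a=0$, so $A$ is nilpotent with $A^2=0$ and rank $1$. Since $\theta^2$ and the rank of $\theta$ are determined generically (as $\theta^2$ is a map of locally free sheaves, and the rank of $\theta$ is its generic rank), this yields the "moreover" statement.
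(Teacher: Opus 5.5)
Your proof is correct and takes essentially the same route as the paper. Both reduce to the centralizer $C(A)$ over the function field and use the formula $\dim C(A)=\sum_{\lambda,j}(\mu^{(\lambda)\prime}_j)^2$; your conjugate-partition entries are exactly the paper's $d_{\lambda,j}$. Both then apply the elementary bound on sums of squares of at least two positive integers totaling $r$, with equality only for $\{r-1,1\}$, and use the two trace conditions to exclude the semisimple equality case. Your intermediate ``moving a box'' comparison is superfluous, since the sum-of-squares argument already gives the bound and the equality case.
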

\begin{proof}
    Put $F=K(X)$, the field of rational functions on $X$.
    Choose a nonempty open subset $U\subset X$ on which both $\calH$ and $\Omega_X$
    are trivial, and choose a generator $\omega$ of $\Omega_X|_U$. Writing
    $\theta|_U=A\otimes\omega$ and extending scalars to $F$, we obtain
    $A\in\End_F(V)$, where $V=F^r$. Since localization is exact,
    \[
        \rk\bigl(\EEnd(\calH,\theta)\bigr)
        =\dim_F C_F(A),\qquad
        C_F(A):=\{B\in\End_F(V):BA=AB\}.
    \]
    The assumptions imply $A\neq0$ and $\tr(A)=0$. As $F$ has
    characteristic zero, $A$ is not scalar and $r\geq2$.

    Extending scalars to an algebraic closure $\overline F$ does not change
    the dimension of the centralizer, which is the kernel of the linear
    map $[A,-]$. For each eigenvalue $\lambda$ of $A$, let
    $d_{\lambda,j}$ be the number of Jordan blocks for $\lambda$ of size
    at least $j$. The Jordan-block formula for the centralizer gives
    \[
        \dim_F C_F(A)=\sum_{\lambda,j}d_{\lambda,j}^{\,2},\qquad
        \sum_{\lambda,j}d_{\lambda,j}=r.
    \]
    Indeed, two Jordan blocks of sizes $a,b$ for the same eigenvalue
    contribute $\min\{a,b\}$ to the dimension, and blocks with distinct
    eigenvalues contribute zero. Since $A$ is not scalar, at least two
    of the integers $d_{\lambda,j}$ are positive. For positive integers
    with sum $r$ and at least two terms, the sum of their squares is at
    most $(r-1)^2+1$, with equality precisely when the terms are $r-1$
    and $1$. This proves the rank bound.

    Suppose now that equality holds and $\tr(\theta^2)=0$, so that
    $\tr(A^2)=0$. The equality case just described has two possibilities.
    If $A$ has two distinct eigenvalues $\lambda,\mu$, it is semisimple
    with multiplicities $r-1$ and $1$. The trace conditions give
    \[
        (r-1)\lambda+\mu=0,\qquad
        (r-1)\lambda^2+\mu^2=0.
    \]
    Substitution yields $r(r-1)\lambda^2=0$, hence
    $\lambda=\mu=0$, a contradiction. Otherwise, $A$ has a single
    eigenvalue $\lambda$ and Jordan block sizes $(2,1,\ldots,1)$.
    Since $0=\tr(A)=r\lambda$, we have $\lambda=0$. Thus $A^2=0$
    and $\rk(A)=1$ over $\overline F$, and hence also over $F$.

    A morphism between vector bundles on the integral algebraic curve
    $X$ which vanishes over $F$ vanishes identically. Consequently,
    $\theta^2=0$, and the coherent image of $\theta$ has rank $1$.
    This is the meaning of $\rk(\theta)=1$ in the statement, and proves the assertion.
\end{proof}

\subsection{Small rank semistable cohomologically lift-independent Higgs bundles have zero Higgs fields}

Throughout this subsection, let $X$ be a smooth projective connected curve over $K$ of genus $g\geqslant2$.

The main goal of this subsection is to prove the following result.

\begin{theorem}\label{theo: r<(g-1)^{1/2} quasi-independ to 0}
    For any semistable and cohomologically lift-independent Higgs bundle $(\calH,\theta)$ whose rank $r$ satisfies $r\leqslant \sqrt{g} + 1$, $\theta$ must be zero.
\end{theorem}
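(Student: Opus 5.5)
The plan is to argue by contradiction: assume $\theta\neq0$ and use cohomological lift-independence to force $\rH^0$ of a quotient sheaf of $\EEnd(\calH,\theta)$ to be large. Semistability together with a Clifford-type bound will then force it to be small. Put $Z=\EEnd(\calH,\theta)$; by Lemma \ref{lemma: Basic lemma}, $\tr\theta=0$. If $\theta\neq0$, the map $\theta\colon T_X\to Z$ is injective, since $T_X$ is a line bundle and $\theta$ is nonzero at the generic point. Let $L\subset Z$ be its saturation and put $Q=Z/L$, a vector bundle of rank $n-1$ with $n=\rk Z$. Lemma \ref{lem:rank_of_end_of_nonZero_traceZero_Higgs} gives $n\leq (r-1)^2+1$, so $\rk Q\leq (r-1)^2$. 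Since $T_X\subset L$, we have $\deg L\geq 2-2g$.

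The first step converts lift-independence into a lower bound on $h^0(Q)$. The map $\rH^1(T_X)\to\rH^1(Z)$ factors through $\rH^1(L)$. As $T_X\to L$ has torsion cokernel, $\rH^1(T_X)\to\rH^1(L)$ is surjective, and I will aim to show that $\rH^1(L)\to\rH^1(Z)$ is zero. The cleanest case is $L=T_X$, i.e.\ $\theta$ nowhere vanishing as a map $T_X\to Z$; otherwise I will run the argument with $T_X$ in place of $L$ and accept a torsion part in $Q$, whose $h^0$ only adds to the count. With the exact sequence $\rH^0(Z)\to\rH^0(Q)\to\rH^1(T_X)\xrightarrow{0}\rH^1(Z)$, this yields
\[
h^0(Q)\geq h^1(T_X)=3g-3 .
\]

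The second step is an upper bound from semistability. In characteristic $0$, $\EEnd(\calH,\theta)$ as a Higgs bundle, with field $[\theta,-]$, is semistable of degree $0$. The subsheaf $Z=\ker[\theta,-]$ is $[\theta,-]$-invariant with zero induced field, so every subsheaf of $Z$ has degree $\leq 0$. Hence every subsheaf of $Q$ has degree $\leq -\deg L\leq 2g-2$, and $\deg Q=\deg Z-\deg L\leq 2g-2$. I then apply Clifford's theorem for vector bundles (Re / Brambila-Paz--Grzegorczyk--Newstead) to the Harder--Narasimhan pieces of $Q$. The pieces with slope in $[0,2g-2]$ satisfy $h^0\leq \deg/2+\rk$, and the pieces with negative slope contribute $h^0=0$; the negative-slope pieces also lower the total degree, which is harmless. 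Summing over the pieces gives
\[
h^0(Q)\leq \tfrac{1}{2}\deg^{+}Q+\rk Q\leq (g-1)+(r-1)^2 ,
\]
where $\deg^{+}$ is the sum over nonnegative-slope pieces. Bounding $\deg^{+}Q\leq 2g-2$ uses that the maximal destabilizing subsheaves of $Q$ have degree $\leq 2g-2$, applied to the union of the nonnegative-slope part.

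Combining the two bounds gives $3g-3\leq g-1+(r-1)^2$, that is, $(r-1)^2\geq 2g-2$. This contradicts $r\leq\sqrt g+1$ when $g\geq3$, since then $g<2g-2$. The boundary case $g=2$ allows $r\leq 2$. For $r=1$, Lemma \ref{lemma: Basic lemma} already gives $\theta=0$. For $r=2$ the same lemma gives $\theta^2=0$, so $\ker\theta$ is a line subbundle $M$. Semistability with $\theta(\calH)\subset M\otimes\Omega_X$ gives $\deg M\leq\mu(\calH)$ and $\deg(\calH/M)-\deg M\leq 2g-2$, and $Z$ is explicit. I would then redo the count directly, or use the equality case of Lemma \ref{lem:rank_of_end_of_nonZero_traceZero_Higgs} to sharpen $h^0(Q)$ to at most $g-1+\rk Q-1$. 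I expect the main obstacle to be the Clifford step: getting a clean bound on $h^0(Q)$ when $Q$ is not semistable, when $L\neq T_X$, and in the $g=2$ edge case. If the general Clifford estimate loses too much, the fallback is to use $\rH^0(Z)\to\rH^0(Q)$ more carefully, together with the bound $h^0(Z)\leq\rk Z$ that follows from the degree condition on subsheaves of $Z$.
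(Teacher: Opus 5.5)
There is a genuine gap in Step 1, in exactly the case $L\neq T_X$ that you deferred, and it cancels the gain you expect from Clifford. For the torsion-free quotient $Q=Z/L$, your exact sequence gives $h^0(Q)\geq h^1(L)$, not $h^1(T_X)$. Put $\ell:=\deg L-(2-2g)\geq 0$. Then precisely
\[
h^0(Q)=h^0(Z)-\chi(L)=h^0(Z)+3g-3-\ell .
\]
If you instead keep $T_X$ and work with $Z/T_X$, you do get $h^0\geq 3g-3$. But then the torsion $L/T_X$ contributes its full length $\ell$ to the upper bound too, because Clifford only halves the degree of the torsion-free part. So the torsion does not ``only add to the count''; it adds to both sides. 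Either way, combining with your Clifford bound $h^0(Q)\leq\frac12(2g-2-\ell)+\rk Q$ (which is correct) gives
\[
h^0(Z)+2g-2-\tfrac{\ell}{2}\leq \rk Q\leq (r-1)^2 .
\]
Since $L\subset Z$ forces $\deg L\leq 0$, i.e.\ $\ell\leq 2g-2$, the worst case $\deg L=0$ yields only $(r-1)^2\geq g$. This uses $\id\in\rH^0(Z)$; without it, as in your count, you get only $g-1$. The worst case is the typical one: in Theorem~\ref{theo: nonzero-nilp-deg0-ss-quasi-indep-Higgs} the saturation of $\theta(T_X)$ in $Z$ is the degree-$0$ line bundle $\calL$. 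So Clifford gains nothing over the paper's cruder estimate $h^0\leq\deg+\rk$, and your inequality $(r-1)^2\geq 2g-2$ is not established.

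What remains open is the boundary case $g=(r-1)^2$ (e.g.\ $g=4$, $r=3$). The hypothesis $r\leq\sqrt g+1$ allows it, and it is the real content of the paper's proof. There all inequalities must be equalities, and the paper extracts the following chain:
\begin{enumerate}
\item $h^0(\EEnd(\calH,\theta)^{\tr=0})=0$ and $\rk\EEnd(\calH,\theta)^{\tr=0}=(r-1)^2$.
\item The quotient by $T_X$ is globally generated with trivial torsion-free part; this uses the Clifford half of Lemma~\ref{lem:globally-generated-bound}.
\item Hence the saturation $L$ is a \emph{nontrivial} degree-$0$ line bundle, and $\gr_{\mathrm{JH}}=L\oplus\calO_X^{\oplus((r-1)^2-1)}$.
\item The equality case of Lemma~\ref{lem:rank_of_end_of_nonZero_traceZero_Higgs} makes $\theta$ nilpotent of type $(2,1,\dots,1)$.
\item Finally, take the endomorphism $N\colon\calH\to\calH\otimes L^{-1}$ coming from $L\subset Z$. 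Semistability makes $\im N$ saturated, and a Jordan--H\"older comparison on the induced filtration of $\EEnd(\calH,\theta)^{\tr=0}$ forces $L\cong\calO_X$, a contradiction.
\end{enumerate}
Your fallback $h^0(Z)\leq\rk Z$ does not replace this rigidity argument. Separately, your special treatment of $g=2$ is unnecessary: there $(r-1)^2\leq 1<g$. With the correct inequality $(r-1)^2\geq g$, only perfect squares $g\geq 4$ survive.
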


Combining Theorem \ref{theo: r<(g-1)^{1/2} quasi-independ to 0} with Theorem \ref{theo: int lift independ to quasi independ}, we obtain the following:
\begin{theorem}\label{theo: rk leq sqrt g-1 ss+int independ to 0}
    Let $\frakX$ be a projective smooth formal curve over $\calO_{\bfC}$ of genus $g \geqslant 2$. 
    Let $(\calH, \theta)$ be a $\frac{1}{p-1}$-Hitchin small Higgs bundle on $\frakX$ whose generic fiber is semistable of rank $r\leqslant \sqrt{g} +1 $.
    Then $(\calH, \theta)$ is integrally lift-independent if and only if $\theta=0$.
\end{theorem}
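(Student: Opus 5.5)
The statement is Theorem \ref{theo: rk leq sqrt g-1 ss+int independ to 0}, and it should follow by combining Theorem \ref{theo: r<(g-1)^{1/2} quasi-independ to 0} with Theorem \ref{theo: int lift independ to quasi independ}, plus a short check of the converse direction. I would prove the two implications separately.

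For the implication ``$\theta=0\Rightarrow$ integrally lift-independent'', I would use Lemma \ref{lem:action-of-lift}. Let $\widetilde{\frakX}_1,\widetilde{\frakX}_2$ be any two flat $A_2$-liftings. The Higgs bundle $\frakS_{\widetilde{\frakX}_2}\frakS_{\widetilde{\frakX}_1}^{-1}(\calH,0)$ is given by the descent datum $((M_\lambda,0),\varphi_{\lambda\mu}\circ\exp(\rho\theta(a_{\lambda\mu})))$. Since $\theta=0$, each $\exp(\rho\theta(a_{\lambda\mu}))$ is the identity, so this datum coincides with that of $(\calH,0)$. Hence $\frakS_{\widetilde{\frakX}_2}\frakS_{\widetilde{\frakX}_1}^{-1}(\calH,0)\cong(\calH,0)$. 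Applying $\frakS_{\widetilde{\frakX}_2}^{-1}$ gives $\frakS_{\widetilde{\frakX}_1}^{-1}(\calH,0)\cong\frakS_{\widetilde{\frakX}_2}^{-1}(\calH,0)$, which is Definition \ref{def:int-lift-indep-Higgs}.

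For the implication ``integrally lift-independent $\Rightarrow\theta=0$'', I would argue in three steps.

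\textbf{Step 1.} Theorem \ref{theo: int lift independ to quasi independ} applies because $\frakX$ is projective and hence proper. It shows that
\[
\rH^1(X,T_X(1))\xrightarrow{\rH^1(\theta)}\rH^1(X,\EEnd(\calH[1/p],\theta))
\]
is zero.

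\textbf{Step 2.} Using the fixed identification $\bfC(1)\cong\bfC$, this says exactly that $(\calH[1/p],\theta[1/p])$ is cohomologically lift-independent on $X$. Here $X$ is a proper rigid curve, which is algebraic by the conventions of this section, of genus $g\ge 2$. The generic fibre is semistable by hypothesis and has rank $r\le\sqrt g+1$. Theorem \ref{theo: r<(g-1)^{1/2} quasi-independ to 0} then gives $\theta[1/p]=0$.

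\textbf{Step 3.} I need to pass back to the integral level. The integral Higgs field is a map
\[
\theta\colon\calH\to\calH\otimes\rho\widehat{\Omega}^1_{\frakX}(-1)
\]
between locally free $\calO_{\frakX}$-modules, and these are $p$-torsion-free. Hence $\theta$ injects into its rationalization, so $\theta[1/p]=0$ forces $\theta=0$.

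The only step needing care is the compatibility of twists in Step 2. The map $\rH^1(\theta)$ produced by Theorem \ref{theo: int lift independ to quasi independ} must be identified with the map in the definition of cohomological lift-independence. This identification comes from the chosen trivialization of $\bfC(1)$, under which $\rho\widehat\Omega^1(-1)[1/p]$ corresponds to $\Omega^1_X$. The trivialization only rescales $\theta$ by a unit, so neither the vanishing of $\rH^1(\theta)$ nor semistability is affected.

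Beyond this, the argument is a direct combination of the earlier results. All the substantive difficulty lies in Theorem \ref{theo: r<(g-1)^{1/2} quasi-independ to 0}, which is taken as given.
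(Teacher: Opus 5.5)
Your proposal is correct and follows the paper's proof. The ``if'' direction comes from Lemma \ref{lem:action-of-lift}, since the twisting factors $\exp(\rho\theta(a_{\lambda\mu}))$ are trivial when $\theta=0$. The ``only if'' direction combines Theorem \ref{theo: int lift independ to quasi independ} with the vanishing theorem for semistable cohomologically lift-independent Higgs bundles of rank $r\leqslant\sqrt g+1$. Your Step 3, which recovers $\theta=0$ from $\theta[1/p]=0$ via $p$-torsion-freeness of the locally free sheaves, spells out a point the paper leaves implicit in its proof of this statement.
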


\begin{proof}
    The `if' part is a direct consequence of Lemma \ref{lem:action-of-lift}. 
    For the `only if' part, Theorem \ref{theo: int lift independ to quasi independ} means that $(\calH, \theta)$ is cohomologically lift-independent on the generic fiber of $\frakX$. 
    The claim then follows from Theorem \ref{theo: r<(g-1)^{1/2} quasi-independ to 0}.
\end{proof}

\subsubsection{Globally generated coherent sheaves} 
As a preliminary step, we require a more refined understanding of globally generated vector bundles.

\begin{lemma}\label{lem:Bertini-type}  
    Let $\calE$ be a globally generated vector bundle of rank $r \geqslant 2$ on $X$. 
    Then there exists an injection $\calO_{X} \to \calE$ such that the quotient sheaf is a vector bundle.
\end{lemma}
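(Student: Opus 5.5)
The plan is the standard Bertini-type dimension count for sections of a globally generated bundle on a curve. Put $V=\rH^0(X,\calE)$, a finite-dimensional $K$-vector space. Global generation says that the evaluation map $V\otimes_K\calO_X\to\calE$ is surjective, so for every closed point $x\in X$ the evaluation map
\[
    \mathrm{ev}_x\colon V\longrightarrow\calE(x)
\]
is surjective. A section $\sigma\in V$ gives $\calO_X\to\calE$, $1\mapsto\sigma$. If $\sigma\neq0$ this map is injective, since $X$ is integral and $\calE$ is torsion-free. On the smooth curve $X$, the quotient $\calE/\sigma\calO_X$ is locally free precisely when it is torsion-free, that is, precisely when $\sigma(x)\neq0$ in $\calE(x)$ for every closed point $x$. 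So it suffices to find $\sigma\in V$ that vanishes at no point of $X$.

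For this, I would consider the incidence variety
\[
    Z=\{(\sigma,x)\in V\times X:\sigma(x)=0\}.
\]
This is the kernel of the surjective bundle map $V\otimes\calO_X\to\calE$ on $X$, so it is a vector bundle over $X$ of rank $\dim V-r$. Hence $Z$ is a closed subvariety of $V\times X$ of dimension $\dim V-r+1$. Because $r\geq2$, this is at most $\dim V-1$, so the projection of $Z$ to $V$ is not dominant. Its image is closed, since $X$ is proper, and therefore a proper closed subset of $V$. As $K$ is algebraically closed, hence infinite, $V(K)$ is not contained in this image, and any $K$-point $\sigma$ outside it is a nowhere-vanishing section. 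Such a $\sigma$ is automatically nonzero, because $\calE\neq0$ and $X\neq\emptyset$.

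The only delicate point is to carry this out purely algebraically over an arbitrary algebraically closed $K$. Following the convention of this section, we may treat $X$ as an algebraic curve via rigid GAGA, so the incidence construction is scheme-theoretic. I would make it concrete by regarding $Z$ as the total space of the kernel bundle $\ker(V\otimes\calO_X\to\calE)$, which is a vector bundle because the map is surjective, and then using the dimension of the image of a morphism between varieties. In the rigid case, the coherent sheaf and its sections agree with the algebraic ones by GAGA, so the same section $\sigma$ works there too.
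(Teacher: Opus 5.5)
Your proposal is correct and follows essentially the same argument as the paper: the incidence variety of pairs (section, point) where the section vanishes, with fibers of dimension $h^0(X,\calE)-r$ by global generation. This gives total dimension $h^0(X,\calE)-r+1<h^0(X,\calE)$ because $r\geqslant2$, so some section vanishes nowhere. You also spell out details the paper leaves implicit: realizing the incidence variety as the total space of the kernel bundle, closedness of its image, and why a nowhere-vanishing section has locally free cokernel on a smooth curve.
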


\begin{proof}
    Consider the incidence variety
    \[ I = \{(x, s) \in X \times \mathrm{H}^0(X,\calE) : s_{x} \in \frakm_{x} \calE_{x} \}. \]
    Since $\calE$ is globally generated, the evaluation map $\rH^0(X, \calE) \to \calE_{x}/\frakm_{x} \calE_{x}$ is surjective. 
    Thus, the dimension of the fiber $I_{x}$ is $\dim \rH^0(X, \calE) - r$. 
    Consequently, $\dim I = \dim X + \dim I_{x} = h^0(X, \calE) - r + 1$. 
    Since $\dim p_2(I) \leqslant \dim I < h^0(X, \calE)$, the set $\rH^0(X, \calE) \setminus p_2(I)$ is non-empty. 
    Choosing $s \in \rH^0(X, \calE) \setminus p_2(I)$ yields an injection $\calO_{X} \to \calE$ with a torsion-free quotient.
\end{proof}

\begin{lemma}\label{lem:globally-generated-bound}
    Let $\calF$ be a globally generated coherent sheaf on $X$. Then
    $$ h^0(X, \calF) \leqslant \deg(\calF) + \rk(\calF).$$
    Moreover, if $\calF$ is also a vector bundle of degree $\leq 2g$ on $X$, we have
    $$ h^0(X, \calF) \leqslant \frac{1}{2}\deg(\calF) + \rk(\calF).$$
\end{lemma}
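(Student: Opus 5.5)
The plan is induction on the rank, peeling off trivial subbundles with Lemma \ref{lem:Bertini-type}, after first reducing the coherent case to the locally free case.

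\emph{Reduction to vector bundles (first inequality).} Let $\calF$ be globally generated and consider
\[
0\to\calF_{\tor}\to\calF\to\calF_{\mathrm{tf}}\to0 .
\]
On the smooth curve $X$, the quotient $\calF_{\mathrm{tf}}$ is a vector bundle of rank $\rk(\calF)$. It is globally generated, being a quotient of $\calF$. Moreover $\deg\calF=\ell+\deg\calF_{\mathrm{tf}}$, where $\ell$ is the length of $\calF_{\tor}$, and $h^0(\calF_{\tor})=\ell$. Left exactness of $\rH^0$ gives $h^0(\calF)\le \ell+h^0(\calF_{\mathrm{tf}})$. So it suffices to prove $h^0(\calE)\le\deg\calE+\rk\calE$ for a globally generated vector bundle $\calE$.

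\emph{Induction on the rank $r$.} If $r=0$ there is nothing to prove. If $r\ge2$, Lemma \ref{lem:Bertini-type} gives a sequence $0\to\calO_X\to\calE\to\calE'\to0$. Here $\calE'$ is a vector bundle of rank $r-1$, it is globally generated as a quotient of $\calE$, and $\deg\calE'=\deg\calE$. Then
\[
h^0(\calE)\le 1+h^0(\calE')\le 1+\deg\calE+(r-1).
\]
This reduces both statements to line bundles, since the degree is unchanged at each step, so the hypothesis $\deg\le 2g$ propagates to $\calE'$.

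\emph{Line bundle case.} A globally generated line bundle $\calL$ has a nonzero section, so $d=\deg\calL\ge0$.
\begin{itemize}
\item For the weak bound, $h^0(\calL)\le d+1$ holds for any line bundle of degree $d\ge0$: if $\calL$ has a section, then $h^0(\calL(-x))\ge h^0(\calL)-1$ at each point, and induction on $d$ reduces to the case $d=0$, where $h^0\le1$.
\item For the Clifford-type bound, assume $0\le d\le 2g$ and split into cases.
\begin{itemize}
\item If $\calL$ is special, i.e.\ $h^1(\calL)\neq0$, then $d\le 2g-2$ and Clifford's theorem gives $h^0(\calL)\le d/2+1$.
\item If $h^1(\calL)=0$, Riemann--Roch gives $h^0(\calL)=d-g+1$. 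This is at most $d/2+1$ exactly when $d\le 2g$, which is our hypothesis. This covers in particular $d=2g-1,2g$, where $h^1(\calL)=h^0(\Omega_X\otimes\calL^\vee)=0$ for degree reasons.
\end{itemize}
\end{itemize}
Feeding this into the induction gives $h^0(\calE)\le (r-1)+d/2+1=\tfrac12\deg\calE+\rk\calE$.

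The main point to get right is that the Clifford step needs exactly $\deg\le 2g$, which is why the hypothesis appears in the statement. Everything else follows formally from Lemma \ref{lem:Bertini-type}, whose quotient is again a globally generated bundle of the same degree.
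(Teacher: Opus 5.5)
Your proof is correct and follows the paper's argument essentially step for step. You reduce to the torsion-free quotient, then peel off trivial subbundles via Lemma~\ref{lem:Bertini-type}, which preserves both degree and global generation, and you finish the line-bundle case with $h^0\le d+1$ for the first bound and the Riemann--Roch/Clifford case split on $h^1(\calL)$ for the second. The only cosmetic differences are that you prove $h^0(\calL)\le\deg\calL+1$ by induction on the degree rather than citing it as well known, and that you use the inequality $h^0(\calF)\le h^0(\calF_{\tor})+h^0(\calF_{\mathrm{tf}})$ where the paper states the equality, which holds because $\rH^1(\calF_{\tor})=0$.
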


\begin{proof}
    Considering the canonical short exact sequence $0 \to \calF_{\tor} \to \calF \to \calF_{\operatorname{tf}} \to 0$, we have $h^0(X, \calF) = h^0(X, \calF_{\tor}) + h^0(X, \calF_{\operatorname{tf}})$ and $h^0(X, \calF_{\tor}) = \deg(\calF_{\tor})$. 
    Without loss of generality, we assume $\calF$ is a vector bundle.
    
    If $\calF=\calL$ is a line bundle with $h^0(X, \calL) > 0$, it is well-known that $h^0(X, \calL) \leqslant \deg(\mathcal{L}) + 1$.
    
    For $\rk(\calF) = r \geqslant 2$, Lemma \ref{lem:Bertini-type} provides a short exact sequence $0 \to \calO_{X} \to \calF \to \calF' \to 0$, where $\calF'$ is a globally generated vector bundle. By induction on $r$, we have
    \[ h^0(X, \calF) \leqslant 1 + h^0(X, \calF') \leqslant 1 + \deg(\calF') + (r-1) = \deg(\calF) + \rk(\calF). \]
    
    For the second inequality, the zero bundle is immediate. Let
    $r=\rk(\calF)\geqslant1$ and $d=\deg(\calF)\leqslant2g$.
    Each application of Lemma \ref{lem:Bertini-type} gives a globally
    generated vector bundle quotient of rank one less and of the same
    degree. Iterating, we obtain a globally generated line bundle $\calL$
    of degree $d$ such that
    \[
        h^0(X,\calF)\leqslant r-1+h^0(X,\calL).
    \]
    If $h^1(X,\calL)=0$, Riemann--Roch gives
    \[
        h^0(X,\calL)=d+1-g\leqslant\frac d2+1,
    \]
    where the inequality uses $d\leqslant2g$.
    If $h^1(X,\calL)>0$, global generation implies $h^0(X,\calL)>0$,
    so $\calL\cong\calO_X(D)$ for an effective special divisor $D$.
    Clifford's theorem \cite[IV, Theorem~5.4]{Hartshorne1977} therefore gives
    \[
        h^0(X,\calL)\leqslant\frac12\deg(\calL)+1=\frac d2+1.
    \]
    In either case,
    \[
        h^0(X,\calF)\leqslant r-1+\frac d2+1
        =\frac12\deg(\calF)+\rk(\calF),
    \]
    as required.
\end{proof}

\begin{lemma}\label{lem:centralizer-degree-bound}
    Let $(\calH, \theta)$ be a semistable Higgs bundle on $X$. 
    Then any subsheaf $\calF \subset \EEnd(\calH, \theta)$ satisfies $\deg(\calF) \leqslant 0$.
\end{lemma}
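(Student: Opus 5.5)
The plan is to turn a subsheaf of $\EEnd(\calH,\theta)$ into a $\theta$-invariant subsheaf of $\calH$ and then use semistability of $(\calH,\theta)$. Set $\calF\subset\EEnd(\calH,\theta)$. Since degrees of subsheaves are bounded by degrees of their saturations, we may replace $\calF$ by its saturation in $\EEnd(\calH,\theta)$. Thus we may assume $\calF$ is a subbundle, of rank $s\ge1$; the case $\calF=0$ is trivial.

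The first step is a standard fact. If $(\calH,\theta)$ is semistable, then so is $\EEnd(\calH)=\calH^\vee\otimes\calH$ with the induced Higgs field $[\theta,-]$, and it has degree $0$. Over $K$ of characteristic $0$, tensor products of semistable Higgs bundles are semistable. This is Simpson's theorem over $\bC$, and it transfers to any algebraically closed $K$ of characteristic $0$ by a Lefschetz principle argument. Alternatively, one can use uniqueness of the Harder--Narasimhan filtration and compatibility with base change. I would cite this rather than reprove it.

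The second step is to observe that $\EEnd(\calH,\theta)=\ker([\theta,-])$, so any subsheaf $\calF$ of it is annihilated by the Higgs field $[\theta,-]$. In particular, $\calF$ is a $[\theta,-]$-invariant subsheaf of the Higgs bundle $(\EEnd(\calH),[\theta,-])$, since its image under the field is $0\subset\calF\otimes\Omega_X$. Semistability of $(\EEnd(\calH),[\theta,-])$, which has slope $0$, then gives $\mu(\calF)\le0$, so $\deg\calF\le0$.

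The main obstacle is the first step, namely semistability of the tensor product. If one wants to avoid invoking Simpson's analytic result, I would try the following direct route. Let $\calF\subset\EEnd(\calH)$ be killed by $[\theta,-]$, and suppose $\mu(\calF)>0$. Choose the maximal destabilizing piece, so that we may assume $\calF$ is semistable with $\mu(\calF)>0$. Consider the evaluation map $\calF\otimes\calH\to\calH$. Its image is $\theta$-invariant because elements of $\calF$ commute with $\theta$. One then compares slopes using the Harder--Narasimhan filtration of $\calH$ as a Higgs bundle, which is trivial here, together with the bound $\mu_{\max}(\calF\otimes\calH)$. This needs semistability of $\calF\otimes\calH$ with $\calF$ carrying the zero Higgs field, which is again a tensor product statement. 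So I expect to fall back on citing semistability of tensor products in characteristic $0$.
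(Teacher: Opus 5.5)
Your proof is correct and is essentially the paper's argument. You show that $(\EEnd(\calH),[\theta,-])$ is a semistable Higgs bundle of degree $0$ via Simpson's tensor-product result, transferred from $\bC$ to $K$, and then observe that any subsheaf of $\EEnd(\calH,\theta)=\ker([\theta,-])$ is a Higgs subsheaf, so it has slope at most $0$; the saturation reduction and the ``direct route'' sketched in your last paragraph are not needed.
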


\begin{proof}
    Since $(\calH, \theta)$ is semistable, the dual Higgs bundle $(\calH^\vee, \theta^\vee=-\theta^{\mathrm{t}})$ is also semistable. 
    Consequently, the endomorphism Higgs bundle $\EEnd(\calH) \cong  (\calH \otimes \calH^\vee, \theta \otimes 1 + 1 \otimes \theta^\vee)$ is semistable by \cite[Corollary 3.8]{Simpson1992}.
    The cited result over $\mathbb C$ applies over $K$ by descent to a finitely generated subfield and invariance of semistability under extensions of algebraically closed fields.
    The degree of the bundle $\EEnd(\calH)$ is given by
    $$ \deg(\EEnd(\calH)) = \deg(\calH \otimes \calH^\vee) = \rk(\calH^\vee)\deg(\calH) + \rk(\calH)\deg(\calH^\vee) = 0. $$
    As $\EEnd(\calH,\theta) = \ker([\theta, -])$, any subsheaf $\calF \subset \EEnd(\calH,\theta) $ is a sub-Higgs-sheaf of $\EEnd(\calH)$. 
    The semistability of $\EEnd(\calH)$ then implies
    $$ \deg(\calF) = \rk(\calF)\mu(\calF) \leqslant \rk(\calF)\mu(\EEnd(\calH)) = 0, $$
    where $\mu$ denotes the slope of the bundle.
\end{proof}

Now we are ready to prove Theorem \ref{theo: r<(g-1)^{1/2} quasi-independ to 0}.

\begin{proof}[Proof of Theorem \ref{theo: r<(g-1)^{1/2} quasi-independ to 0}]
    Assume for the sake of contradiction that $\theta: T_{X} \to \EEnd(\calH, \theta)$ is nonzero. 
    By Lemma \ref{lemma: Basic lemma}, $\theta: T_X \to \EEnd(\calH, \theta)$ factors through the trace-zero subbundle 
    $\EEnd(\calH,\theta)^{\tr=0}$ of $\EEnd(\calH,\theta)$. 
    Since $T_X$ is a line bundle and $\EEnd(\calH,\theta)^{\tr=0}$ is torsion-free, we have the short exact sequence
    $$ 0 \to T_X \xrightarrow[]{\theta} \EEnd(\calH,\theta)^{\tr=0} \to \calQ \to 0. $$
    This induces the long exact sequence
    $$0 \xrightarrow[]{} \rH^0(X,T_{X}) \xrightarrow[]{} \rH^0(X,\EEnd(\calH,\theta)^{\tr=0}) \xrightarrow[]{}\rH^0(X, \calQ) \xrightarrow[]{\partial} \rH^1(X, T_X) \xrightarrow[]{} 0$$
    in cohomology by the cohomological lift-independence condition $\rH^1(\theta)=0$. Consequently,
    \begin{equation}\label{ineq:long_exact_sequence}
         h^0(X, \calQ) = h^0(X,\EEnd(\calH,\theta)^{\tr=0}) - \chi(X, T_X) = h^0(X,\EEnd(\calH,\theta)^{\tr=0}) + 3g-3\geq 3g - 3.
    \end{equation}
    Let $\calQ' = \im(\rH^0(X, \calQ) \otimes \calO_{X} \to \calQ)$. Then $\calQ'$ is globally generated and $h^0(X, \calQ') = h^0(X, \calQ)$.
    Let $\calF = \calQ' \times_{\calQ} \EEnd(\calH, \theta)^{\tr=0} \subset \EEnd(\calH, \theta)^{\tr=0}$. 
    By Lemma \ref{lem:centralizer-degree-bound}, we have $\deg(\calF) \leqslant 0$. 
    From the short exact sequence
    $$ 0 \to T_X \to \calF \to \calQ' \to 0, $$
    we obtain
    \begin{equation}\label{ineq:degree}
        \deg(\calQ') = \deg(\calF) - \deg(T_X) \leqslant 0 - (2 - 2g) = 2g - 2.
    \end{equation}
    The rank also satisfies 
    \begin{equation}\label{ineq:rank}
        \rk(\calQ') = \rk(\calF) - 1 \leqslant \rk(\EEnd(\calH, \theta)^{\tr=0}) - 1 \leqslant (r-1)^2 - 1.
    \end{equation}
    Applying Lemma \ref{lem:globally-generated-bound}, we have
    \begin{equation}\label{ineq:deg-rank}
        h^0(X, \calQ') \leqslant \deg(\calQ') + \rk(\calQ') \leqslant (2g - 2) + ((r-1)^2 - 1) = 2g + (r-1)^2 - 3. 
    \end{equation}
    Thus,
    $$ 3g - 3 \leqslant h^0(X, \mathcal{Q}) = h^0(X, \mathcal{Q}') \leqslant 2g + (r-1)^2 - 3,$$
    which implies $g \leqslant (r-1)^2$. This forces $g = (r-1)^2$.

    Assume that $g = (r-1)^2$ and $\theta$ is nonzero. 
    In this case, the inequalities (\ref{ineq:long_exact_sequence}), (\ref{ineq:degree}), (\ref{ineq:rank}) and (\ref{ineq:deg-rank}) are equalities.
    In particular, the following conditions hold: 
    \begin{enumerate}
        \item[(a)] $\deg(\calF)=0,\deg(\calQ')= 2g - 2$.
        \item[(b)] $\rk(\calF)=\rk (\EEnd(\calH,\theta)^{\tr=0})=(r-1)^2$.
        \item[(c)] $h^0(X, \calQ') = \deg(\calQ') + \operatorname{rk}(\calQ')$.
        \item[(d)] $h^0(X,\EEnd(\calH,\theta)^{\tr=0})=0$.
    \end{enumerate}
    
    Conditions (a) and (b), together with Lemma \ref{lem:centralizer-degree-bound}, imply that $\calF = \EEnd(\calH,\theta)^{\tr=0}$, so $\calQ =\calQ'$ is globally generated. 
    By Lemma \ref{lem:globally-generated-bound}, we deduce from conditions (a) and (c) that $\deg (\calQ/\calQ_{\tor})=0$. 
    We conclude that $\calQ/\calQ_{\tor} \cong \calO^{\oplus((r-1)^2-1)}_{X}$. 
    Let $\calL$ be the saturation of $T_{X}$ in $\EEnd(\calH,\theta)^{\tr=0}$. We obtain a short exact sequence 
    $$ 0 \to \calL \xrightarrow[]{\mathrm{incl}} \EEnd(\calH,\theta)^{\tr=0} \to \calQ/\calQ_{\tor}=  \calO^{\oplus((r-1)^2-1)}_{X}\to 0.$$
    Combining this with condition (d), we deduce that $\calL$ is a nontrivial line bundle of degree $0$. 
    In particular, the Jordan--H\"older filtration of $\EEnd(\calH,\theta)^{\tr=0}$ has the associated graded object 
    $$\gr(\EEnd(\calH,\theta)^{\tr=0})=\calL \oplus \calO^{\oplus((r-1)^2-1)}_{X}.$$

    By Lemma \ref{lemma: Basic lemma}, we have $\tr(\theta)=\tr(\theta^2)=0$. 
    Applying Lemma \ref{lem:rank_of_end_of_nonZero_traceZero_Higgs} to condition (b), we deduce that $(\calH,\theta)$ must be nilpotent of type $(2,1,\cdots,1)$. 
    The inclusion $\calL\subset \EEnd(\calH,\theta)$ induces a morphism $N: \calH \to \calH \otimes \calL^{\otimes (-1)}$ commuting with $\theta$. 
    Consider the filtration 
    $$
    0 \subset \im(N)\otimes \calL \subset \ker(N) \subset \calH.
    $$
    Let $\calI$ be the saturation of $\im(N)$ in $\calH \otimes \calL^{\otimes (-1)}$. The semistability of $(\calH,\theta)$ implies that 
    $$ \mu(\calH) \leq \mu(\im(N)) \leq \mu(\calI) \leq \mu(\calH\otimes \calL^{\otimes (-1)} )= \mu(\calH).$$
    Thus $\im(N)=\calI$, i.e., $\im(N)$ is saturated. Hence $\ker(N)/(\im(N) \otimes \calL)$ is a vector bundle. 
    
    The filtration above induces a filtration $F$ on $\EEnd(\calH,\theta)^{\tr=0}$ whose associated graded object is 
    $$
    \begin{aligned}
        \gr_{F}(\EEnd(\calH,\theta)^{\tr=0}) \cong & \EEnd(\ker(N)/(\im(N) \otimes \calL)) \oplus \HHom(\calH/\ker(N),\ker(N)/(\im(N) \otimes \calL)) \\
        &\oplus \HHom(\ker(N)/(\im(N) \otimes \calL),\im(N)\otimes \calL) \oplus \calL .
    \end{aligned}
    $$
    By considering the Jordan--H\"older filtration, we deduce that 
    $$(\calH/\ker(N))^{\oplus(r-2)}\cong \gr_{\mathrm{JH}}(\ker(N)/(\im(N) \otimes \calL)) \cong (\im(N)\otimes \calL)^{\oplus(r-2)}.$$
    Since $r-2>0$ and both $\calH/\ker(N)$ and
$\im(N)\otimes\calL$ are line bundles of the same degree,
the uniqueness of Jordan--H\"older factors gives
$\calH/\ker(N)\cong\im(N)\otimes\calL$.
    But $\calH/\ker(N) \cong \im(N)$, so $\calL \cong \calO_{X}$, contradicting the fact that $\calL$ is nontrivial.
\end{proof}

\section{Lift-Independent Higgs Bundles with Nonzero Higgs Fields}\label{section: non zero independ}

\subsection{Lift-independent Higgs bundles with nilpotent but nonzero Higgs fields}
The main goal of this subsection is to prove the following result. 

\begin{theorem}\label{theo: nonzero-nilp-deg0-ss-quasi-indep-Higgs}
    Let $K$ be an algebraically closed field of characteristic $0$, and let $X$ be a smooth projective curve of genus $g \geqslant 2$ over $K$. 
    Then, there exists a semistable, non-polystable, nilpotent Higgs bundle $(\mathcal{H}, \theta)$ of rank $g+1$ and degree $0$ with nonzero Higgs field on $X$ that is cohomologically lift-independent.
\end{theorem}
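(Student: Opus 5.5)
The plan is to build an explicit example and to verify cohomological lift-independence through the Serre-dual criterion already used in Lemma \ref{lemma: Basic lemma} and Example \ref{eg: without semi stable}. The map $\rH^1(\theta)\colon \rH^1(X,T_X)\to\rH^1(X,\EEnd(\calH,\theta))$ vanishes if and only if the dual map
\[
\rH^0\bigl(X,\EEnd(\calH,\theta)^\vee\otimes\Omega_X\bigr)\xrightarrow{\ \rH^0(\theta^\vee)\ }\rH^0(X,\Omega_X^{\otimes2})
\]
vanishes. Equivalently, one can exhibit, for every \v{C}ech cocycle $a$ of $T_X$, local sections $\psi_i$ of $\EEnd(\calH,\theta)$ with $\theta(a)=\psi_j-\psi_i$, as in Example \ref{eg: without semi stable}.

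The natural ingredients are the evaluation sequence
\[
0\to M\to \rH^0(X,\Omega_X)\otimes\calO_X\to\Omega_X\to0
\]
and its dual
\[
0\to T_X\to \rH^0(X,\Omega_X)^\vee\otimes\calO_X\to M^\vee\to0.
\]
These produce rank-$g$ bundles of the right degrees, and together with one extra copy of $\calO_X$ give rank $g+1$ and degree $0$.

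\textbf{A naive choice fails.} Take $\calH=\calO_X^{g}\oplus\calO_X f$ with $\theta(e_i)=\omega_i f$ for a basis $(\omega_i)$ of $\rH^0(X,\Omega_X)$. This is nilpotent with nonzero field, of degree $0$, and semistable, since its underlying bundle is trivial. However, a direct block computation shows that $\theta(T_X)$ lands in the summand $\{(0,c,0)\}\cong\calO_X^{g}$ of $\EEnd(\calH,\theta)$, and this summand splits off. The induced map on $\rH^1$ is then dual to the multiplication map
\[
\rH^0(X,\Omega_X)^{\oplus g}\to\rH^0(X,\Omega_X^{\otimes2}),
\]
which is nonzero. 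So the extra copy of $\calO_X$ and the trivial summand have to be glued non-trivially.

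\textbf{Intended construction.} Replace the trivial part by a nonsplit bundle $V$ of rank $g$, for instance
\[
V=\rH^0(X,\Omega_X)^\vee\otimes\calO_X\quad\text{reorganized via the extension class above,}
\]
or an extension of $\calO_X$-type pieces, in such a way that the image of $T_X$ in $\EEnd(\calH,\theta)$ is contained in a non-split sub-bundle $B$. The requirement is that $T_X\to B$ be the first map of the universal-type sequence $0\to T_X\to B\to\cdots$, so that the connecting map kills $\rH^1(X,T_X)$. I expect to realize this using the dual evaluation sequence: the Higgs field should send $\calO_X f$ into $V\otimes\Omega_X$ through the coevaluation, and then $\theta(T_X)$ is the sub-bundle $T_X\subset \rH^0(X,\Omega_X)^\vee\otimes\calO_X$.

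The remaining verifications are as follows.
\begin{enumerate}
\item Nilpotence and $\theta\neq0$ are immediate from the block shape.
\item Degree $0$ is a bookkeeping check.
\item Semistability follows by listing $\theta$-invariant subsheaves and checking that each has nonpositive degree.
\item Non-polystability follows because the extension $0\to\calO_X\to\calH\to V\to0$ is a nonsplit extension of degree-$0$ semistable Higgs bundles.
\end{enumerate}

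\textbf{Main obstacle.} The hard step is the vanishing of $\rH^1(\theta)$. It requires computing $\EEnd(\calH,\theta)$ exactly and showing that the composite $\rH^1(X,T_X)\to\rH^1(X,\EEnd(\calH,\theta))$ is zero, rather than merely that some graded piece vanishes. Lemma \ref{lem:centralizer-degree-bound} forces every subsheaf of $\EEnd(\calH,\theta)$ to have degree at most $0$, so this vanishing cannot come from positivity, unlike Example \ref{eg: without semi stable}. It must come from a connecting homomorphism.

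I would first try to arrange that $\EEnd(\calH,\theta)$ contains $\rH^0(X,\Omega_X)^\vee\otimes\calO_X$ modulo a piece whose $\rH^1$ receives $\rH^1(X,T_X)$ trivially. I would then check the dual statement that every $\Phi\in\rH^0(X,\EEnd(\calH,\theta)^\vee\otimes\Omega_X)$ pairs to zero against $\theta$. I am not certain this particular gluing works. If it does not, the fallback is to search among rank-$(g+1)$ extensions whose extension class is the Kodaira--Spencer-type element in $\rH^1(X,T_X)\otimes\rH^1(X,T_X)^\vee$, so that $\rH^1(\theta)$ is killed by construction.
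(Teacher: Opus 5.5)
Your overall strategy is sound: make $\theta\colon T_X\to\EEnd(\calH,\theta)$ factor through a subbundle whose $\rH^1$ receives $\rH^1(X,T_X)$ trivially, via a connecting homomorphism. You are also right that positivity cannot achieve this here. But the proposal never produces an actual $(\calH,\theta)$, and the step that carries all the content, $\rH^1(\theta)=0$, is left open.

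The concrete realization you suggest fails for the same reason as your naive example. Take $\calH=\calO_Xf\oplus V$ with $V=\rH^0(X,\Omega_X)^\vee\otimes\calO_X$ and $\theta(f)$ given by coevaluation. The same block computation shows that $\HHom(\calO_Xf,V)\cong V$ is a direct summand of $\EEnd(\calH,\theta)$ containing $\theta(T_X)$. The map $\rH^1(X,T_X)\to\rH^0(X,\Omega_X)^\vee\otimes\rH^1(X,\calO_X)$ induced by $T_X\hookrightarrow\rH^0(X,\Omega_X)^\vee\otimes\calO_X$ is Serre dual to the multiplication $\rH^0(X,\Omega_X)\otimes\rH^0(X,\Omega_X)\to\rH^0(X,\Omega_X^{\otimes2})$, which is nonzero. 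So the dual evaluation sequence is not of ``universal type''.

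The fallback does not fit the numerics either. The universal extension of $\rH^1(X,T_X)\otimes\calO_X$ by $T_X$ has rank $3g-2$ and degree $2-2g$. Using it as the block that receives $\theta$ therefore cannot yield a degree-$0$ bundle of rank $g+1$. Killing the $(3g-3)$-dimensional space $\rH^1(X,T_X)$ through an extension of $T_X$ itself is too expensive.

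The missing idea is to pass first through a nontrivial line bundle $\calL$ of degree $0$. By Riemann--Roch, $h^0(X,\calL\otimes\Omega_X)=h^1(X,\calL)=g-1>0$. So a nonzero $\omega\in\rH^0(X,\calL\otimes\Omega_X)$ gives $T_X\hookrightarrow\calL$, and only the $(g-1)$-dimensional space $\rH^1(X,\calL)$ remains to be killed.

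Take the extension $0\to\calL\xrightarrow{i}\calV\to\calO_X^{\oplus(g-1)}\to0$ whose connecting map $\rH^0(X,\calO_X^{\oplus(g-1)})\to\rH^1(X,\calL)$ is an isomorphism. Then $\rH^1(i)=0$, and $\calV$ is semistable of rank $g$ and degree $0$. Put $\calH=\calV\oplus\calO_X$, and let $\theta$ be the composite $\calH\to\calO_X\xrightarrow{\omega}\calL\otimes\Omega_X\xrightarrow{i\otimes1}\calV\otimes\Omega_X\subset\calH\otimes\Omega_X$.

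This gives the required properties:
\begin{enumerate}
\item $\theta\neq0$ and $\theta^2=0$, so the bundle is nilpotent.
\item The rank is $g+1$ and the degree is $0$.
\item Semistability is automatic, because the underlying bundle is semistable of degree $0$.
\item Non-polystability holds because $(\calV,0)$ is a non-split sub-Higgs bundle of the same slope.
\item Cohomological lift-independence holds: $\calV=\HHom(\calO_X,\calV)\subset\EEnd(\calH,\theta)$, and $\theta\colon T_X\to\EEnd(\calH,\theta)$ factors as $T_X\xrightarrow{\omega}\calL\xrightarrow{i}\calV$. Hence $\rH^1(\theta)$ factors through $\rH^1(i)=0$.
\end{enumerate}
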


Before introducing the construction, we need some lemmas.

\begin{lemma}\label{lem:ext-deg-zero-semistable}
    Let $X$ be a projective smooth curve of genus $g \geqslant 2$. 
    Let $\calL$ be a non-trivial line bundle of degree $0$ on $X$. 
    There exists a short exact sequence
    \begin{equation*}
        0 \xrightarrow[]{} \calL \xrightarrow[]{i} \calV \xrightarrow[]{} \calO_{X}^{\oplus (g-1)} \xrightarrow[]{} 0,
    \end{equation*}
    where $\calV$ is a semistable vector bundle of degree $0$, such that the connecting homomorphism
    $$
    \rH^0(X, \calO_{X}^{\oplus (g-1)}) \xrightarrow[]{\delta} \rH^1(X,\calL)
    $$ 
    is an isomorphism. 
    Consequently, the induced map $\rH^1(X, \calL) \xrightarrow[]{\rH^1(i)} \rH^1(X, \calV)$ is zero.
\end{lemma}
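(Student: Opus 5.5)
Since $\calL$ is nontrivial of degree $0$, we have $h^0(X,\calL)=0$, and Riemann--Roch gives $h^1(X,\calL)=g-1$. I would build the extension directly from this. Extensions of $\calO_X^{\oplus(g-1)}$ by $\calL$ are classified by
\[
\Ext^1(\calO_X^{\oplus(g-1)},\calL)\cong \rH^1(X,\calL)^{\oplus(g-1)}\cong \Hom_K\bigl(K^{g-1},\rH^1(X,\calL)\bigr),
\]
and under this identification the class of an extension is exactly its connecting map $\delta:\rH^0(X,\calO_X^{\oplus(g-1)})=K^{g-1}\to\rH^1(X,\calL)$. That this is the identification is the standard fact that the connecting map sends $e_k$ to the $k$-th component of the extension class. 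So I would choose a basis $\xi_1,\dots,\xi_{g-1}$ of $\rH^1(X,\calL)$ and take $\calV$ to be the extension with class $(\xi_1,\dots,\xi_{g-1})$. Then $\delta$ is an isomorphism by construction.

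The vanishing of $\rH^1(i)$ follows from the long exact sequence
\[
\rH^0(\calO_X^{\oplus(g-1)})\xrightarrow{\delta}\rH^1(\calL)\xrightarrow{\rH^1(i)}\rH^1(\calV).
\]
Since $\delta$ is surjective, exactness forces $\rH^1(i)=0$.

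Next, $\deg\calV=\deg\calL+0=0$. For semistability, take any subsheaf $\calW\subset\calV$; we may assume it is saturated. Put $\calW_1=\calW\cap\calL$ and let $\calW_2$ be the image of $\calW$ in $\calO_X^{\oplus(g-1)}$. Then $\deg\calW_1\le 0$, because $\calW_1$ is either $0$ or a subsheaf of the degree-$0$ line bundle $\calL$. Also $\deg\calW_2\le 0$, because $\calO_X^{\oplus(g-1)}$ is semistable of slope $0$. Hence $\deg\calW=\deg\calW_1+\deg\calW_2\le 0$, and $\calV$ is semistable of degree $0$. This step is easy: an extension of semistable slope-$0$ bundles is semistable of slope $0$, and it needs neither the nontriviality of the class nor the triviality of $\calL$.

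The main point needing care is the identification of $\delta$ with the extension class, including its sign conventions. Signs are harmless here, since only bijectivity of $\delta$ matters. If one wants to avoid this identification, an alternative is to build $\calV$ as the pullback of the universal extension
\[
0\to\calL\to\calV_{\mathrm{univ}}\to\calO_X\otimes\rH^1(X,\calL)\to0
\]
associated with $\id\in\End(\rH^1(X,\calL))=\Ext^1(\calO_X\otimes\rH^1(X,\calL),\calL)$, after choosing a basis, and then check directly on \v{C}ech cocycles that the connecting map is the identity.
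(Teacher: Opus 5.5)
Your proposal is correct and follows the paper's proof essentially step for step. You use Riemann--Roch to get $h^1(X,\calL)=g-1$, identify $\Ext^1(\calO_X^{\oplus(g-1)},\calL)\cong\Hom\bigl(\rH^0(X,\calO_X^{\oplus(g-1)}),\rH^1(X,\calL)\bigr)$ via the connecting map, use exactness to get $\rH^1(i)=0$, and use that extensions of semistable slope-$0$ bundles stay semistable; you write out that last step more carefully than the paper, which only asserts it.
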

\begin{proof}
    Since $\calL,\calL^{-1}$ are non-trivial line bundles of degree $0$, we have $h^0(X,\calL)=h^0(X,\calL^{-1})=0$. 
    Applying the Riemann--Roch theorem yields 
    $$h^1(X,\calL)=h^0(X,\calL\otimes \Omega_{X})=g-1.$$
    Since $\Ext^1(\calO_{X}^{\oplus (g-1)}, \calL) \cong  \Hom(\rH^0(X, \calO_{X}^{\oplus (g-1)}), \rH^1(X, \calL))$, we may choose an extension class that induces an isomorphism $\rH^0(X, \mathcal{O}_{X}^{\oplus (g-1)}) \xrightarrow[]{\sim} \rH^1(X, \calL)$. 
    Let  \begin{equation}\label{eq:ext-deg-zero-semistable}
        0 \xrightarrow[]{} \calL \xrightarrow[]{i} \calV \xrightarrow[]{} \calO_{X}^{\oplus (g-1)} \xrightarrow[]{} 0
    \end{equation} be the corresponding extension. 

    From the long exact sequence associated with the short exact sequence \eqref{eq:ext-deg-zero-semistable}, 
    the induced map $\rH^1(i):\rH^1(X, \calL) \to \rH^1(X, \calV)$ is zero. 
    
    Finally, since both $\calL$ and $\calO_{X}^{\oplus (g-1)}$ are semistable vector bundles of degree $0$, so is their extension $\calV$.
\end{proof}

\begin{lemma}\label{lem:nonzero-nilp-deg0-ss-quasi-indep-Higgs}
    Let $\calL,\calV$ and $\calL \xrightarrow[]{i} \calV$ be defined as in Lemma \ref{lem:ext-deg-zero-semistable}. 
    Choose a non-zero section $\omega \in \rH^0(X,\calL \otimes \Omega_{X})$. 
    Let $\calH = \calV \oplus \calO_X$. 
    Define a Higgs field $\theta: \calH \to \calH \otimes \Omega_{X}$ by
    $$ 
    \calH \xrightarrow[]{\operatorname{pr}} \mathcal{O}_X \xrightarrow[]{\omega} \calL \otimes \Omega_X \xrightarrow[]{i \otimes 1} \mathcal{V} \otimes \Omega_X \xrightarrow[]{\operatorname{incl}} \calH \otimes \Omega_X. 
    $$
    Then $(\mathcal{H}, \theta)$ is a semistable, non-polystable, nilpotent Higgs bundle of degree $0$ with $\theta \neq 0$.
\end{lemma}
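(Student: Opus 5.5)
We verify the claims in the order: degree and Higgs-field properties, then semistability, then non-polystability.

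\emph{Degree, nilpotence, $\theta\neq0$.} By Lemma \ref{lem:ext-deg-zero-semistable}, $\deg\calV=\deg\calL+0=0$, so $\deg\calH=\deg\calV+\deg\calO_X=0$. The image of $\theta$ lies in $\calV\otimes\Omega_X$. The composite of $\theta$ with the projection to $\calO_X\otimes\Omega_X$ is zero, and $\theta$ kills $\calV$. Hence $\theta\circ\theta=0$; in particular $\theta\wedge\theta=0$ holds on a curve anyway, and $\theta$ is nilpotent. Next, $\omega\neq0$ and $i$ is injective, so $(i\otimes1)\circ\omega\neq0$, and therefore $\theta\neq0$.

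\emph{Semistability.} Let $\calG\subset\calH$ be a $\theta$-invariant subsheaf, which we may take saturated. We need $\deg\calG\le0$. Consider the exact sequence $0\to\calG\cap\calV\to\calG\to\calG'\to0$, where $\calG'\subset\calO_X$ is the image under the projection. Since $\calV$ is semistable of degree $0$, we get $\deg(\calG\cap\calV)\le0$. Also $\calG'$ is either $0$ or a subsheaf of $\calO_X$, so $\deg\calG'\le0$. Adding gives $\deg\calG\le0$. Note that this step does not even need $\theta$-invariance, because $\calH=\calV\oplus\calO_X$ is already semistable as a bundle. So semistability of the Higgs bundle is immediate.

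\emph{Non-polystability (the main obstacle).} Suppose $(\calH,\theta)$ is polystable, i.e.\ a direct sum of stable degree-$0$ Higgs bundles. I would derive a contradiction along the following lines.

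First, $\calO_X\otimes\Omega_X^{-1}$-type reasoning shows $\ker\theta=\calV$, since $\theta$ restricted to $\calO_X$ is injective.

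Second, in a polystable Higgs bundle every $\theta$-invariant degree-$0$ saturated subsheaf is a direct summand as a Higgs bundle. The cleanest invariant subsheaf to use is $\calL\subset\calV\subset\calH$: $\theta$ kills it, it is saturated with degree $0$, and so it is a Higgs subsheaf of slope $0$. Polystability then forces a Higgs splitting $\calH=\calL\oplus\calH''$ with $\theta$ preserving $\calH''$.

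Third, compose the inclusion $\calO_X\to\calH$ with the projection onto $\calL$, which is a bundle map. This gives a section in $\Hom(\calO_X,\calL)=0$, because $\calL$ is nontrivial of degree $0$. Hence $\calO_X\subset\calH''$, modulo the identification coming from the splitting; more precisely, the image of the $\calO_X$ summand lies in $\calH''$. Since $\calH''$ is $\theta$-stable, $\theta(\calO_X)\subset\calH''\otimes\Omega_X$. But $\theta(\calO_X)=\omega\cdot\calL\otimes\Omega_X$ lies in $\calL\otimes\Omega_X$, and $(\calL\otimes\Omega_X)\cap(\calH''\otimes\Omega_X)=0$. This contradicts $\omega\neq0$.

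The delicate point in this step is justifying that $\calL$ splits off as a Higgs summand. For that I would use the standard fact that in a polystable object, any degree-$0$ Higgs subsheaf is a direct summand, coming from semisimplicity of the abelian category of polystable degree-$0$ Higgs bundles.

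As a backup argument, one can instead show that $\calV$ is not polystable as a bundle: the extension class of $\calV$ is nonzero, and any splitting would make $\delta=0$. The Higgs version above is cleaner.
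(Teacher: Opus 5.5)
Your proof is correct. The degree, nilpotence and semistability parts agree with the paper's; your subsheaf computation just verifies directly that a direct sum of semistable degree-$0$ bundles is semistable.

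For non-polystability you take a slightly different route from the paper, which splits off the larger Higgs subbundle $(\calV,0)$ rather than $(\calL,0)$. If $\calH=\calV\oplus\calH''$ as Higgs bundles, then $\theta(\calH'')\subset(\calV\cap\calH'')\otimes\Omega_X=0$, because $\theta$ takes values in $\calV\otimes\Omega_X$. Together with $\theta|_{\calV}=0$ this forces $\theta=0$, a contradiction.

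The paper's version uses only that $\theta$ kills $\calV$ and lands in $\calV\otimes\Omega_X$. Your version through $\calL$ additionally needs $\Hom(\calO_X,\calL)=\rH^0(X,\calL)=0$, that is, the nontriviality of $\calL$, to push the $\calO_X$ summand into the complement $\calH''$. That hypothesis is available from Lemma \ref{lem:ext-deg-zero-semistable}, so your argument goes through.

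Both arguments rest on the same fact: a saturated degree-$0$ Higgs subsheaf of a polystable degree-$0$ Higgs bundle is a Higgs direct summand. The paper uses this implicitly, and you state it explicitly.

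Your backup argument is roundabout. Once $(\calV,0)$ is known to split off, the one-line argument above already finishes. Deriving a contradiction from the non-polystability of $\calV$ as a bundle would additionally require that Higgs direct summands of polystable Higgs bundles are polystable.
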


\begin{proof}
    In the proof of Lemma \ref{lem:ext-deg-zero-semistable}, we have shown that 
    $$
    h^0(X,\calL \otimes \Omega_{X}) = g-1 >0, 
    $$
    so it is possible to choose a non-zero section $\omega \in \rH^0(X,\calL \otimes \Omega_{X})$. 
    By construction, it is evident that $\theta \neq 0$ and $\theta^2 = 0$. 
    
    As both $\calO_{X}$ and $\calV$ are semistable vector bundles of degree $0$, so is their direct sum $\calH$. 
    In particular, $(\calH,\theta)$ is a semistable Higgs bundle. 

    Finally, since we have an exact sequence 
    $$
    0 \xrightarrow[]{} (\calV, 0) \xrightarrow[]{} (\calH,\theta) \xrightarrow[]{} (\calO_{X}, 0) \xrightarrow[]{} 0 
    $$
    of Higgs bundles with $\theta \neq 0$, the sub-Higgs-bundle $(\calV, 0)$ of $(\calH,\theta)$ is not a direct summand. 
    We conclude that the Higgs bundle $(\calH,\theta)$ is not polystable.
\end{proof}

\begin{proof}[Proof of Theorem \ref{theo: nonzero-nilp-deg0-ss-quasi-indep-Higgs}]
    We claim that the Higgs bundle $(\mathcal{H},\theta)$ constructed in Lemma \ref{lem:nonzero-nilp-deg0-ss-quasi-indep-Higgs} is cohomologically lift-independent.

    Consider the direct summand $\calV \cong \HHom(\calO_X, \calV) \subset \EEnd(\calH)$. 
    Note that $\calV \subset \EEnd(\calH, \theta)$ since $\theta|_{\calV} = 0$ and $\im(\theta) \subset \calV \otimes \Omega_X$. 
    By the construction of $\theta$, there is a factorization
    \[ \theta: T_X \xrightarrow[]{\omega} \calL \xrightarrow[]{i} \calV \hookrightarrow \EEnd(\mathcal{H}, \theta), \]
    which induces the factorization of maps on cohomology:
    \[ \rH^1(\theta) : \rH^1(X, T_X) \xrightarrow[]{\rH^1(\omega)} \rH^1(X, \calL) \xrightarrow{\rH^1(i)} \rH^1(X, \calV) \to \rH^1(X, \EEnd(\mathcal{H}, \theta)). \]
    Since $\rH^1(i) = 0$ by Lemma \ref{lem:ext-deg-zero-semistable}, we conclude that $\rH^1(\theta) = 0$, i.e., $(\mathcal{H}, \theta)$ is cohomologically lift-independent.
\end{proof}

Indeed, the above example is also lift-independent. 

\begin{theorem}\label{theo: nonzero-nilp-deg0-ss-rat-indep-Higgs}
    Assume that $K=\bfC$. Then the Higgs bundle $(\calH, \theta)$ constructed in Lemma \ref{lem:nonzero-nilp-deg0-ss-quasi-indep-Higgs} is lift-independent.
\end{theorem}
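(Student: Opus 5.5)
We argue as in Example \ref{eg: without semi stable}, using Remark \ref{rmk:rat-transition}. Fix two flat $B_2$-liftings $\widetilde X_1,\widetilde X_2$ and an affinoid cover $\frakU=\{U_1,U_2\}$ of $X$ (two affinoids suffice for a curve). Represent the difference class by a \v{C}ech cocycle $a\in \check Z^1(\frakU,T_X(1))$. Since $\theta^2=0$, Remark \ref{rmk:rat-transition} says that $S^{-1}_{\widetilde X_2}S_{\widetilde X_1}(\calH,\theta)$ is obtained from $(\calH,\theta)$ by twisting the transition map on $U_{12}$ by $\exp(\theta(a))=\id+\theta(a)$. It therefore suffices to find $\psi_i\in\Gamma(U_i,\EEnd(\calH,\theta))$ with $\exp(\psi_i)$ defined, satisfying
\[
\exp(\psi_2)\circ\exp(\theta(a))^{-1}\ \text{or equivalently}\ \exp(\theta(a))=\exp(\psi_2)\exp(-\psi_1)\ \text{on } U_{12}.
\]
Then $\varphi_i=\exp(\psi_i)$ are Higgs automorphisms on $U_i$, and they glue to the desired isomorphism.

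To build the $\psi_i$, use the factorization $\theta\colon T_X\xrightarrow{\omega}\calL\xrightarrow{i}\calV\hookrightarrow\EEnd(\calH,\theta)$ from the proof of Theorem \ref{theo: nonzero-nilp-deg0-ss-quasi-indep-Higgs}. Here $\calV=\HHom(\calO_X,\calV)$ is the block of $\EEnd(\calH)$ sending the $\calO_X$ summand into $\calV$. Since $\rH^1(i)=0$ by Lemma \ref{lem:ext-deg-zero-semistable}, the cocycle $i(\omega(a))$ is a coboundary already in $\check C^\bullet(\frakU,\calV)$, not just in $\EEnd(\calH,\theta)$. So there are $\psi_i\in\Gamma(U_i,\calV)$ with $\theta(a)=\psi_2-\psi_1$ on $U_{12}$.

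Every section of the subsheaf $\calV\subset\EEnd(\calH)$ is nilpotent of square zero, and any two such sections compose to zero. This holds because they map $\calO_X\to\calV$ and kill $\calV$. Hence $\exp(\psi_i)=\id+\psi_i$, and
\[
\exp(\psi_2)\exp(-\psi_1)=(\id+\psi_2)(\id-\psi_1)=\id+\psi_2-\psi_1=\id+\theta(a)=\exp(\theta(a)).
\]
Each $\psi_i$ commutes with $\theta$, since $\calV\subset\EEnd(\calH,\theta)$ as noted in the proof of Theorem \ref{theo: nonzero-nilp-deg0-ss-quasi-indep-Higgs}. So the $\varphi_i$ are Higgs automorphisms compatible with the twisted and untwisted gluing. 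This gives $(\calH,\theta)\cong S^{-1}_{\widetilde X_2}S_{\widetilde X_1}(\calH,\theta)$, that is, lift-independence.

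The main point needing care is applying Remark \ref{rmk:rat-transition}. The cocycle must be taken on the fixed cover $\frakU$, with the same orientation and sign conventions as in the descent datum, and the Tate twist must be handled via the fixed identification $\bfC(1)\cong\bfC$. These are bookkeeping issues; the cohomological input is exactly $\rH^1(i)=0$, and the square-zero structure of $\calV$ removes any convergence or BCH issues.
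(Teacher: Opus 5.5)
Your proposal is correct and is essentially the paper's proof: you use $\rH^1(i\circ\omega)=0$ to choose the primitives $\psi_j$ of $\theta(a)$ inside the square-zero block $\calV\subset\EEnd(\calH,\theta)$, set $\varphi_j=\exp(\psi_j)=\id+\psi_j$, and conclude via Remark~\ref{rmk:rat-transition}. You spell out more explicitly than the paper why the $\psi_j$ can be taken in $\calV$ and why $(\id+\psi_2)(\id-\psi_1)=\id+\theta(a)$; the only flaw is the garbled first expression in your displayed condition, which should simply be deleted.
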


\begin{proof}
    Fix an affinoid cover $\frakU = \{U_1, U_2\}$ of $X$. 
    For any \v{C}ech cocycle $a \in \check{Z}^1(\frakU, T_X)$, we know that $\theta(a) \in \check{B}^1(\frakU, \EEnd(\calH, \theta))$. 
    That is, there exist local sections $\psi_j = \psi_j(a) \in \Gamma(U_j, \EEnd(\calH, \theta))$ for $j=1, 2$, such that 
    $$ \theta(a) = \psi_2|_{U_{12}} - \psi_1|_{U_{12}}, $$
    where $U_{12}=U_1\cap U_2$.
    By the proof of Theorem \ref{theo: nonzero-nilp-deg0-ss-quasi-indep-Higgs}, the induced map on cohomology $\rH^1(i\circ\omega): \rH^1(X, T_X) \to \mathrm{H}^1(X, \calV)$ is zero. 
    Thus, we may refine our choice of $\psi_j$ such that $\psi_j^2=0$ for all $j$.

    Let $\varphi_j = \exp(\psi_j) = \operatorname{id} + \psi_j$. 
    Since $\psi_1$ and $\psi_2$ take values in $\mathcal{V}$ and square to zero, they commute with each other. We then obtain the following commutative diagram on the overlap $U_{12}$:
    \[
    \begin{tikzcd}
        \mathcal{H}|_{U_{12}} \arrow[r, "\operatorname{id}"] \arrow[d, "\varphi_1"'] & \mathcal{H}|_{U_{12}} \arrow[d, "\varphi_2"] \\
        \mathcal{H}|_{U_{12}} \arrow[r, "\exp(\theta(a))"'] & \mathcal{H}|_{U_{12}}
    \end{tikzcd}
    \]
    which implies that the Higgs bundle obtained by twisting the descent datum by $\exp(\theta(a))$ is isomorphic to $(\calH,\theta)$. 
    Therefore, $(\calH, \theta)$ is lift-independent by Remark \ref{rmk:rat-transition}.
\end{proof}

\subsection{Non-nilpotent integrally lift-independent Higgs bundles} 
We will prove the following result.

\begin{theorem}\label{theo: non-nilpotent cohomologically lift-independent semistable}
    There exists a projective smooth connected curve $X/\mathbb{C}$ of genus $2$, 
    and a non-nilpotent, cohomologically lift-independent semistable Higgs bundle $(\mathcal{H},\theta)$ of degree $0$ on $X$. 
\end{theorem}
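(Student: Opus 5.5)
The plan is a spectral construction with eigenvalues of the form $\zeta^k\eta$. The earlier results constrain the shape of any example. Theorem~\ref{theo:rank_le_genus_nilpotent} with $g=2$ forces rank $r\geqslant 3$, since $\theta^r=0$ for $r\le 2$. Theorem~\ref{theo: r<(g-1)^{1/2} quasi-independ to 0} excludes semistable examples of rank $r\leqslant\sqrt2+1$, which again gives $r\geqslant 3$. Proposition~\ref{prop:vanish_of_power_trace_of_CLI} forces $\tr\theta=\tr\theta^2=0$. So I would look for a rank $3$ example whose characteristic polynomial is $T^3-c$ with $0\neq c\in\rH^0(X,\Omega_X^{\otimes3})$.

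\textbf{Construction.} Take a cyclic cover $\pi:Y\to X$ of degree $3$ with deck generator $\sigma$, possibly ramified along a divisor $R$. Choose $\eta\in\rH^0(Y,\pi^*\Omega_X)$ with $\sigma^*\eta=\zeta\eta$ for a primitive cube root of unity $\zeta$. Note $\pi^*\Omega_X\cong\Omega_Y(-R)$, so $\eta$ is a $1$-form on $Y$ vanishing along $R$. For a line bundle $M$ on $Y$, put
\[
\calH=\pi_*M,\qquad \theta=\pi_*(\eta\cdot-):\pi_*M\to\pi_*(M\otimes\pi^*\Omega_X)=\calH\otimes\Omega_X .
\]
Locally the eigenvalues of $\theta$ are $\eta,\zeta\eta,\zeta^2\eta$. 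Hence $\tr\theta=\tr\theta^2=0$, while $\tr\theta^3=3\eta^3$, which descends to a nonzero cubic differential on $X$. So $\theta$ is not nilpotent.

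Choose $\eta$ so that it generates the function field extension, meaning the spectral curve is $Y$ itself, birationally and in fact isomorphically. Then $(\calH,\theta)$ has integral spectral curve, so it has no $\theta$-invariant subsheaves of intermediate rank and is therefore stable. Adjust $\deg M$ so that $\deg\calH=0$. In this regular case I would also check that $\EEnd(\calH,\theta)=\pi_*\calO_Y$.

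\textbf{The cohomological condition.} The map $\theta:T_X\to\EEnd(\calH,\theta)=\pi_*\calO_Y$ is multiplication by $\eta$, viewed as a map $\pi^*T_X\to\calO_Y$. Hence $\rH^1(\theta)$ is the map
\[
\rH^1(X,T_X)\to\rH^1(Y,\pi^*T_X)\xrightarrow{\ \eta\ }\rH^1(Y,\calO_Y).
\]
By Serre duality on $Y$ and on $X$, its vanishing is equivalent to the following condition:
\[
\mathrm{Tr}_{Y/X}(\eta\,\omega)=0\in\rH^0(X,\Omega_X^{\otimes 2})\qquad\text{for all }\omega\in\rH^0(Y,\Omega_Y).
\]
Split $\rH^0(Y,\Omega_Y)$ into $\sigma$-eigenspaces $\rH^0(\Omega_Y)_{\zeta^k}$. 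The trace kills every product $\eta\omega$ that is not $\sigma$-invariant. So the condition reduces to the following requirement:
\[
\rH^0(Y,\Omega_Y)_{\zeta^{-1}}\cdot\eta\ \text{has zero trace, e.g.\ } \rH^0(Y,\Omega_Y)_{\zeta^{2}}=0,\qquad\text{while}\qquad \rH^0(Y,\Omega_Y(-R))_{\zeta}\neq0 .
\]

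\textbf{Choosing the cover (main obstacle).} Étale covers do not work. For a nontrivial $3$-torsion line bundle $L$, each nontrivial eigenspace $\rH^0(\Omega_X\otimes L^{\pm1})$ has dimension $g-1=1\neq0$, so the bad eigenspace is nonzero. I would therefore use a ramified cyclic triple cover of a genus $2$ curve that itself carries an order-$3$ symmetry, for instance
\[
X:\ y^2=x^6+ax^3+1,\qquad Y:\ w^3=f(x,y),
\]
with $f$ chosen so that the branch data make the relevant eigenspaces unbalanced. Chevalley--Weil/Hurwitz formulas give the eigenspace dimensions $h^0(\Omega_X\otimes L_k^{-1}(\lceil\cdot\rceil))$ in terms of the branch data and the characters $L_k$.

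The plan is then to search over branch data, meaning the number of branch points and their local monodromies $1$ or $2$, for a configuration with
\[
\dim\rH^0(\Omega_Y)_{\zeta^{2}}=0,\qquad \dim\rH^0(\Omega_Y(-R))_{\zeta}>0 .
\]
Equivalently, the target is $h^0(\Omega_X\otimes N^{-1})>0$ together with $h^0(\Omega_X\otimes N'^{-1}(B'))=0$ for the appropriate eigen-line bundles $N,N'$ on $X$ of prescribed degrees. The second vanishing needs a degree-$1$ (or degree-$2$ non-canonical) class with no sections, which is where the specific choice of curve and branch points enters.

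Finally, I would verify that $\eta$ separates the sheets generically, so that $\EEnd(\calH,\theta)=\pi_*\calO_Y$ holds. I expect this numerical/eigenspace step to be the main difficulty. If no pure cyclic configuration works, the fallback is to let $Y$ be a general smooth spectral curve in the total space of $\Omega_X$ given by $\lambda^3=c$, and impose the trace condition by choosing $c$ with extra symmetry.
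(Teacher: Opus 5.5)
\textbf{The proposal does not prove the statement.} The decisive step, producing a cover with $\rH^0(Y,\Omega_Y)_{\zeta^2}=0$ and $\eta\neq0$, is left as a search, and that search cannot succeed.

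Your reduction of cohomological lift-independence to the vanishing of $\alpha\mapsto\tr_{Y/X}(\eta\alpha)$ on $\rH^0(Y,\Omega_Y)$ is correct; it is the paper's Lemma~\ref{lem:non-nilp-quasi-indep-Higgs}. The problem is the choice of a cyclic cover, ramified or not. Write $\pi_*\calO_Y=\calO_X\oplus\calL_1\oplus\calL_2$, where $\calL_k$ is the $\zeta^k$-eigensheaf.
\begin{itemize}
\item Connectedness of $Y$ gives $\rH^0(X,\calL_1)=0$.
\item Multiplication gives an injection $\calL_1^{\otimes3}\hookrightarrow\calO_X$, so $\deg\calL_1\le0$.
\item The trace pairing matches $\calL_1$ with the $\zeta^2$-part of $\pi_*\Omega_Y$, so $(\pi_*\Omega_Y)_{\zeta^2}\cong\calL_1^{\vee}\otimes\Omega_X$.
\end{itemize}
Hence
\[
\dim\rH^0(Y,\Omega_Y)_{\zeta^2}=h^1(X,\calL_1)=g-1-\deg\calL_1\geq g-1=1 .
\]
For any nonzero $\omega$ in this eigenspace, $\eta\omega$ is a nonzero $\sigma$-invariant form, so $\tr_{Y/X}(\eta\omega)\neq0$.

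Your numerical target is likewise unreachable. The bundle $\calL_1^{\vee}\otimes\Omega_X$ has degree at least $2$. On a genus-$2$ curve every degree-$2$ line bundle $D$ has $h^0(D)\geq\deg D+1-g=1$, so a ``degree-$2$ non-canonical class with no sections'' does not exist.

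The obstruction comes from commutativity, not from \'etaleness. Decompose any nonzero $\eta\in\rH^0(Y,\pi^*\Omega_X)$ into characters; the same argument shows the trace-product map never vanishes for an abelian Galois cover of a curve of genus $\geq2$. Your fallback $\lambda^3=c$ is itself a $\mu_3$-cover and fails in the same way.

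In fact rank $3$ is hopeless on a genus-$2$ curve. The conditions $\tr\theta=\tr\theta^2=0$ force characteristic polynomial $T^3-c$. Then $\EEnd(\calH,\theta)$ is a finite $\calO_X$-algebra inside the generic centralizer, which is the function ring of the spectral cover. So it embeds, compatibly with $\theta$, into the pushforward of the structure sheaf of the normalized spectral curve. The argument above, together with the easy split case $c=\beta^3$, then contradicts $\rH^1(\theta)=0$.

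\textbf{The missing idea} is a cover with non-abelian monodromy on which the trace-product map vanishes. Finding one is a genuinely hard existence problem. The paper does not construct it; it imports it from Landesman--Litt's solution of Prill's problem (Lemma~\ref{lem:Prill-problem}). That input is a genus-$2$ curve $X$, a finite \'etale connected cover $\pi:Y\to X$, and $0\neq\lambda\in\rH^0(Y,\Omega_Y)$ with $\tr_{Y/X}(\lambda\alpha)=0$ for all $\alpha\in\rH^0(Y,\Omega_Y)$.

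With that input, the rest is essentially your argument. Take $\calH=\pi_*\calO_Y$ with $\theta$ given by multiplication by $\lambda$.
\begin{itemize}
\item It is semistable of degree $0$ because it becomes trivial on a Galois closure; no spectral-curve stability analysis is needed.
\item It is non-nilpotent because $Y$ is reduced and $\lambda\neq0$.
\item It is cohomologically lift-independent because $\theta$ factors as $T_X\to\pi_*\calO_Y\hookrightarrow\EEnd(\calH,\theta)$, and the first map on $\rH^1$ is Serre dual to the vanishing trace-product map.
\end{itemize}
In particular, your remark that ``\'etale covers do not work'' is a misdiagnosis: only abelian ones fail.
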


The construction relies on a solution to Prill's problem in \cite{LandesmanLitt2024}.

\begin{lemma}\label{lem:Prill-problem}
    There exists a projective smooth connected curve $X/\bC$ of genus $2$, 
    a finite \'etale connected cover $\pi : Y\to X$, 
    and a nonzero section $\lambda \in \mathrm{H}^0(Y,\pi^* \Omega_{X})=\mathrm{H}^0(Y,\Omega_{Y})$, 
    such that the trace-product map 
    \begin{align*}
        \mathrm{H}^0(Y,\Omega_{Y}) & \xrightarrow[]{} \mathrm{H}^0(X,\Omega_{X}^{\otimes 2}) \\
        \alpha & \mapsto \tr_{Y/X}(\lambda \alpha)
    \end{align*}
    is zero.
\end{lemma}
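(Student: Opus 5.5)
The approach is to reduce the statement to a vanishing-of-Kodaira--Spencer statement and then import the counterexample to Prill's problem of Landesman--Litt \cite{LandesmanLitt2024}.

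\textbf{Step 1: Serre duality.} For fixed $\lambda\in\rH^0(Y,\Omega_Y)$, the map $\alpha\mapsto\tr_{Y/X}(\lambda\alpha)$ is the composite
\[
\rH^0(Y,\Omega_Y)\xrightarrow{\cdot\lambda}\rH^0(Y,\Omega_Y^{\otimes2})=\rH^0(Y,\pi^*\Omega_X^{\otimes2})\xrightarrow{\tr_{Y/X}}\rH^0(X,\Omega_X^{\otimes2}).
\]
Since $\pi$ is finite étale, $T_Y=\pi^*T_X$. Serre duality on $X$ and on $Y$ identifies $\tr_{Y/X}$ with the dual of pullback $\pi^*:\rH^1(X,T_X)\to\rH^1(Y,T_Y)$. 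It identifies multiplication by $\lambda$ with the dual of cup product $\rH^1(Y,T_Y)\xrightarrow{\cup\lambda}\rH^1(Y,\calO_Y)$. The trace-product map therefore vanishes if and only if
\[
\rH^1(X,T_X)\xrightarrow{\pi^*}\rH^1(Y,T_Y)\xrightarrow{\cup\lambda}\rH^1(Y,\calO_Y)
\]
is zero. By Griffiths' formula, $\xi\mapsto\pi^*\xi\cup\lambda$ is the derivative of the period map of $Y$ applied to $\lambda$, in the direction of the étale-induced deformation of $Y$ along $\xi$. So the lemma becomes the following claim: there is a genus-$2$ curve $X$, a connected finite étale cover $Y\to X$, and a nonzero holomorphic form $\lambda$ on $Y$ whose Hodge class stays in $F^1$ to first order along every deformation of $X$.

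\textbf{Step 2: input from Landesman--Litt.} Their solution of Prill's problem produces a genus-$2$ curve $X$ and a connected cover $\pi:Y\to X$ such that the local system $\pi_*\bC$ has a summand $\bL$ whose polarized VHS $\bL\otimes R^1$ has the following property along the whole isomonodromic (Teichmüller) deformation of $X$: it is unitary with Hodge numbers forcing its $(1,0)$-part to be rigid. Concretely, the relevant sub-VHS of $\rH^1(Y)$ is isotrivial, or it has a direction in $F^1$ that is flat. Equivalently, in their formulation, a fibre $\pi^{-1}(x)$ moves in a pencil. Their argument proves this by exhibiting a nonzero $\lambda\in\rH^0(Y,\Omega_Y)$ in the kernel of the Kodaira--Spencer map above; I would either quote that statement directly or extract it from their proof.

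\textbf{Step 3: the dictionary, as a fallback.} If only the pencil formulation is citable, I would translate it using Riemann--Roch. The condition $h^0(\calO_Y(\pi^{-1}(x)))\ge2$ means the fibre points impose dependent conditions on $\rH^0(\Omega_Y)$. Running this over the family of fibres and dualizing through the decomposition $\pi_*\calO_Y=\calO_X\oplus\calE$ with $\calE$ trace-self-dual, the dependency produces $\lambda$ with $\pi^*\xi\cup\lambda=0$ for all $\xi$. Step 1 then converts this back to the vanishing of the trace-product map.

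\textbf{Main obstacle.} The main obstacle is this last translation, or more precisely pinning down exactly which form of the Landesman--Litt result gives a single global $\lambda$ that is annihilated by Kodaira--Spencer in all of the $3$ directions of $\rH^1(X,T_X)$ simultaneously, rather than information fibre by fibre. I would first try to cite their isomonodromy/Hodge-theoretic statement directly, since it is phrased in exactly these terms.
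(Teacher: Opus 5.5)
Your main line agrees with the paper. The paper proves this lemma purely by citation: the cover comes from \cite[Proposition~2.4]{LandesmanLitt2024}, and the vanishing of the trace-product map comes from the proof of \cite[Proposition~2.3]{LandesmanLitt2024}. Your Step~1 is correct and is the mechanism used there. Serre duality identifies the trace-product map with the dual of $\xi\mapsto\pi^*\xi\cup\lambda$, $\rH^1(X,T_X)\to\rH^1(Y,\calO_Y)$; this is the same duality the paper uses in Lemma~\ref{lem:non-nilp-quasi-indep-Higgs}, and by Griffiths it is the derivative of the period map at $\lambda$.

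However, your account of their input is inaccurate, and that is what makes your ``main obstacle'' look hard. The input is Markovi\'c's genus-$2$ counterexample to the Putman--Wieland conjecture: a connected \'etale cover $Y\to X$ and a nonzero class in $\rH^1(Y,\mathbb{Q})$ with finite orbit under a finite-index subgroup of the mapping class group. After a finite base change of the moduli of $X$, this class becomes a global flat section of the Gauss--Manin system of the family of covers. By the theorem of the fixed part, the global flat sections form a constant weight-one sub-VHS, so its $(1,0)$-part is nonzero and flat. Any nonzero $\lambda$ in it satisfies $\pi^*\xi\cup\lambda=0$ for all $\xi\in\rH^1(X,T_X)$ simultaneously. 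So the single global $\lambda$ comes directly from Hodge theory, not from fibrewise information.

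The pencil property is then \emph{deduced} from $\lambda$. For each $x$, the functional $\beta\mapsto\tr_{Y/X}(\lambda\beta)(x)$ on $\Omega_Y|_{\pi^{-1}(x)}$ kills the image of $\rH^0(Y,\Omega_Y)$. It is nonzero wherever $\lambda$ is not identically zero on the fibre, by nondegeneracy of the trace form of an \'etale algebra. So the pencil property is a consequence, not an equivalent formulation.

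This is why your fallback Step~3 does not work: it runs the implication backwards. Put $\calE=\pi_*\calO_Y\otimes\Omega_X$, which is self-dual up to $\Omega_X^{\otimes 2}$ via $(\beta,\gamma)\mapsto\tr_{Y/X}(\beta\gamma)$. Let $\mathcal{G}\subset\calE$ be the image of $\rH^0(X,\calE)\otimes\calO_X\to\calE$. By Riemann--Roch, ``every fibre moves in a pencil'' says exactly that $\rk\mathcal{G}<\rk\calE$, i.e.\ that the trace-orthogonal $\mathcal{G}^\perp\subset\calE$ is a nonzero subsheaf. A $\lambda$ as in the lemma is a nonzero \emph{global} section of $\mathcal{G}^\perp$, automatically lying in $\mathcal{G}\cap\mathcal{G}^\perp$. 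The pencil condition alone gives no reason for $\mathcal{G}^\perp$ to have a nonzero global section. You should drop Step~3 and cite the vanishing from the proof of \cite[Proposition~2.3]{LandesmanLitt2024}, as the paper does.
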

\begin{proof}
    The construction is in \cite[Proposition 2.4]{LandesmanLitt2024}. 
    The proof of the required properties is in the proof of \cite[Proposition 2.3]{LandesmanLitt2024}.
\end{proof}

\begin{lemma}\label{lem:semi-stability-of-fet-pushforward-of-structure-sheaf}
    Let $\pi : Y \to X$ be a finite \'etale map of projective smooth connected curves over an algebraically closed field $K$.
    Then $\pi_{*}\calO _{Y}$ is a semistable vector bundle of degree $0$ on $X$. 
\end{lemma}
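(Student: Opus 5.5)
Degree and semistability of $\pi_*\calO_Y$ are handled separately.

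For the degree, I would use that $\pi$ is finite étale of degree $n$, so $\pi_*\calO_Y$ is locally free of rank $n$. The trace pairing $\pi_*\calO_Y\otimes\pi_*\calO_Y\to\calO_X$, $(a,b)\mapsto\tr_{Y/X}(ab)$, is perfect because $\pi$ is étale (the discriminant is a unit). This gives $\pi_*\calO_Y\cong(\pi_*\calO_Y)^\vee$, hence $\deg\pi_*\calO_Y=-\deg\pi_*\calO_Y$ and therefore $\deg\pi_*\calO_Y=0$. Alternatively, Riemann--Hurwitz gives $g_Y-1=n(g_X-1)$. Since $\chi(X,\pi_*\calO_Y)=\chi(Y,\calO_Y)=n(1-g_X)$, Riemann--Roch on $X$ again forces degree $0$.

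For semistability, I would argue by contradiction. Suppose $\calF\subset\pi_*\calO_Y$ is a subbundle with $\deg\calF>0$. By adjunction, the inclusion corresponds to a nonzero map $\pi^*\calF\to\calO_Y$. This is not quite enough by itself, since $\pi^*\calF$ has positive degree $n\deg\calF$ but rank possibly larger than one, so a map to $\calO_Y$ is not ruled out directly. The cleaner route is to use the converse fact that pullback along a finite separable map preserves semistability in characteristic $0$; this is where $\mathrm{char}\,K=0$ enters.

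The key step is then to consider $\pi^*\pi_*\calO_Y$. Since $\pi$ is étale, $Y\times_XY\to Y$ is finite étale with a section given by the diagonal, so I would aim to show that $\pi^*\pi_*\calO_Y\cong\pi'_*\calO_{Y\times_XY}$. I would first try to pass to the Galois closure $Z\to X$ with group $G$. There $\rho^*\pi_*\calO_Y\cong\calO_Z^{\oplus n}$, because over the Galois closure the fiber product splits into $n$ copies of $Z$. A trivial bundle is semistable of degree $0$, so for any $\calF\subset\pi_*\calO_Y$, the pullback $\rho^*\calF\subset\calO_Z^{\oplus n}$ has $\deg\rho^*\calF\le0$. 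Since $\deg\rho^*\calF=|G|\deg\calF$, we get $\deg\calF\le0$. The same argument applied to saturated subsheaves gives semistability.

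The main point needing care is the trivialization $\rho^*\pi_*\calO_Y\cong\calO_Z^{\oplus n}$, where $Z$ is the Galois closure of $Y\to X$. By flat base change, $\rho^*\pi_*\calO_Y\cong(\mathrm{pr}_Z)_*\calO_{Y\times_XZ}$. Because $Z$ dominates $Y$ and is Galois over $X$, the fiber product $Y\times_XZ$ is a disjoint union of $n$ copies of $Z$, each mapping isomorphically. This yields the trivialization, and with it the degree bound above.
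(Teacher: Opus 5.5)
Your proof is correct and is essentially the paper's: pull back along the Galois closure $\rho\colon Z\to X$, where $\rho^*\pi_*\calO_Y\cong\calO_Z^{\oplus n}$, and use that degrees of subsheaves get multiplied by $\deg\rho$. For the degree the paper simply cites the Stacks project, your trace-duality or Riemann--Hurwitz computation is a valid self-contained replacement, and the aside about pullback preserving semistability in characteristic $0$ is unnecessary, since only the easy identity $\deg\rho^*\calF=\deg(\rho)\deg\calF$ is used and it holds in any characteristic.
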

\begin{proof}
    By \cite[\href{https://stacks.math.columbia.edu/tag/0AYZ}{Tag 0AYZ}]{stacks-project}, $\deg(\pi_{*}\calO _{Y})=0$.
    
    It remains to show the semistability of $\pi_{*} \calO _{Y}$. 
    Let $d=\deg(\pi)$. 
    Let $r : Z\to X$ be a Galois closure of $\pi$. Then 
    \[r^*\pi_{*} \calO _{Y} \cong  \calO _{Z}^{d},\]
    because $Y\times_{X}Z$ is a disjoint union of $d$ copies of $Z$. 
    If $F\subset \pi_{*}\calO _{Y}$ is a subbundle of positive degree, 
    then $r^*F\subset r^*\pi_{*} \calO _{Y} \cong  \calO _{Z}^{d}$ is also a subbundle of positive degree. 
    But the trivial bundle $ \calO _{Z}^{d}$ is semistable, so it has no positive degree subbundle. 
    Therefore every subbundle of $\pi_{*}\calO _{Y}$ has degree at most $0$, and hence $\pi_{*}\calO _{Y}$ is semistable.
\end{proof}

\begin{lemma}\label{lem:non-nilp-quasi-indep-Higgs}
    Let $X$ be a projective smooth connected curve over an algebraically closed field $K$,
    let $\pi : Y \to X$ be a finite \'etale connected cover, 
    and let $\lambda \in \mathrm{H}^0(Y,\pi^* \Omega_{X})=\mathrm{H}^0(Y,\Omega_{Y})$ be a nonzero section. 
    Put $\mathcal{H}=\pi_{*}\calO _{Y}$ and let 
    \[ \theta : \pi_{*}\calO _{Y} \xrightarrow[]{\pi_{*}\lambda} \pi_{*}\Omega_{Y} \cong  \pi_{*}\pi^*\Omega_{X} \cong  \pi_{*}\calO _{Y} \otimes \Omega_{X}.\] 
    Then $(\mathcal{H},\theta)$ is a non-nilpotent semistable Higgs bundle of degree $0$ on $X$.  
    Moreover, if the trace product map 
    \[ \mathrm{H}^0(Y,\Omega_{Y}) \xrightarrow[]{\tr_{Y/X}(\lambda - )} \mathrm{H}^0(X,\Omega_{X}^{\otimes 2}) \]
    is zero, then $(\mathcal{H},\theta)$ is cohomologically lift-independent. 
\end{lemma}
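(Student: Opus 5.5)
The plan is to check the three properties of $(\calH,\theta)$ directly, and then reduce cohomological lift-independence to the vanishing of the trace-product map via Serre duality, following the argument of Lemma \ref{lemma: Basic lemma}.

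\emph{Basic properties.} By Lemma \ref{lem:semi-stability-of-fet-pushforward-of-structure-sheaf}, $\calH=\pi_*\calO_Y$ is a semistable vector bundle of degree $0$. Any Higgs subsheaf is in particular a subsheaf, so it has slope $\leq 0$, and hence $(\calH,\theta)$ is semistable as a Higgs bundle. The Higgs field is $\calO_X$-linear, since it is the pushforward of multiplication by $\lambda$. The condition $\theta\wedge\theta=0$ is automatic on a curve. For non-nilpotence, observe that $\theta^m$ is $\pi_*$ of multiplication by $\lambda^m\in \rH^0(Y,\pi^*\Omega_X^{\otimes m})$. Since $Y$ is integral and $\lambda\neq0$, we have $\lambda^m\neq0$, so $\theta^m\neq 0$ for all $m$.

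\emph{Describing $\EEnd(\calH,\theta)$.} At a general point of $X$, $\pi$ is étale, and the local endomorphism $A$ attached to $\theta$ is multiplication by the function $\lambda/\omega$ on the fiber algebra. The plan is to show that the centralizer of $\theta$ is $\pi_*\calO_Y$, acting by multiplication. One inclusion is clear. For the other, at a generic point $\lambda/\omega$ takes distinct values on the $d$ sheets over a general point. This should hold because $\lambda$ is not pulled back from a proper intermediate cover, and I would argue it generically, possibly replacing the claim by the weaker inclusion below. Then $A$ is regular semisimple and its centralizer is $\pi_*\calO_Y$.

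\emph{The weaker inclusion is actually enough.} Rather than determine the centralizer exactly, I only need that the map $T_X\to\EEnd(\calH)$ induced by $\theta$ is zero on $\rH^1$ after composing to $\rH^1(X,\EEnd(\calH,\theta))$. This map factors as
\[T_X\xrightarrow{\lambda}\pi_*\calO_Y\hookrightarrow\EEnd(\calH,\theta),\]
where the first arrow is induced by $\lambda\in \rH^0(X,\pi_*\calO_Y\otimes\Omega_X)$. Hence it suffices to show that
\[\rH^1(X,T_X)\to \rH^1(X,\pi_*\calO_Y)=\rH^1(Y,\calO_Y)\]
vanishes. This is the step I expect to be the main obstacle, and it is handled by duality.

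\emph{Duality step.} By Serre duality on $X$ and on $Y$, together with the fact that $\pi$ is finite, the map above is dual to
\[\rH^0(Y,\Omega_Y)=\rH^0(X,\pi_*\calO_Y\otimes\Omega_X)\to \rH^0(X,\Omega_X^{\otimes2}).\]
Here I use that for a finite étale map the duality $(\pi_*\calO_Y)^\vee\cong\pi_*\calO_Y$ is given by the trace pairing. Under this identification the dual of multiplication by $\lambda$ is $\alpha\mapsto\tr_{Y/X}(\lambda\alpha)$. By hypothesis this trace-product map is zero, so $\rH^1(\theta)=0$, which is exactly cohomological lift-independence.
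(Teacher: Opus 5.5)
Your proposal is correct and follows the paper's proof essentially step for step. Semistability comes from Lemma \ref{lem:semi-stability-of-fet-pushforward-of-structure-sheaf}, and non-nilpotence from $Y$ being reduced. The key step is the factorization $T_X\to\pi_*\calO_Y\hookrightarrow\EEnd(\calH,\theta)$, after which Serre duality with the trace identification $(\pi_*\calO_Y)^\vee\cong\pi_*\calO_Y$ turns $\rH^1(T_X)\to\rH^1(\pi_*\calO_Y)$ into the trace-product map.

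The digression on computing the full centralizer is unnecessary. Its claim that $\lambda/\omega$ separates the sheets can fail when $\lambda$ is pulled back from an intermediate cover. You rightly discard it, so it does not affect the argument.
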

\begin{proof}
    By Lemma \ref{lem:semi-stability-of-fet-pushforward-of-structure-sheaf}, the vector bundle $\mathcal{H}$ is semistable of degree $0$. 
    Hence the Higgs bundle $(\mathcal{H},\theta)$ is semistable of degree $0$.    
    Since $\lambda \ne 0$, there exists a non-empty affine open subset $U$ of $X$, a local coordinate $t$ of $U$, and a nonzero section $a \in \mathrm{H}^0(\pi^{-1}(U),\calO _{Y})=\mathrm{H}^0(U,\pi_{*}\calO _{Y})$, such that 
    \[ \lambda = a \pi^* {\rm d}t.\]
    Let $\partial$ be the dual basis of ${\rm d}t$. 
    Then $\theta(\partial)$ is multiplication by $a$ on $\pi_{*}\calO _{Y}|_{U}$. As $Y$ is a reduced scheme, $a$ is a non-nilpotent element of $\mathrm{H}^0(\pi^{-1}(U),\calO _{Y})$, so multiplication by $a$ is not nilpotent. 
    Thus the Higgs field $\theta$ is not nilpotent.   
    The $\calO _{X}$-algebra $\pi_{*}\calO _{Y}$ acts on itself by multiplication, which induces an injection 
    \[ i : \pi_{*}\calO _{Y} \xrightarrow[]{} \EEnd(\mathcal{H},\theta),\] 
    because multiplication operators commute with multiplication by $\lambda$ and $Y$ is an integral scheme. 
    The canonical map $\theta : T_{X} \to \EEnd(\mathcal{H},\theta)$ factors as 
    \[ \theta : T_{X} \xrightarrow[]{u_{\lambda}} \pi_{*}\calO _{Y} \xrightarrow[]{i} \EEnd(\mathcal{H},\theta),\]
    where $u_{\lambda}$ is obtained by contracting the section $\lambda$ with vector fields on $X$.  
    By Serre duality, the dual of 
    \[ \mathrm{H}^1(X,T_{X}) \xrightarrow[]{\mathrm{H}^1(u_{\lambda})} \mathrm{H}^1(Y,\calO _{Y})\cong  \mathrm{H}^1(X,\pi_*\mathcal{O}_Y)\] 
    is identified with the trace-product map 
    \[\mathrm{H}^0(Y,\Omega_{Y}) \xrightarrow[]{\tr_{Y/X}(\lambda - )} \mathrm{H}^0(X,\Omega_{X}^{\otimes 2})\]
    under the identification $\pi_*\mathcal{O}_Y\cong \HHom_{\mathcal{O}_X}(\pi_*\mathcal{O}_Y,\mathcal{O}_X)$ induced by $\operatorname{tr}_{Y/X}$.
    If the trace-product map is zero, then $\mathrm{H}^1(u_{\lambda})=0$, so $\mathrm{H}^1(\theta)=0$. 
    Thus $(\mathcal{H},\theta)$ is cohomologically lift-independent. 
\end{proof}

\begin{proof}[Proof of Theorem \ref{theo: non-nilpotent cohomologically lift-independent semistable}]
    By Lemma \ref{lem:Prill-problem}, there exists a projective smooth connected curve $X/\mathbb{C}$ of genus $2$, 
    a finite \'etale connected cover $\pi : Y\to X$, 
    and a nonzero section $\lambda \in \mathrm{H}^0(Y,\pi^* \Omega_{X})=\mathrm{H}^0(Y,\Omega_{Y})$, 
    such that the trace-product map 
      \[\mathrm{H}^0(Y,\Omega_{Y}) \xrightarrow[]{\tr_{Y/X}(\lambda - )} \mathrm{H}^0(X,\Omega_{X}^{\otimes 2})\]
    is zero. By Lemma \ref{lem:non-nilp-quasi-indep-Higgs}, $(\pi_{*}\calO _{Y},\pi_{*}\lambda)$ is a non-nilpotent cohomologically lift-independent semistable Higgs bundle on $X$. 
\end{proof}

\begin{lemma}\label{lem:spread-nonnilpotent}
Let $X/\bC$, $f:Y\to X$, and $\lambda\in \rH^0(Y,\Omega_{Y})=\rH^0(Y,f^*\Omega_X)$ be as in Lemma \ref{lem:Prill-problem}. Put
\[
    H:=f_*\calO_{Y},
\]
viewed as a commutative $\calO_{X}$-algebra, and let $\theta$ be the Higgs field on $H$ given by multiplication by $\lambda$, i.e.
\[
        \theta:H \xrightarrow[]{\ \lambda\ } H \otimes \Omega_{X}.
\]
After replacing the data by isomorphic data, there exist a finitely generated $\bZ$-subalgebra $R\subset \bC$, a smooth projective morphism
\[
        \pi:\calX \longrightarrow \Spec R
\]
with geometrically connected fibers of genus $2$, a finite \'etale morphism
\[
        f_{R}:\calY \longrightarrow \calX,
\]
and a section
\[
        \lambda_{R}\in \rH^0(\calY,\Omega_{\calY/R})
        =\rH^0(\calY,f_R^*\Omega_{\calX/R}),
\]
such that the following assertions hold.

\begin{enumerate}
\item After base change along $\Spec \bC\to \Spec R$, the tuple
\[
        (\calX,\calY,f_{R},\lambda_{R})
\]
is identified with $(X,Y,f,\lambda)$.

\item Let
\[
        \calH_{R}:=f_{R,*}\calO_{\calY},
\]
viewed as a commutative $\calO_{\calX}$-algebra, and let
\[
        \theta_{R}:\calH_{R}\longrightarrow
        \calH_{R}\otimes\Omega_{\calX/R}
\]
be the relative Higgs field defined by multiplication by $\lambda_{R}$. For every geometric point $\bar s\to \Spec R$, the geometric fiber
\[
        (\calH_{\bar s},\theta_{\bar s})
\]
is a non-nilpotent semistable Higgs bundle of degree $0$ on the smooth projective curve $\calX_{\bar s}$ of genus $2$.

\item For every geometric point $\bar s\to \Spec R$, the map
\[
        \rH^0(\calY_{\bar s},\Omega_{\calY_{\bar s}})
        \longrightarrow
        \rH^0(\calX_{\bar s},\Omega_{\calX_{\bar s}}^{\otimes 2}),
        \qquad
        \alpha\longmapsto
        \operatorname{Tr}_{\calY_{\bar s}/\calX_{\bar s}}
        (\lambda_{\bar s}\alpha)
\]
is zero. Consequently, $(\calH_{\bar s},\theta_{\bar s})$ is cohomologically lift-independent.
\end{enumerate}
\end{lemma}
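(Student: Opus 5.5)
The plan is a standard spreading-out argument followed by a fiberwise check. The one non-formal point is that the vanishing in (3) must hold at every geometric point, not merely generically; this is the step I expect to be the main obstacle.

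\emph{Spreading out.} The curves $X$, $Y$, the morphism $f$ and the form $\lambda$ are described by finitely many equations over $\bC$. Hence they descend to some finitely generated $\bZ$-subalgebra $R_0\subset\bC$, giving $\calX_0\to\Spec R_0$, $f_{R_0}:\calY_0\to\calX_0$ and $\lambda_{R_0}\in\rH^0(\calY_0,\Omega_{\calY_0/R_0})$ that recover $(X,Y,f,\lambda)$ over $\bC$. The following properties hold at the generic point and are open on the base (by the usual limit arguments of EGA IV, \S8 and \S17, and properness):
\begin{itemize}
\item $\calX\to\Spec R$ is smooth and projective, with geometrically connected fibers of genus $2$;
\item $f_R$ is finite \'etale and $\calY\to\Spec R$ has geometrically connected fibers;
\item $\lambda_R$ is nonzero on every fiber.
\end{itemize}
So after inverting an element of $R_0$ we may assume all of them, which gives (1). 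The identification $\Omega_{\calY/R}=f_R^*\Omega_{\calX/R}$ holds because $f_R$ is \'etale.

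\emph{Statement (2).} For each geometric point $\bar s$, the fiber $(\calX_{\bar s},\calY_{\bar s},f_{\bar s},\lambda_{\bar s})$ satisfies the hypotheses of Lemma~\ref{lem:non-nilp-quasi-indep-Higgs}: a connected finite \'etale cover of a smooth projective connected genus-$2$ curve over an algebraically closed field, with a nonzero $\lambda_{\bar s}$. Formation of $f_*\calO$ commutes with base change since $f_R$ is finite and flat. That lemma, together with Lemma~\ref{lem:semi-stability-of-fet-pushforward-of-structure-sheaf}, then gives non-nilpotence, semistability and degree $0$. This part needs characteristic $0$ only for the degree computation, and that computation works in any characteristic.

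\emph{Statement (3).} We must show that the trace-product map vanishes on every fiber. The map $\alpha\mapsto\operatorname{Tr}(\lambda_R\alpha)$ is an $\calO$-linear map of sheaves
\[
\pi_*f_{R,*}\Omega_{\calY/R}\longrightarrow\pi_*\Omega_{\calX/R}^{\otimes2}.
\]
After shrinking $\Spec R$, both sheaves are locally free and commute with base change. Indeed, $h^0(\Omega_{\calY_{\bar s}})=g(\calY_{\bar s})$ is constant, and $h^0(\Omega^{\otimes2}_{\calX_{\bar s}})=3$ is constant, so cohomology and base change applies; generic flatness plus shrinking also works. The map of locally free sheaves is therefore given by a matrix over $R$. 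It vanishes after base change to $\bC$, and $R\subset\bC$ is a domain, so the matrix is identically zero, hence zero on every fiber.

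The cohomological lift-independence then follows from the last assertion of Lemma~\ref{lem:non-nilp-quasi-indep-Higgs}, whose Serre duality argument is characteristic-free. Its use of $\pi_*\calO_Y\cong\HHom(\pi_*\calO_Y,\calO_X)$ via the trace requires $f$ \'etale, which holds on every fiber.
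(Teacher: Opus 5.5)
Your proposal is correct and follows essentially the same route as the paper. You spread out over a finitely generated $\bZ$-subalgebra and shrink so the fibers are connected genus-$2$ curves, get semistability and degree $0$ fiberwise from the Galois-closure argument, and obtain (3) by making both $\rH^0$'s locally free and compatible with base change, so that the trace-product map, which is zero over $\bC$, is zero over the domain $R$. The only minor difference is that you get non-nilpotence by spreading out fiberwise nonvanishing of $\lambda_R$ (and connectedness of the $\calY$-fibers) and then invoking Lemma~\ref{lem:non-nilp-quasi-indep-Higgs}, whereas the paper shrinks the base using that the locus where $\theta_0^r$ vanishes on the fiber is closed.
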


\begin{proof}
We first spread out the data. 
Since $\mathbb C$ is the filtered colimit of its finitely generated $\mathbb Z$-subalgebras, the projective smooth curve $X/\mathbb C$, the finite \'etale morphism $f:Y\to X$, and the section $\lambda\in H^0(Y,\Omega_Y)$ all descend, by standard limit arguments, to some finitely generated $\mathbb Z$-subalgebra $R_0\subset \mathbb C$.
Thus, after replacing $R_0$ by a localization, we obtain a smooth projective morphism
\[
        \pi_0:\calX_0\to \Spec R_0,
\]
a finite \'etale morphism
\[
        f_0:\calY_0\to \calX_0,
\]
and a section
\[
        \lambda_0\in \rH^0(\calY_0,\Omega_{\calY_0/R_0}),
\]
whose base change to $\bC$ recovers the original data $(X,Y,f,\lambda)$.

We now shrink $\Spec R_0$ further in order to make the desired properties hold on every geometric fiber. 
Since $R_0\subset \bC$, it is an integral domain. 
The number of geometric connected components of the fibers of a smooth proper morphism is locally constant, and the genus of the fibers of a smooth proper curve is locally constant. 
Hence, after localizing $R_0$, we may assume that all geometric fibers of $\calX_0\to \Spec R_0$ are connected smooth projective curves of genus $2$.

Set
\[
        \calH_{0}:=f_{0,*}\calO_{\calY_{0}}.
\]
Since $f_{0}$ is finite \'etale, $\calH_{0}$ is a finite locally free commutative $\calO_{\calX_{0}}$-algebra. 
The section $\lambda_{0}$ defines a relative Higgs field
\[
        \theta_{0}:\calH_{0}\longrightarrow \calH_{0}\otimes\Omega_{\calX_{0}/R_{0}}.
\]
The condition that the fiberwise Higgs field be non-nilpotent is open: 
indeed, if $r=\rk(\calH_0)$, then nilpotency is detected by the vanishing of $\theta_{0}^r$, and the vanishing locus of this morphism is closed on the base (as $\pi_{0}$ is smooth). 
Since the original Higgs field over $\bC$ is non-nilpotent, after shrinking $R_{0}$, we may assume that $\theta_{\bar s}$ is non-nilpotent for every geometric point $\bar s\to \Spec R_{0}$.

For every geometric point $\bar s\to \Spec R_{0}$, the bundle
\[
        \calH_{\bar s}
        =f_{\bar s,*}\calO_{\calY_{\bar s}}
\]
has degree $0$ and is semistable. 
Indeed, after pulling back along a Galois closure of $f_{\bar s}$, the bundle $\calH_{\bar s}$ becomes a trivial bundle; hence it cannot have a subbundle of positive degree. 
Thus the Higgs bundle $(\mathcal H_{\bar s},\theta_{\bar s})$ is semistable of degree $0$.

It remains to spread out the cohomological lift-independence. 
Since $f_0$ is finite \'etale, we have
\[
        \Omega_{\mathcal Y_0/R_0}\simeq f_0^*\Omega_{\mathcal X_0/R_0}.
\]
The section $\lambda_0$ gives a trace-product morphism of coherent $R_0$-modules
\[
        \tau_{\lambda_0}:
        \rH^0(\mathcal Y_0,\Omega_{\mathcal Y_0/R_0})
        \longrightarrow
        \rH^0(\mathcal X_0,\Omega_{\mathcal X_0/R_0}^{\otimes 2}),
        \qquad
        \alpha\longmapsto
        \operatorname{Tr}_{\mathcal Y_0/\mathcal X_0}(\lambda_0\alpha).
\]
After further localization, cohomology and base change hold for the two vector bundles appearing above, so $\tau_{\lambda_0}$ is a morphism between finite locally free $R_0$-modules. 
Its base change to $\mathbb C$ is zero by the choice of the original data. 
Since $R_0$ is an integral domain, a morphism of finite $R_0$-modules which vanishes at the generic point becomes zero after localizing $R_0$ once more. 
Hence we may assume that
\[
        \tau_{\lambda_0}=0
\]
over $\Spec R_0$. 
By base change, the trace-product map is therefore zero on every geometric fiber.

Finally, by Lemma \ref{lem:non-nilp-quasi-indep-Higgs}, the vanishing of the trace-product map is Serre-dual to the vanishing of the map
\[
        \rH^1(\mathcal X_{\bar s},T_{\mathcal X_{\bar s}})
        \longrightarrow 
        \rH^1(\mathcal X_{\bar s},\mathcal H_{\bar s})
        \hookrightarrow
        \rH^1(\mathcal X_{\bar s},
        \EEnd(\mathcal H_{\bar s},
        \theta_{\bar s})).
\]
Thus every fiber $(\mathcal H_{\bar s},\theta_{\bar s})$ is cohomologically lift-independent. 
Taking $R$ to be the final localization of $R_0$ gives the lemma.
\end{proof}

\begin{theorem}\label{thm:integral-nonnilpotent-example}
For all sufficiently large primes $p$, there exists a complete algebraically
closed extension $C/\mathbb Q_p$, a smooth projective formal curve
$\mathfrak X$ over $\mathcal O_C$ of genus $2$, and a non-nilpotent
$\frac{1}{p-1}$-Hitchin-small Higgs bundle $(\mathcal H,\theta)$ on
$\mathfrak X$, whose generic fiber is semistable of degree $0$, such that
$(\mathcal H,\theta)$ is integrally lift-independent.
\end{theorem}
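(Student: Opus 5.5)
The plan is to specialize the characteristic-zero family of Lemma \ref{lem:spread-nonnilpotent} to a $p$-adic point for large $p$, and then check integral lift-independence directly with the transition formula of Lemma \ref{lem:action-of-lift}. The essential extra input is that the relevant cohomology map vanishes \emph{integrally} over the base ring $R$, not merely after inverting $p$.

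\emph{Step 1: choose a $p$-adic point.} Let $R\subset\bC$, $\calX\to\Spec R$, $f_R:\calY\to\calX$ and $\lambda_R$ be as in Lemma \ref{lem:spread-nonnilpotent}. After a further localization, I arrange three things.
\begin{enumerate}
\item $R$ is smooth over $\bZ[1/N]$ for some $N$ (generic smoothness).
\item Relative Serre duality and cohomology-and-base-change hold, so that $\rH^1(\calX,T_{\calX/R})$ and $\rH^1(\calY,\calO_{\calY})$ are finite locally free and commute with arbitrary base change. This uses that $\rH^2$ vanishes on relative curves.
\item The map $\tau_{\lambda_R}=0$ over $R$.
\end{enumerate}
Relative duality identifies the dual of
\[
\rH^1(u_{\lambda_R}):\rH^1(\calX,T_{\calX/R})\to\rH^1(\calY,\calO_{\calY})
\]
with $\tau_{\lambda_R}$. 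Hence $\rH^1(u_{\lambda_R})=0$ already over $R$, and it stays zero after any base change.

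Now fix a prime $p\nmid N$ for which $R/p\neq0$. This holds for all sufficiently large $p$, since $R$ is a finitely generated domain of characteristic $0$. Pick a closed point of $\Spec R/p$ with residue field $\kappa\subset\overline{\mathbb F}_p$. Smoothness of $R$ over $\bZ_{(p)}$ and Hensel's lemma lift it to a map $R\to W(\overline{\mathbb F}_p)$. Let $C$ be the completion of an algebraic closure of $W(\overline{\mathbb F}_p)[1/p]$, and base change everything along $R\to\calO_C$.

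Take $\frakX$ and $\frakY$ to be the $p$-adic completions, so $\frakX$ is a smooth projective formal curve of genus $2$. Being a base change from $W$, it admits a flat $A_2$-lifting, since it lifts to $A_{\inf}$ via $W\to A_{\inf}$. By base change, $\rH^1(\frakX,T_\frakX)\to\rH^1(\frakX,f_*\calO_\frakY)$ is zero integrally.

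\emph{Step 2: define the Higgs bundle.} Set $\calH=f_*\calO_{\frakY}$. Identify $\rho\widehat\Omega^1_\frakX(-1)$ with $\widehat\Omega^1_\frakX$ up to the unit choice fixed by $\epsilon$, and define $\theta$ as multiplication by $c\lambda$ with $c:=p$. I choose $c$ so that two conditions hold:
\begin{enumerate}
\item any $\partial\in\rho^{-1}(\widehat\Omega^1(-1))^\vee$ acts by $p\rho^{-1}\cdot(\text{integral})$, which is topologically nilpotent, so $\calH$ is $\frac1{p-1}$-Hitchin-small;
\item exponentials of the form $\exp(\rho p\,\psi)$, with $\psi$ integral, converge.
\end{enumerate}
Scaling $\theta$ by a nonzero constant preserves non-nilpotence and the semistability of degree $0$ of the generic fiber. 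Both properties hold for the fiber at the geometric point $\Spec C$ by Lemma \ref{lem:spread-nonnilpotent}(2).

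\emph{Step 3: integral lift-independence.} Let two liftings differ by a \v{C}ech cocycle $(a_{\lambda\mu})$ of $\rho^{-1}T_\frakX(1)$ on a cover by small affines. As in Lemma \ref{lem:non-nilp-quasi-indep-Higgs}, $\theta(a_{\lambda\mu})$ is multiplication by the section $p\,u_\lambda(a_{\lambda\mu})$ of the algebra $f_*\calO_\frakY$, up to the fixed unit. By the integral vanishing from Step 1, we can write
\[
u_\lambda(a_{\lambda\mu})=\psi_\mu-\psi_\lambda,\qquad \psi_\lambda\in\Gamma(\frakU_\lambda,f_*\calO_\frakY).
\]
The operators $\varphi_\lambda=\exp(\rho p\,\psi_\lambda)$ are multiplication operators, so they converge integrally, commute with $\theta$, and commute with each other because the algebra is commutative. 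They therefore give an isomorphism between the descent data $((M_\lambda,\theta_\lambda),\varphi_{\lambda\mu})$ and $((M_\lambda,\theta_\lambda),\varphi_{\lambda\mu}\circ\exp(\rho\theta(a_{\lambda\mu})))$. By Lemma \ref{lem:action-of-lift}, this gives integral lift-independence.

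\emph{Expected main obstacle.} The delicate point is the integrality in Steps 1 and 3. Cohomological lift-independence over $C$ only kills the map after inverting $p$, which would leave $p^N$-torsion obstructions. The rescue is that $\tau_{\lambda_R}=0$ holds over $R$ itself, combined with base change for $\rH^1$ on curves and a compatible integral Serre duality. A secondary bookkeeping issue is matching the normalizations $\rho$, $t$ and the Tate twist, to make sure the factor $c=p$ suffices for both smallness and convergence of the exponentials. If it does not, I would increase $c$ to a higher power of $p$, which changes none of the other properties.
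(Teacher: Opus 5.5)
Your proposal is correct and follows the paper's argument. You specialize the model of Lemma~\ref{lem:spread-nonnilpotent} to an $\calO_C$-point for large $p$, replace $\theta'$ by $p\theta'$ to get $\frac{1}{p-1}$-Hitchin-smallness, and use the vanishing of $\rH^1(u_\lambda)$ to write the twisting cocycle of Lemma~\ref{lem:action-of-lift} as a non-abelian coboundary $\exp(ps_j)\exp(-ps_i)$ inside the commutative algebra $f_*\calO_{\mathfrak Y}$.

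The differences are minor. You get the $p$-adic point by Hensel-lifting from a smooth model to $W(\overline{\mathbb F}_p)$ instead of using a number-field point. You also justify the integral vanishing of $\rH^1(u_\lambda)$ by relative duality and base change over $R$, where the paper only asserts it.

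One small normalization fix: since $a_{\lambda\mu}$ lies in $\rho^{-1}T_{\frakX}(1)$, your $\psi_\lambda$ live in $\rho^{-1}f_*\calO_{\mathfrak Y}$, not in $f_*\calO_{\mathfrak Y}$. So the exponent $\rho p\psi_\lambda$ is $p$ times an integral section, which still converges for $p\geq 3$.
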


\begin{proof}
Let
\[
        (\mathcal X,\mathcal Y,f_R,\lambda_R)
\]
be the model over the finitely generated $\mathbb Z$-algebra $R$ constructed
in Lemma~\ref{lem:spread-nonnilpotent}. We first choose a $p$-adic
specialization of $R$. Since $R$ is finitely generated over $\mathbb Z$,
there exists a closed point of $\Spec(R\otimes_{\mathbb Z}\mathbb Q)$, with
residue field a number field $L$. After excluding finitely many primes, the
images of all elements of $R$ are integral at every prime of $L$ above
$p$. Hence, for every sufficiently large prime $p$, after choosing an
embedding $L\hookrightarrow \overline{\mathbb Q}_p$, we obtain a homomorphism
\[
        R\longrightarrow \mathcal O_C
\]
for some complete algebraically closed extension $C/\mathbb Q_p$.

Base change the whole family along $R\to \mathcal O_C$. Let
\[
        \mathcal X_{\mathcal O_C}
        :=\mathcal X\times_{\Spec R}\Spec\mathcal O_C,
        \qquad
        \mathcal Y_{\mathcal O_C}
        :=\mathcal Y\times_{\Spec R}\Spec\mathcal O_C.
\]
Let
\[
        f:\mathcal Y_{\mathcal O_C}\to \mathcal X_{\mathcal O_C}
\]
be the induced finite \'etale morphism, and put
\[
        \mathcal H_{\mathcal O_C}
        :=f_*\mathcal O_{\mathcal Y_{\mathcal O_C}},
\]
viewed as a commutative $\mathcal O_{\mathcal X_{\mathcal O_C}}$-algebra.
The section $\lambda_R$ gives, after base change, a section
\[
        \lambda\in \rH^0(\mathcal Y_{\mathcal O_C},
        \Omega_{\mathcal Y_{\mathcal O_C}/\mathcal O_C})
        =\rH^0(\mathcal Y_{\mathcal O_C},
        f^*\Omega_{\mathcal X_{\mathcal O_C}/\mathcal O_C}).
\]
Multiplication by $\lambda$ defines an ordinary relative Higgs field
\[
        \theta' : \mathcal H_{\mathcal O_C}\longrightarrow
        \mathcal H_{\mathcal O_C}\otimes
        \Omega_{\mathcal X_{\mathcal O_C}/\mathcal O_C}.
\]

Let $\mathfrak X$ be the $p$-adic completion of
$\mathcal X_{\mathcal O_C}$, and let $\mathcal H$ be the $p$-adic
completion of $\mathcal H_{\mathcal O_C}$. It is a commutative
$\mathcal O_{\mathfrak X}$-algebra as well as the underlying vector bundle
of the resulting Higgs bundle. The generic fiber of $\mathfrak X$ is a
smooth projective curve of genus $2$. By
Lemma~\ref{lem:spread-nonnilpotent}, the generic fiber of
$(\mathcal H,\theta')$ is semistable of degree $0$, and the Higgs field
$\theta'$ is non-nilpotent.

We now scale the Higgs field. Choose once and for all a trivialization of the Tate twist. Since
\[
        v_p(p)=1>\frac{1}{p-1}
\]
for $p\ge 3$, the scaled Higgs field
\[
        \theta:=p\theta'
\]
takes values in the integral Higgs sheaf
\[
        \mathcal H\otimes \rho \widehat{\Omega}^1_{\mathfrak X/\mathcal O_C}(-1),
        \qquad \rho=\zeta_p-1,
\]
and is $\frac{1}{p-1}$-Hitchin-small. Multiplication by the nonzero scalar
$p$ does not change nilpotency on the generic fiber, so $\theta$ is still
non-nilpotent generically.

It remains to prove integral lift-independence. Let
\[
        \widetilde{\mathfrak X}_1,\widetilde{\mathfrak X}_2
\]
be two flat $A_2$-liftings of $\mathfrak X$. Choose a small affine open
cover $\mathfrak U=\{U_i\}$ of $\mathfrak X$. By the usual deformation
theory of $A_2$-liftings, the difference class
\[
        [\widetilde{\mathfrak X}_2]-[\widetilde{\mathfrak X}_1]
\]
is represented by a Čech $1$-cocycle
\[
        (a_{ij})\in
        \check Z^1(\mathfrak U,\rho^{-1}T_{\mathfrak X/\mathcal O_C}(1)).
\]
After the chosen trivialization of the Tate twist, write
\[
        (b_{ij}):= (\rho a_{ij})\in
        \check Z^1(\mathfrak U,T_{\mathfrak X/\mathcal O_C}).
\]

The Higgs field $\theta'$ factors through the commutative
$\mathcal O_{\mathfrak X}$-algebra $\mathcal H$:
\[
        T_{\mathfrak X/\mathcal O_C}
        \xrightarrow{\,u_\lambda\,}
        \mathcal H
        \hookrightarrow
        \EEnd(\mathcal H,\theta'),
\]
where $u_\lambda$ is contraction with $\lambda$. By the trace-product
vanishing in Lemma~\ref{lem:spread-nonnilpotent}, equivalently by Serre duality,
the induced map
\[
        \rH^1(\mathfrak X,T_{\mathfrak X/\mathcal O_C})
        \xrightarrow{\,H^1(u_\lambda)\,}
        \rH^1(\mathfrak X,\mathcal H)
\]
is zero. Hence the cocycle $u_\lambda(b_{ij})$ is a Čech coboundary. Thus,
after refining the cover if necessary, there exist local sections
\[
        s_i\in \Gamma(U_i,\mathcal H)
\]
such that
\[
        u_\lambda(b_{ij})=s_j-s_i
        \qquad \text{on } U_{ij}.
\]

By Lemma~\ref{lem:action-of-lift}, the transition from the Higgs bundle
associated with $\widetilde{\mathfrak X}_1$ to that associated with
$\widetilde{\mathfrak X}_2$ is obtained by multiplying the original gluing
maps by
\[
        \exp\bigl(\rho\,\theta(a_{ij})\bigr).
\]
Since $\theta=p\theta'$ and $b_{ij}=\rho a_{ij}$, we have
\[
        \rho\,\theta(a_{ij})
        =
        p\,u_\lambda(b_{ij})
        =
        p(s_j-s_i).
\]
The sections $s_i$ lie in the commutative algebra $\mathcal H$. Therefore
the endomorphisms $ps_i$ commute on overlaps, and the $p$-adic exponential
gives well-defined automorphisms
\[
        \varphi_i:=\exp(ps_i)\in
        \Gamma(U_i,\operatorname{Aut}(\mathcal H,\theta)).
\]
Moreover,
\[
        \exp\bigl(\rho\,\theta(a_{ij})\bigr)
        =
        \exp(p(s_j-s_i))
        =
        \exp(ps_j)\exp(-ps_i)
        =
        \varphi_j\varphi_i^{-1}.
\]
Thus the twisting cocycle appearing in Lemma~\ref{lem:action-of-lift} is a
non-abelian Čech coboundary. Consequently
\[
        \frakS_{\widetilde{\mathfrak X}_2}
        \bigl(\frakS_{\widetilde{\mathfrak X}_1}^{-1}(\mathcal H,\theta)\bigr)
        \simeq
        (\mathcal H,\theta).
\]
Since the two $A_2$-liftings were arbitrary, $(\mathcal H,\theta)$ is
integrally lift-independent.
\end{proof}

\bibliographystyle{alpha}
\bibliography{ref}

@misc{BhattpHTnotes,
  author       = {Bhargav Bhatt},
  title        = {Aspects of $p$-adic {Hodge} theory},
  howpublished = {\url{https://www.math.ias.edu/~bhatt/teaching/mat517f25/pHT-notes.pdf}},
  year         = {Fall 2025},
}

@article{Faltings_2005, title={{A $p$-adic Simpson correspondence}}, volume={198}, ISSN={0001-8708}, url={http://dx.doi.org/10.1016/j.aim.2005.05.026}, DOI={10.1016/j.aim.2005.05.026}, number={2}, journal={Advances in Mathematics}, publisher={Elsevier BV}, author={Faltings, Gerd}, year={2005}, month=dec, pages={847–862} }

@article{MinWang2024,
  author   = {Min, Yu and Wang, Yupeng},
  title    = {{Integral $p$-adic non-abelian Hodge theory for small representations}},
  journal  = {Advances in Mathematics},
  year     = {2024},
  volume   = {458},
  part     = {A},  
  pages    = {109950},
  issn     = {0001-8708},
  doi      = {10.1016/j.aim.2024.109950},
}

@misc{stacks-project,
  author       = {The {Stacks project authors}},
  title        = {The {Stacks} project},
  howpublished = {\url{https://stacks.math.columbia.edu}},
  year         = {2026},
}

@article{heuer2023padic,
  author  = {Heuer, Ben},
  title   = {{A {$p$}-adic Simpson correspondence for smooth proper rigid varieties}},
  journal = {Inventiones mathematicae},
  year    = {2025},
  volume  = {240},
  number  = {1},
  pages   = {261--312},
  month   = apr,
  issn    = {1432-1297},
  doi     = {10.1007/s00222-025-01321-4},
  url     = {https://doi.org/10.1007/s00222-025-01321-4}
}

@article {HeuerXu2026moduli,
    AUTHOR = {Heuer, Ben and Xu, Daxin},
     TITLE = {{$p$}-adic non-abelian {H}odge theory for curves via moduli
              stacks},
   JOURNAL = {J. Amer. Math. Soc.},
  FJOURNAL = {Journal of the American Mathematical Society},
    VOLUME = {39},
      YEAR = {2026},
    NUMBER = {3},
     PAGES = {625--695},
      ISSN = {0894-0347,1088-6834},
   MRCLASS = {14F30 (14D23 14G22 14G45 14H10)},
  MRNUMBER = {5057669},
       DOI = {10.1090/jams/1065},
       URL = {https://doi.org/10.1090/jams/1065},
}

@article{Heuer_HodgeTatespectralsequence,
author = {Heuer, Ben},
title = {The relative {Hodge–Tate} spectral sequence for rigid analytic spaces},
journal = {Journal of the London Mathematical Society},
volume = {112},
number = {4},
pages = {e70318},
doi = {https://doi.org/10.1112/jlms.70318},
url = {https://londmathsoc.onlinelibrary.wiley.com/doi/abs/10.1112/jlms.70318},
eprint = {https://londmathsoc.onlinelibrary.wiley.com/doi/pdf/10.1112/jlms.70318},
year = {2025}
}

@article{anschutz2023smallpadicsimpsoncorrespondence,
  author    = {Ansch{\"u}tz, Johannes and Heuer, Ben and Le Bras, Arthur-C{\'e}sar},
  title     = {{The small $p$-adic {S}impson correspondence in terms of moduli spaces}},
  journal   = {Mathematical Research Letters},
  year      = {2024},
  note = {To appear. arXiv:2312.07554},
  eprint    = {2312.07554},
  archivePrefix = {arXiv},
  primaryClass = {math.AG},
}

@misc{HuSunZuo2025Isomonodromic,
  author        = {Hu, Tianzhi and Sun, Ruiran and Zuo, Kang},
  title         = {{Non-Abelian Kodaira-Spencer Map} and
                   non-existence of holomorphic isomonodromic
                   deformation of {Higgs} bundles over
                   {Teichm{\"u}ller} spaces},
  year          = {2026},
  eprint        = {2512.15478v3},
  archivePrefix = {arXiv},
  primaryClass  = {math.AG},
  doi           = {10.48550/arXiv.2512.15478},
  url           = {https://arxiv.org/abs/2512.15478v3},
  howpublished = {arXiv:2512.15478v3}
}

@article{JuEtAl2026Rethlas,
  title={{Automated Conjecture Resolution with Formal Verification}},
  author={Ju, Haocheng and Gao, Guoxiong and Jiang, Jiedong and Wu, Bin and Sun, Zeming and Liu, Shurui and Chen, Leheng and Wang, Yutong and Wang, Yuefeng and Wang, Zichen and He, Wanyi and others},
  journal={arXiv preprint arXiv:2604.03789},
  pages = {1--54},
  year={2026}
}

@misc{sheng2024smallpadic,
      title={{The small $p$-adic Simpson correspondence in the semi-stable reduction case}}, 
      author={Mao Sheng and Yupeng Wang},
      year={2024},
      eprint={2410.09685},
      archivePrefix={arXiv},
      primaryClass={math.AG},
      url={https://arxiv.org/abs/2410.09685},
      howpublished = {arXiv:2410.09685}
}

@book{abbes2016padic,
  title={{The $p$-adic Simpson Correspondence}},
  author={Abbes, Ahmed and Gros, Michel and Tsuji, Takeshi},
  series={Annals of Mathematics Studies},
  volume={193},
  year={2016},
  publisher={Princeton University Press},
  address={Princeton, NJ}
}

@misc{abbes2026twistinghiggsmodulesfunctorial,
      title={Twisting {Higgs} Modules and Functorial Aspects of the p-adic {Simpson} Correspondence}, 
      author={Ahmed Abbes and Michel Gros and Takeshi Tsuji},
      year={2026},
      eprint={2605.02664},
      archivePrefix={arXiv},
      primaryClass={math.AG},
      url={https://arxiv.org/abs/2605.02664}, 
      howpublished = {arXiv:2605.02664}
}

@article{wang2023padic,
  title={A $p$-adic {Simpson} correspondence for rigid analytic varieties},
  author={Wang, Yupeng},
  journal={Algebra \& Number Theory},
  volume={17},
  number={8},
  pages={1453--1499},
  year={2023},
  publisher={Mathematical Sciences Publishers},
  doi={10.2140/ant.2023.17.1453}
}

@article{Simpson1992,
  author  = {Simpson, Carlos T.},
  title   = {{Higgs bundles and local systems}},
  journal = {Publications Math{\'e}matiques de l'IH{\'E}S},
  volume  = {75},
  year    = {1992},
  pages   = {5--95},
  doi     = {10.1007/BF02699491},
  url     = {https://doi.org/10.1007/BF02699491}
}

@article{LandesmanLitt2024,
  author  = {Landesman, Aaron and Litt, Daniel},
  title   = {{Prill's problem}},
  journal = {Algebraic Geometry},
  year    = {2024},
  volume  = {11},
  number  = {2},
  pages   = {290--295},
  doi     = {10.14231/ag-2024-009},
  url     = {https://algebraicgeometry.nl/article/20170/}
}

@article{Conrad2006ModularCurves,
  author  = {Conrad, Brian},
  title   = {{Modular Curves and Rigid-Analytic Spaces}},
  journal = {Pure and Applied Mathematics Quarterly},
  volume  = {2},
  number  = {1},
  pages   = {29--110},
  year    = {2006},
  url     = {https://www.intlpress.com/site/pub/files/_fulltext/journals/pamq/2006/0002/0001/PAMQ-2006-0002-0001-a003.pdf}
}

@article{Conrad2006RelativeAmpleness,
  author  = {Conrad, Brian},
  title   = {Relative ampleness in rigid geometry},
  journal = {Annales de l'Institut Fourier},
  volume  = {56},
  number  = {4},
  pages   = {1049--1126},
  year    = {2006},
  doi     = {10.5802/aif.2207},
  url     = {https://numdam.org/item/AIF_2006__56_4_1049_0/}
}

@article{Guo2023,
    AUTHOR = {Guo, Haoyang},
     TITLE = {Hodge-{T}ate decomposition for non-smooth spaces},
   JOURNAL = {J. Eur. Math. Soc. (JEMS)},
  FJOURNAL = {Journal of the European Mathematical Society (JEMS)},
    VOLUME = {25},
      YEAR = {2023},
    NUMBER = {4},
     PAGES = {1553--1625},
      ISSN = {1435-9855,1435-9863},
   MRCLASS = {14G22 (14C30 14E15 14G20)},
  MRNUMBER = {4577971},
MRREVIEWER = {Christopher\ David\ Lazda},
       DOI = {10.4171/jems/1222},
       URL = {https://doi.org/10.4171/jems/1222},
}

@book{Hartshorne1977,
  author    = {Hartshorne, Robin},
  title     = {Algebraic Geometry},
  series    = {Graduate Texts in Mathematics},
  volume    = {52},
  publisher = {Springer-Verlag},
  address   = {New York--Heidelberg},
  year      = {1977},
  isbn      = {978-0-387-90244-9},
  doi       = {10.1007/978-1-4757-3849-0},
  url       = {https://doi.org/10.1007/978-1-4757-3849-0}
}

@incollection{Laksov1981,
  author    = {Laksov, Dan},
  title     = {{Weierstrass} points on curves},
  booktitle = {Tableaux de Young et foncteurs de Schur en alg{\`e}bre et g{\'e}om{\'e}trie -- Torun, Pologne, 1980},
  series    = {Ast{\'e}risque},
  number    = {87--88},
  pages     = {221--247},
  year      = {1981},
  publisher = {Soci{\'e}t{\'e} math{\'e}matique de France},
  mrnumber  = {646822},
  url       = {https://www.numdam.org/item/AST_1981__87-88__221_0/}
}

\end{document}